\newcommand{\bbA}{\mathbb{A}}
\newcommand{\bbN}{\mathbb{N}}
\newcommand{\bbZ}{\mathbb{Z}}
\newcommand{\calA}{\mathcal{A}}
\newcommand{\calC}{\mathcal{C}}
\newcommand{\calM}{\mathcal{M}}
\newcommand{\calN}{\mathcal{N}}
\newcommand{\calO}{\mathcal{O}}
\newcommand{\calR}{\mathcal{R}}
\newcommand{\calV}{\mathcal{V}}
\newcommand{\bone}{\mathbf{1}}
\newcommand{\bDelta}{\mathbf{\Delta}}
\newcommand{\bGamma}{\mathbf{\Gamma}}
\let\mod=\undefined
\DeclareMathOperator{\End}{End} %
\DeclareMathOperator{\Ext}{Ext} %
\DeclareMathOperator{\Hom}{Hom} %
\DeclareMathOperator{\mod}{mod} %
\DeclareMathOperator{\Gdim}{Gdim} %
\newcounter{claim}[section]
\newtheorem{corollary}[claim]{Corollary}
\newtheorem{lemma}[claim]{Lemma}
\newtheorem{proposition}[claim]{Proposition}
\newtheorem{theorem}[claim]{Theorem}
\newcounter{result}
\newtheorem{maintheorem}[result]{Theorem}
\newtheorem{maincorollary}[result]{Corollary}
\newcommand{\vertexD}[1]{\bullet \save*+!D{\scriptstyle #1} \restore}
\newcommand{\vertexU}[1]{\bullet \save*+!U{\scriptstyle #1} \restore}
\title[Gentle cluster tilted algebras]%
  {The algebras derived equivalent to gentle cluster tilted algebras}
\author{Grzegorz Bobi\'nski}
\address{Faculty of Mathematics and Computer Science \\ Nicolaus
Copernicus University \\ ul.~Chopina 12/18 \\ 87-100 Toru\'n \\
Poland}
\email{gregbob@mat.uni.torun.pl}
\author{Aslak Bakke Buan}
\address{Department of Mathematical Sciences \\ Norwegian
University of Science and Technology \\ 7491 Trondheim \\ Norway}
\email{aslakb@math.ntnu.no}
\thanks{The authors were supported by STOR-FORSK grant 196600 from
NFR. The first named author also acknowledges the support from the
Research Grant No.\ N N201 269135 of the Polish Ministry of
Science and Higher Education and the Humboldt Foundation.}
\date{}
\keywords{gentle algebras, cluster tilted algebras, derived
equivalence, Brenner--Butler tilting modules}
\subjclass[2000]{16G20, 18E30}
\begin{document}

\begin{abstract}
A cluster tilted algebra is known to be gentle if and only if it
is cluster tilted of Dynkin type $\bbA$ or Euclidean type
$\tilde{\bbA}$. We classify all finite dimensional algebras which
are derived equivalent to gentle cluster tilted algebras.
\end{abstract}

\maketitle

We consider finite dimensional algebras over an algebraically
closed field $k$. Dealing with such algebras up to Morita
equivalence, we may assume that they are given as path algebras
modulo ideals of relations. Gentle algebras form a particularly
nice subclass of special biserial algebras. This is a  well
understood and much studied class of algebras of tame
representation type. They occur in various settings related to
group algebras of finite groups, and also frequently as test
classes when dealing with general problems for finite dimensional
algebras.

The special biserial algebras can be combinatorially characterized
in terms of their quivers and relations, and so can the subclass
of gentle algebras, which we will study here. A prominent class of
examples comes from path algebras; a hereditary algebra is easily
seen to be gentle if and only if it is the path algebra of a
quiver of Dynkin type $\bbA$ or Euclidean type $\tilde{\bbA}$.

When dealing with questions of a homological nature, one is
frequently inclined to study algebras up to derived equivalence.
Two algebras are said to be derived equivalent if their derived
categories are equivalent as triangulated categories, and this
happens if and only if one can get from one algebra to the other
by taking the endomorphism ring of a so called tilting complex. A
special case of tilting complexes are tilting modules.

By~\cite{SZ}, the class of gentle algebras is closed under derived
equivalence. Hence the characterization of hereditary gentle
algebras implies also a characterization of gentle algebras
derived equivalent to hereditary algebras.

Recently, a class of algebras with representation theory very
similar to that of hereditary algebras has been much studied;
these are the cluster tilted algebras. Such an algebra is defined
to be the endomorphism ring $\End_{\calC_{k Q}} (T)$ of a tilting
object $T$ in the cluster category $\calC_{k Q}$ of a quiver $Q$
and is then said to be cluster tilted of type $Q$.

Cluster tilted algebras of type $\bbA$ were classified
in~\cite{BV}, while cluster tilted algebras of type $\tilde{\bbA}$
were classified in~\cite{Ba}. They are in both cases gentle, and
moreover, Assem et.\ al~\cite{ABChJP} have shown that no other
cluster tilted algebras are gentle. Furthermore, in both cases,
also a classification of the derived equivalence classes were
given.

The main aim of this paper is to give a complete classification of
all finite dimensional algebras which are derived equivalent to
gentle cluster tilted algebras. For this we use work of
Avella-Alaminos and Geiss~\cite{AAG}. They showed that one can
assign to each gentle algebra $\Lambda$ a function $f_{\Lambda}
\colon \bbN^2 \to \bbN$, and that this function is invariant under
derived equivalence. The function can be algorithmically computed
from the quiver and relations of the algebra. Our classification
is described in terms of this function. More precisely, we give
necessary and sufficient conditions on $f_\Lambda$ for $\Lambda$
to be derived equivalent to a cluster tilted algebra of type
$\bbA$, and similarly we give conditions for type $\tilde{\bbA}$.

We also point out that in the case of algebras which are derived
equivalent to gentle cluster tilted algebras, we have that
$f_\Lambda$ uniquely determines the derived equivalence class of
$\Lambda$.

Another main tool is a combinatorial description of
Brenner--Butler (co)tilting~\cite{BB} (shortly, BB-(co)tilting).
In fact, as a consequence of the proof of our main result, it
follows that any derived equivalence between two gentle algebras
derived equivalent to cluster tilted algebras can be obtained by
repeated BB-tilting or BB-cotilting.

It is known that the cluster algebras are Gorenstein algebras of
Gorenstein dimension $1$. We show that this property characterizes
the gentle cluster algebras among the algebras derived equivalent
to gentle cluster algebras.

The paper is organized as follows. In Section~\ref{section_gentle}
we collect facts about gentle algebras, while in
Section~\ref{section_tilting} we present basics about derived
equivalence and define Brenner--Butler (co)tilting modules. Next,
in Section~\ref{section_AAG} we define the invariant of
Avella--Alaminos and Geiss and in Section~\ref{section_completion}
we introduce a combinatorial construction used in the proofs.
Section~\ref{section_cluster} is devoted to a presentation of
known facts about gentle cluster tilted algebras. We also
formulate the main results of the paper there. Finally, the last
four sections contain the proofs of the main results.

We refer to~\cites{ARS, ASS} for general notions, and to~\cite{H}
for derived categories.

\section{Gentle algebras} \label{section_gentle}

Throughout the paper $k$ is a fixed algebraically closed field. By
$\bbZ$, $\bbN$ and $\bbN_+$ we denote the sets of the integers,
the non-negative integers and the positive integers, respectively.
Finally, if $i, j \in \bbZ$, then $[i, j] := \{ k \in \bbZ \mid i
\leq k \leq j \}$ (in particular, $[i, j] = \varnothing$ if $i >
j$).

By a quiver $\Delta$ we mean a (non-empty) finite set $\Delta_0$
of vertices and a finite set $\Delta_1$ of arrows together with
two maps $s = s_\Delta, t = t_\Delta : \Delta_1 \to \Delta_0$
which assign to $\alpha \in \Delta_1$ the starting vertex $s
\alpha$ and the terminating vertex $t \alpha$, respectively. A
vertex $x$ of a quiver $\Delta$ is said to be adjacent to an arrow
$\alpha$ if $x \in \{ s \alpha, t \alpha \}$. Similarly, arrows
$\alpha$ and $\beta$ are said to be adjacent if $\{ s \alpha, t
\alpha \} \cap \{ s \beta, t \beta \} \neq \varnothing$. A quiver
$\Delta$ is called connected if for all $x, y \in \Delta_0$, $x
\neq y$, there exists a sequence $(\alpha_1, \ldots, \alpha_n)$ of
arrows such that $x$ is adjacent to $\alpha_1$, $\alpha_i$ is
adjacent to $\alpha_{i + 1}$ for each $i \in [1, n - 1]$, and $y$
is adjacent to $\alpha_n$. If $\Delta$ is a quiver and $\Delta_1'
\subset \Delta_1$, then by the subquiver of $\Delta$ generated by
$\Delta_1'$ we mean the quiver $(\{ s \alpha, t \alpha \mid \alpha
\in \Delta_1' \}, \Delta_1' )$.

Fix a quiver $\Delta$. If $n \in \bbN_+$, then by a path in
$\Delta$ of length $n$ we mean a sequence $\omega = (\alpha_1,
\ldots, \alpha_n)$ such that $\alpha_i \in \Delta_1$ for each $i
\in [1, n]$ and $s \alpha_i = t \alpha_{i + 1}$ for each $i \in
[1, n - 1]$. In the above situation we put $s \omega := s
\alpha_n$ and $t \omega := t \alpha_1$. Moreover, for each $x \in
\Delta_0$ we introduce the trivial path $\bone_x$ at $x$ of length
$0$ such that $s \bone_x := x =: t \bone_x$. For a path $\omega$
we denote by $\ell (\omega)$ its length. If $\omega'$ and
$\omega''$ are paths in $\Delta$ of lengths $n'$ and $n''$,
respectively, such that $s \omega' = t \omega''$, then we define
the composition $\omega' \cdot \omega''$ of $\omega'$ and
$\omega''$, which is a path in $\Delta$ of length $n' + n''$, in
the obvious way (in particular, $\omega \cdot \bone_{s \omega} =
\omega = \bone_{t \omega} \cdot \omega$ for each path $\omega$).
We say that a path $\omega_0$ is a subpath of a path $\omega$ if
there exist paths $\omega'$ and $\omega''$ such that $\omega =
\omega' \cdot \omega_0 \cdot \omega''$.

By a quiver with (monomial) relations we mean a pair $\bDelta =
(\Delta, R)$ consisting of a quiver $\Delta$ and a set $R$ of
paths in $\Delta$. Given a quiver with relations $\bDelta$ we
define the algebra $k \bDelta$ in the following way. As a vector
space $k \bDelta$ has a basis formed by the paths in $\Delta$
which do not have a subpath from $R$. If $\omega'$ and $\omega''$
are two such paths, then their product is either $\omega' \cdot
\omega''$ provided $s \omega' = t \omega''$ and $\omega' \cdot
\omega''$ does not have a subpath from $R$, or $0$ elsewhere. If
$R = \varnothing$, then one writes $k \Delta$ instead of $k
\bDelta$ and we call $k \Delta$ the path algebra of $\Delta$. By
abuse of terminology we will also call $k \bDelta$ the path
algebra of $\bDelta$.

By a gentle quiver we mean a quiver with relations $\bDelta$ such
that $\Delta$ is connected, $R$ consists of paths of length $2$,
and the following conditions are satisfied:
\begin{enumerate}

\item
for each vertex $x$ there are at most two arrows $\alpha$ such
that $s \alpha = x$ and at most two arrows $\beta$ such that $t
\beta = x$,

\item
for each arrow $\alpha$ there is at most one arrow $\beta$ such
that $s \beta = t \alpha$ and $(\beta, \alpha) \not \in R$ and at
most one arrow $\gamma$ such that $t \gamma = s \alpha$ and
$(\alpha, \gamma) \not \in R$,

\item
for each arrow $\alpha$ there is at most one arrow $\beta$ such
that $(\beta, \alpha) \in R$ (in particular, $s \beta = t \alpha$)
and at most one arrow $\gamma$ such that $(\alpha, \gamma) \in R$
(in particular, $t \gamma = s \alpha$),

\item
there exists $n \in \bbN$ such that every path $\omega$ in
$\Delta$ of length $n$ has a subpath from $R$ (i.e., $\dim_k k
\bDelta < \infty$).

\end{enumerate}
In other words, conditions~(1)--(3) mean that the most complicated
situation which can appear in the neighborhood of a given vertex
$x$ is the following
\[
\vcenter{\xymatrix{& & \ar[ld]_(0.25){}="b" \\ & \vertexU{x}
\ar[lu]_(0.75){}="a" \ar[ld]^(0.75){}="c" \\ & &
\ar[lu]^(0.25){}="d" \ar@{.}@/^/"b";"a" \ar@{.}@/_/"d";"c"}},
\]
where the dotted lines denote relations. An algebra $\Lambda$ is
called gentle if and only if there exists a gentle quiver
$\bDelta$ such that $\Lambda$ is isomorphic to $k \bDelta$.

\section{Derived equivalences and Brenner--Butler tilting modules}
\label{section_tilting}

For a finite dimensional algebra $\Lambda$ denote by $D^b
(\Lambda)$ the bounded derived category of the category of finite
dimensional right $\Lambda$-modules. Then $D^b (\Lambda)$ has a
structure of a triangulated category with the suspension functor
$\Sigma$ given by the shift of complexes. We say that finite
dimensional algebras $\Lambda$ and $\Lambda'$ are derived
equivalent if $D^b (\Lambda)$ and $D^b (\Lambda')$ are derived
equivalent as triangulated categories. Rickard~\cite{Ric} has
showed that this happens if and only if there exists a tilting
complex $T$ in $D^b (\Lambda)$ such that $\Lambda'$ is isomorphic
to $\End_{D^b (\Lambda)} (T)$. Recall that if $\Lambda$ is a
finite dimensional algebra, then a complex $T$ in $D^b (\Lambda)$
is called tilting if $\Hom_{D^b (\Lambda)} (T, \Sigma^i T) = 0$
for all $i \in \bbZ$, $i \neq 0$, and $T$ generates (as a
triangulated category) the full subcategory of $D^b (\Lambda)$
formed by the perfect complexes, where a complex is called perfect
if it is quasi-isomorphic to a bounded complex of projective
modules. A module is called tilting if it is a tilting complex,
when viewed as a complex concentrated in degree $0$. In the paper,
we consider a special class of tilting modules, co called
Brenner-Butler (co)tilting modules~\cite{BB}*{Chapter~2}. We
describe them for gentle algebras, but their definition
generalizes to arbitrary finite dimensional algebras.

Let $\bDelta$ be a gentle quiver without loops (i.e., there are no
arrows $\alpha$ in $\Delta$ such that $s \alpha = t \alpha$) and
$\Lambda := k \bDelta$. Let $x$ be a vertex in $\Delta$ such that
for each $\alpha \in \Delta_1$ with $s \alpha = x$ there exists
(necessarily unique) $\beta_\alpha \in \Delta_1$ with $t
\beta_\alpha = x$ and $(\alpha, \beta_\alpha) \not \in R$. Observe
that this condition is satisfied if there are no arrows starting
at $x$ or there are two arrows terminating at $x$. We define a
quiver with relations $\bDelta' = (\Delta', R')$, which we call
the quiver with relations obtained from $\bDelta$ by applying the
reflection at $x$, in the following way: $\Delta_0' = \Delta_0$,
$\Delta_1' = \Delta_1$,
\begin{align*}
s_{\Delta'} \alpha & :=
\begin{cases}
x & t_\Delta \alpha = x,
\\
s_\Delta \beta_\alpha & s_\Delta \alpha = x,
\\
s_\Delta \alpha & \text{otherwise},
\end{cases}
\\
t_{\Delta'} \alpha & :=
\begin{cases}
s_\Delta \alpha & t_\Delta \alpha = x,
\\
x & \exists \; \beta \in \Delta_1 : t_\Delta \beta = x \wedge
s_\Delta \beta = t_\Delta \alpha \wedge (\beta, \alpha) \in R,
\\
t_\Delta \alpha & \text{otherwise},
\end{cases}
\end{align*}
and
\begin{multline*}
R' := \{ (\alpha, \beta) \in R \mid t_\Delta \alpha \neq x \wedge
s_\Delta \alpha \neq x \} \cup \{ (\alpha, \beta_\alpha) \mid
s_\Delta \alpha = x\} \cup
\\
\{ (\alpha, \beta) \mid t_\Delta \alpha = x \wedge  \exists \;
\gamma \in \Delta_1 :
\\
\gamma \neq \alpha \wedge t_\Delta \gamma = x \wedge s_\Delta
\gamma = t_\Delta \beta \wedge (\gamma, \beta) \in R \}.
\end{multline*}
For example, if $\Delta$ is the following quiver
\[
\xymatrix{\vertexD{v} & & \vertexD{y} \ar[ld]_\beta & &
\vertexD{z'} \ar[lldd]_(.25){\gamma'}|\hole \\ & \vertexU{x}
\ar[lu]_\alpha \ar[ld]_{\alpha'} \\ \vertexU{v'} & & \vertexU{y'}
\ar[lu]_{\beta'} & & \vertexU{z} \ar[lluu]_(0.25)\gamma}
\]
and $R = \{ (\alpha, \beta), (\beta, \gamma), (\alpha', \beta'),
(\beta', \gamma') \}$, then $\Delta'$ is the following quiver
\[
\xymatrix{\vertexD{v} & & \vertexD{y}
\ar[lldd]_(0.25){\alpha'}|\hole & & \vertexD{z'} \ar[ld]_{\gamma'} \\
& & & \vertexU{x} \ar[lu]_\beta \ar[ld]^{\beta'} \\ \vertexU{v'} &
& \vertexU{y'} \ar[lluu]^(0.25)\alpha & & \vertexU{z}
\ar[lu]_\gamma}
\]
and $R' = \{ (\alpha, \beta'), (\beta', \gamma), (\alpha', \beta),
(\beta, \gamma') \}$ (in fact, the above example indicates all
possible changes which can appear). Then $k \bDelta' \simeq
\End_\Lambda (T)$, where
\[
T := \tau^{-1} S_x \oplus \bigoplus_{\substack{y \in \Delta_0 \\ y
\neq x}} \Lambda \cdot \bone_y,
\]
$S_x$ is the quotient of $\Lambda \cdot \bone_x$ modulo its unique
maximal submodule, and $\tau^{-1}$ is the quasi-inverse of the
Auslander--Reiten translation. Moreover, $T$ is a tilting module,
which we call the Brenner--Butler tilting (shortly, BB-tilting)
module at $x$.

We define coreflections and BB-cotilting modules (which are
tilting modules in the sense of our definition) dually. If
$\Lambda$ and $\Lambda'$ are gentle algebras, then we say that
$\Lambda$ and $\Lambda'$ are BB-equivalent if and only if there
exists a sequence $(\Lambda_0, \ldots, \Lambda_n)$, $n \in \bbN$,
of algebras such that $\Lambda_0 = \Lambda$, $\Lambda_n =
\Lambda'$, and for each $i \in [1, n]$ there exists a
BB-(co)tilting $\Lambda_{i - 1}$-module $T$ with $\End_{\Lambda_{i
- 1}} (T) \simeq \Lambda_i$. Obviously, if $\Lambda$ and
$\Lambda'$ are BB-equivalent, then $\Lambda$ and $\Lambda'$ are
derived equivalent. We will show that for the class of algebras we
consider these two notions coincide.

\section{The invariant of Avella-Alaminos and Geiss}
\label{section_AAG}

If $\bDelta$ is a gentle quiver, then there exist functions
$\sigma, \tau : \Delta_1 \to \{ \pm 1 \}$ such that the following
conditions are satisfied:
\begin{enumerate}

\item
if $\alpha, \beta \in \Delta_1$, $s \alpha = s \beta$ and $\alpha
\neq \beta$, then $\sigma \alpha = - \sigma \beta$,

\item
if $\alpha, \beta \in \Delta_1$, $t \alpha = t \beta$ and $\alpha
\neq \beta$, then $\tau \alpha = - \tau \beta$,

\item
if $\alpha, \beta \in \Delta_1$ and $s \alpha = t \beta$, then
$(\alpha, \beta) \in R$ if and only if $\sigma \alpha = \tau
\beta$.

\end{enumerate}
The functions $\sigma$ and $\tau$ are not uniquely determined by
$\bDelta$. From now on we always assume that given a gentle quiver
we are also given functions $\sigma$ and $\tau$ as above. If
$\bDelta$ is a gentle quiver and $\omega = (\alpha_1, \dots,
\alpha_n)$ is a path in $\Delta$ of positive length, then we put
$\sigma \omega := \sigma \alpha_n$ and $\tau \omega := \tau
\alpha_1$.

Now we fix a gentle quiver $\bDelta$. Following Avella-Alaminos
and Geiss \cite{AAG} we will define a function $f_\bDelta : \bbN^2
\to \bbN$, which will be also denoted $f_\Lambda$ provided
$\Lambda$ is (isomorphic to) the path algebra of $\bDelta$.

By a path in $\bDelta$ of positive length we mean a path
$(\alpha_1, \ldots, \alpha_n)$ in $\Delta$ of positive length such
that $(\alpha_i, \alpha_{i + 1}) \not \in R$ (equivalently,
$\sigma \alpha_i = - \tau \alpha_{i + 1}$) for each $i \in [1, n -
1]$. Moreover, for each $x \in \Delta_0$ we introduce two paths
$\bone_{x, 1}$ and $\bone_{x, -1}$ of length $0$ such that $s
\bone_{x, \varepsilon} := x =: t \bone_{x, \varepsilon}$, $\sigma
\bone_{x, \varepsilon} := \varepsilon$ and $\tau \bone_{x,
\varepsilon} := - \varepsilon$. A path $\omega$ in $\bDelta$ is
called maximal if there is no $\alpha \in \Delta_1$ such that $s
\alpha = t \omega$ and $\sigma \alpha = - \tau \omega$, and there
is no $\beta \in \Delta_1$ such that $t \beta = s \omega$ and
$\tau \beta = - \sigma \omega$ (such objects were called permitted
threads in~\cite{AAG}). By $\calM = \calM_{\bDelta}$ we denote the
set of all maximal paths in $\bDelta$.

By an antipath in $\bDelta$ of positive length we mean a path
$(\alpha_1, \ldots, \alpha_n)$ of positive length in $\Delta$ such
that $(\alpha_i, \alpha_{i + 1}) \in R$ (equivalently, $\sigma
\alpha_i = \tau \alpha_{i + 1}$) for each $i \in [1, n - 1]$.
Moreover, for each $x \in \Delta_0$ we introduce two antipaths
$\bone_{x, 1}'$ and $\bone_{x, -1}'$ of length $0$ such that $s
\bone_{x, \varepsilon}' := x =: t \bone_{x, \varepsilon}'$ and
$\sigma \bone_{x, \varepsilon}' := \varepsilon =: \tau \bone_{x,
\varepsilon}'$. An antipath $\omega$ is called maximal if there is
no $\alpha \in \Delta_1$ such that $s \alpha = t \omega$ and
$\sigma \alpha = \tau \omega$ and there is no $\beta \in \Delta_1$
such that $t \beta = s \omega$ and $\tau \beta = \sigma \omega$
(these objects correspond to forbidden threads in the terminology
of~\cite{AAG}). By $\calN = \calN_{\bDelta}$ we denote the set of
all maximal antipaths in $\bDelta$.

If $\omega \in \calM$, then there exists unique $\omega' \in
\calN$ such that $t \omega' = t \omega$ and $\tau \omega' = - \tau
\omega$. Moreover, the function $\phi_{\bDelta} : \calM \to \calN$
obtained in this way is a bijection. Similarly, we obtain a
bijection $\psi_{\bDelta} : \calN \to \calM$ by associating with
$\omega \in \calN$ the unique $\omega' \in \calM$ such that $s
\omega' = s \omega$ and $\sigma \omega' = - \sigma \omega$.
Finally, we define a bijection $\Phi_{\bDelta} : \calN \to \calN$
by $\Phi_{\bDelta} := \phi_{\bDelta} \circ \psi_{\bDelta}$. This
bijection induces an action of $\bbZ$ on $\calN$ and we denote by
$\calN / \bbZ$ the set of the orbits with respect to this action.
If $\calO \in \calN / \bbZ$, then we put $p (\calO) := |\calO|$
and $q (\calO) := \sum_{\omega \in \calO} \ell (\omega)$.

Let $\calC = \calC_{\bDelta}$ be the set of arrows $\alpha \in
\Delta_1$ such that $(\alpha)$ is not a subpath of a maximal
antipath in $\bDelta$. For each $\alpha \in \calC$ there exists
unique $\beta \in \calC$ such that $t \beta =  s \alpha$ and $\tau
\beta = \sigma \alpha$. In this way we obtain a bijection
$\Psi_{\bDelta} : \calC \to \calC$, which induces an action of
$\bbZ$ on $\calC$. If $\calO \in \calC / \bbZ$, then we put $p
(\calO) := 0$ and $q (\calO) := |\calO|$. In other words, if
$\calO = \bbZ \cdot \alpha$, then $q (\calO)$ is the minimal $q
\in \bbN_+$ such that there exists an antipath $(\alpha_0, \ldots,
\alpha_q)$ with $\alpha_0 = \alpha = \alpha_q$ (note that
$\alpha_i \neq \alpha_j$ for all $i, j \in [0, q - 1]$, $i \neq
j$, but it may happen that $s \alpha_i = s \alpha_j$ for some $i,
j \in [0, q - 1]$, $i \neq j$).

For $p, q \in \bbN$ we denote by $f_{\bDelta} (p, q)$ the number
of $\calO \in \calN / \bbZ \cup \calC / \bbZ$ such that $p (\calO)
= p$ and $q (\calO) = q$. Observe that $f_{\bDelta} (0, 3)$ counts
the number of the cycles
\[
\xymatrix{& \vertexD{x_2} \ar[rd]^{\alpha_1} \\ \vertexU{x_0}
\ar[ru]^{\alpha_2} & & \vertexU{x_1} \ar[ll]_{\alpha_0}}
\]
such that $\alpha_0 \neq \alpha_1 \neq \alpha_2 \neq \alpha_0$ and
$(\alpha_0, \alpha_1), (\alpha_1, \alpha_2), (\alpha_2, \alpha_0)
\in R$. We will call such configurations (more precisely, the
orbits in $\calC / \bbZ$ consisting of 3 arrows) triangles.
Observe that $x_0 \neq x_1 \neq x_2 \neq x_0$ in the above
situation, since otherwise we would have paths of arbitrary length
in $\bDelta$.

The following is the main result of~\cite{AAG}.

\begin{theorem} \label{theorem_AAG}
Let $\Lambda$ and $\Lambda'$ be gentle algebras. If $\Lambda$ and
$\Lambda'$ are derived equivalent, then $f_\Lambda =
f_{\Lambda'}$.
\end{theorem}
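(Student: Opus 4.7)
The plan is to realise $f_\bDelta$ as a quantity that can be read off intrinsically from the triangulated category $D^b(\Lambda)$, from which derived invariance will follow automatically. By Rickard's theorem one may assume $\Lambda' \simeq \End_{D^b(\Lambda)}(T)$ for a tilting complex $T$, and by the cited closure result of~\cite{SZ} a gentle presentation $\bDelta'$ of $\Lambda'$ exists, so both sides of the equality $f_\bDelta = f_{\bDelta'}$ make sense to begin with.

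First, I would import the Bekkert--Merklen classification of the indecomposable objects of $D^b(\Lambda)$ for gentle $\Lambda$ as \emph{homotopy string} and \emph{homotopy band} complexes, together with the explicit description of the shape of its Auslander--Reiten quiver in terms of walks in $\bDelta$. Second, I would match the combinatorial data of Section~\ref{section_AAG} with pieces of this structure: every maximal path $\omega \in \calM$ should correspond to a boundary walk of a $\bbZ A_\infty^\infty$-component of $D^b(\Lambda)$, and every maximal antipath $\omega' \in \calN$ to the opposite boundary of the same component, with $\phi_\bDelta$ realising the passage between them; meanwhile each orbit of $\Psi_\bDelta$ on $\calC$ should index a homogeneous tube. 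Third, I would show that the pair $(p(\calO), q(\calO))$ captures an intrinsic invariant of the corresponding AR-component: $p(\calO)$ is the $\Sigma$-twisted AR-period of a natural slice, and $q(\calO)$ is the total shift of $\Sigma$ needed to close up one period (with $p = 0$ and $q = |\calO|$ for a tube of rank $|\calO|$). All of these numbers are preserved under triangulated equivalence, which delivers $f_\bDelta = f_{\bDelta'}$.

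The main obstacle is the second step, namely pinning down the precise dictionary between the formal bijections $\phi_\bDelta, \psi_\bDelta$ and the AR-translation composed with an appropriate power of $\Sigma$ on each component. The sign data $\sigma, \tau$ encodes the two possible sides at each vertex, and one must verify that the choice hard-wired into $\phi_\bDelta$ (opposite endpoint, opposite value of $\tau$) is exactly the choice made by the derived category. A less conceptual alternative would be to reduce an arbitrary derived equivalence between gentle algebras to a finite composition of the BB-(co)tilts from Section~\ref{section_tilting} and then verify invariance under a single reflection by the kind of case analysis exemplified by the explicit reflection displayed before the theorem; but this route presupposes the non-trivial statement that BB-tilts generate all derived equivalences between gentle algebras, which is at least as deep as the theorem itself and is only established for the subclass treated in the present paper.
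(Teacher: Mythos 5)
First, a structural point: the paper does not prove this statement at all --- it is quoted as the main result of \cite{AAG} and used as a black box --- so there is no internal proof to compare against, and any honest proof must reproduce the work of Avella-Alaminos and Geiss. Your outline does point in the direction of their actual argument (realising the orbit data of Section~\ref{section_AAG} inside the Auslander--Reiten structure of $K^b(\operatorname{proj}\Lambda)$ via the string/band description of indecomposables), but as written it is a programme rather than a proof. The two steps that carry all of the content --- the dictionary between $\phi_{\bDelta},\psi_{\bDelta}$ and the AR-translation composed with a power of $\Sigma$ on each component, and the intrinsic, equivalence-invariant characterisation of the pair $(p(\calO),q(\calO))$ --- are precisely the ones you defer, and you say so yourself. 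Deferring them is deferring the theorem.

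Moreover, at least one of the asserted identifications is wrong as stated: the orbits of $\Psi_{\bDelta}$ on $\calC_{\bDelta}$ cannot ``index the homogeneous tubes''. Homogeneous tubes in $D^b(\Lambda)$ arise from band complexes and occur in one-parameter families whenever $\Delta$ admits a band; for the Kronecker quiver (gentle of type $\tilde{\bbA}$, no relations) one has $\calC_{\bDelta}=\varnothing$ while the derived category contains a $\mathbb{P}^1$-family of homogeneous tubes. Whatever derived-categorical structure the $(0,q)$-orbits actually detect, it is not that, and getting this identification right is part of the substance of \cite{AAG}. Finally, you are right to reject your alternative route via BB-(co)tilts: in the context of this paper it would be circular, since Theorem~\ref{theorem_AAG} is an input to the proof that BB-equivalence exhausts derived equivalence even on the restricted class of algebras treated here.
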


It is worth to observe the following.

\begin{lemma}
Let $\bDelta$ be a gentle quiver. Then
\[
\sum_{\calO \in \calN_{\bDelta} / \bbZ \cup \calC_{\bDelta} /
\bbZ} p (\calO) = 2 |\Delta_0| - |\Delta_1| \quad \text{and} \quad
\sum_{\calO \in \calN_{\bDelta} / \bbZ \cup \calC_{\bDelta} /
\bbZ} q (\calO) = |\Delta_1|.
\]
\end{lemma}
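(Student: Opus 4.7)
The plan is to prove both identities by local slot-counting at each vertex, combined with the uniqueness of antipath extensions coming from the gentle conditions. First I would set up the bookkeeping: call a pair $(x, \varepsilon) \in \Delta_0 \times \{\pm 1\}$ a \emph{source-slot} and say it is \emph{occupied} if some $\alpha \in \Delta_1$ satisfies $s\alpha = x$ and $\sigma\alpha = \varepsilon$; \emph{target-slots} are defined dually through $t\alpha$ and $\tau\alpha$. The defining property of $\sigma$ ensures that the (at most two) outgoing arrows at $x$ have opposite $\sigma$, so exactly $2 - d^+(x)$ source-slots at $x$ are vacant, and summing over vertices gives $2|\Delta_0| - |\Delta_1|$ vacant source-slots in total.

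For the $q$-sum, gentle condition~(3) gives each arrow $\alpha$ a unique forward and a unique backward extension along the antipath relation. Tracing these extensions either terminates in both directions (producing a unique finite maximal antipath containing $\alpha$, so $\alpha \notin \calC$) or cycles back onto itself (in which case every arrow of the cycle lies in $\calC$ and forms a single orbit under $\Psi_{\bDelta}$). This partitions $\Delta_1$ into positions inside maximal antipaths of positive length together with arrows of $\calC$, which yields $\sum_{\calO} q(\calO) = \sum_{\omega \in \calN} \ell(\omega) + |\calC| = |\Delta_1|$.

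For the $p$-sum, the target is to exhibit a bijection from $\calN$ to the set of vacant source-slots, sending $\omega$ to $(t\omega, \tau\omega)$; this will give $\sum_{\calO} p(\calO) = |\calN| = 2|\Delta_0| - |\Delta_1|$ directly. Injectivity is routine: two positive-length antipaths with the same target-germ have the same first arrow (since there is at most one incoming arrow at $x$ with $\tau = \varepsilon$) and then coincide by uniqueness of the backward extension; a length-zero $\bone_{x,\varepsilon}'$ is distinguished from the positive-length case because its maximality requires the target-slot at $(x,\varepsilon)$ to itself be vacant. For surjectivity, given a vacant source-slot $(x,\varepsilon)$: if the corresponding target-slot is also vacant, then $\bone_{x,\varepsilon}'$ is a maximal antipath by definition; otherwise the target-slot is occupied by the unique incoming $\alpha_1$ with $\tau \alpha_1 = \varepsilon$, and I would extend $\alpha_1$ backward along the antipath relation.

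The main obstacle is ensuring this backward extension terminates rather than cycles. But the vacancy of the source-slot at $(x,\varepsilon)$ is exactly what rules out a cycle: if the extension returned to $\alpha_1$ via $(\alpha_1, \ldots, \alpha_{k-1}, \alpha_1)$, then $(\alpha_{k-1}, \alpha_1) \in R$ would force $s \alpha_{k-1} = t \alpha_1 = x$ and $\sigma \alpha_{k-1} = \tau \alpha_1 = \varepsilon$, so $\alpha_{k-1}$ would occupy the source-slot, a contradiction. Hence the extension terminates, producing a genuine maximal antipath with target-germ $(x,\varepsilon)$, and the bijection is complete.
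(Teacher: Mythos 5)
Your proof is correct and follows essentially the same route as the paper's: both identities reduce to a bijective count over $\Delta_0 \times \{ \pm 1 \}$, the paper realizing $2 |\Delta_0| = |\calM_{\bDelta}| + |\Delta_1|$ via maximal paths together with the bijection $\calM_{\bDelta} \cong \calN_{\bDelta}$, while you biject $\calN_{\bDelta}$ directly onto the vacant source-slots. The extra details you supply (termination of the backward extension, the treatment of length-zero antipaths) are correct and merely make explicit what the paper dismisses as easily observed.
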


\begin{proof}
The latter observation is obvious, for the proof of the former we
first observe that
\[
\sum_{\calO \in \calN_{\bDelta} / \bbZ \cup \calC_{\bDelta} /
\bbZ} p (\calO) = \sum_{\calO \in \calN_{\bDelta} / \bbZ} p
(\calO) = |\calN_{\bDelta}| = |\calM_{\bDelta}|.
\]
Next, if $(x, \varepsilon) \in \Delta_0 \times \{ \pm 1 \}$, then
either there exists $\omega \in \calM_{\bDelta}$ such that $s
\omega = x$ and $\sigma \omega = \varepsilon$ or there exists
$\alpha \in \Delta_1$ such that $t \alpha = x$ and $\tau \alpha =
- \varepsilon$. One easily observes that in this way we may define
a bijection between $\Delta_0 \times \{ \pm 1 \}$ and
$\calM_{\bDelta} \cup \Delta_1$, which finishes the proof.
\end{proof}

Now we characterize, in terms of the above invariant, classes of
gentle quivers, which will play an important role in our
considerations. For $p, q \in \bbN$ we denote by $[p, q]$ the
characteristic function of the subset $\{ (p, q) \}$ of $\bbN^2$.

A gentle quiver $\bDelta$ is said to be of tree type if
$|\Delta_0| = |\Delta_1| + 1$. Recall~\cite{AH} that the gentle
quivers of tree type are precisely the gentle quivers whose path
algebras are derived equivalent to the path algebras of Dynkin
quivers of type $\bbA$.

\begin{lemma} \label{lemma_gentle_A}
If $\bDelta$ is a gentle quiver of tree type, then
\[
f_{\bDelta} = [|\Delta_0| + 1, |\Delta_0| - 1|].
\]
\end{lemma}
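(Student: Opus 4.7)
The plan is to combine the two identities from the preceding lemma with a reduction showing that, for a tree, the set $\calC_{\bDelta}$ is empty and $\Phi_{\bDelta}$ acts transitively on $\calN_{\bDelta}$.

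First I would observe that $\calC_{\bDelta} = \varnothing$. Indeed, an orbit $\bbZ \cdot \alpha$ in $\calC_{\bDelta}/\bbZ$ is witnessed by a closed antipath $(\alpha_0, \ldots, \alpha_q)$ with $\alpha_0 = \alpha_q$ and $\alpha_0, \ldots, \alpha_{q-1}$ pairwise distinct, whose source sequence $s \alpha_0, s \alpha_1, \ldots, s \alpha_{q-1}, s \alpha_0$ would be a closed walk in the underlying graph using pairwise distinct arrows; since the underlying graph is a tree, no such walk exists. Hence every contribution to $f_{\bDelta}$ comes from an orbit in $\calN_{\bDelta} / \bbZ$, and by the preceding lemma
\[
\sum_{\calO \in \calN_{\bDelta}/\bbZ} p(\calO) = 2|\Delta_0| - |\Delta_1| = |\Delta_0|+1, \qquad \sum_{\calO \in \calN_{\bDelta}/\bbZ} q(\calO) = |\Delta_1| = |\Delta_0|-1.
\]
Consequently, the assertion $f_{\bDelta} = [|\Delta_0|+1, |\Delta_0|-1]$ reduces to the statement that $\Phi_{\bDelta}$ has a single orbit on $\calN_{\bDelta}$.

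I would establish transitivity by induction on $|\Delta_0|$. The case $|\Delta_0|=1$ is immediate. For the inductive step, pick a leaf $x$ of the underlying tree and let $\alpha$ be the unique arrow incident to $x$. Let $\bDelta'$ be the gentle quiver obtained from $\bDelta$ by deleting $x$, deleting $\alpha$, and deleting any relation in $R$ involving $\alpha$ (restricting $\sigma, \tau$ from $\bDelta$ makes $\bDelta'$ a gentle quiver). Since $\bDelta'$ is again a connected quiver of tree type with $|\Delta_0'| = |\Delta_0|-1$, the induction hypothesis yields $|\calN_{\bDelta'}/\bbZ| = 1$. I would then compare $\calN_{\bDelta}$ with $\calN_{\bDelta'}$: removing $\alpha$ affects at most the two maximal antipaths of $\bDelta'$ meeting $s \alpha$ or $t \alpha$, and at most the trivial maximal antipaths based at these two vertices; reinstating $\alpha$ splices these threads together in a local way dictated by the definitions of $\phi_{\bDelta}$ and $\psi_{\bDelta}$. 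A direct check shows that in every local configuration the unique orbit of $\Phi_{\bDelta'}$ is replaced by a unique orbit of $\Phi_{\bDelta}$, because the antipaths newly created or modified at $x$ fit into the same cycle as the antipaths inherited from $\bDelta'$.

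The main obstacle is the bookkeeping in the inductive step: one has to enumerate the several cases (whether $\alpha$ starts or ends at $x$; whether its other endpoint has one or two further incident arrows; whether $\alpha$ participates in a relation there) and verify in each case that the maximal antipaths of $\bDelta$ are obtained from those of $\bDelta'$ by inserting $\alpha$ into one thread, or by merging two threads through $\alpha$ and a trivial thread at $x$, and that under this surgery the cyclic action $\Phi_{\bDelta'}$ lifts to a single cyclic action $\Phi_{\bDelta}$. Once this local analysis is carried out, the numerical identities above immediately force $p(\calO) = |\Delta_0|+1$ and $q(\calO) = |\Delta_0|-1$ for the unique orbit $\calO \in \calN_{\bDelta}/\bbZ$, which is the claim.
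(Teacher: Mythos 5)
Your reduction is correct and cleanly executed: $\calC_{\bDelta}=\varnothing$ because an orbit in $\calC_{\bDelta}/\bbZ$ would produce a closed walk with pairwise distinct edges in the underlying tree; the two identities from the preceding lemma then pin down $\sum p(\calO)=|\Delta_0|+1$ and $\sum q(\calO)=|\Delta_0|-1$; and the lemma is thereby equivalent to $\Phi_{\bDelta}$ acting transitively on $\calN_{\bDelta}$. Note that the paper does not prove any of this itself --- it simply cites \cite{AAG}*{Section~7} --- so any self-contained argument is necessarily a different route.

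The gap is that the transitivity claim, which is the entire remaining content of the lemma, is asserted rather than proved. Your inductive step says ``a direct check shows that in every local configuration the unique orbit of $\Phi_{\bDelta'}$ is replaced by a unique orbit of $\Phi_{\bDelta}$,'' but this check is not routine: $\Phi_{\bDelta}=\phi_{\bDelta}\circ\psi_{\bDelta}$ factors through $\calM_{\bDelta}$, so the surgery at a leaf must be tracked simultaneously on maximal paths, maximal antipaths, and both bijections, across the cases you list (orientation of $\alpha$, valency of its other endpoint, presence of a relation at that endpoint). The delicate point is precisely to rule out that the new thread created at the leaf forms its own short $\Phi$-orbit instead of being spliced into the existing cycle; nothing in the numerical identities prevents a decomposition such as $[1,0]+[|\Delta_0|,|\Delta_0|-1]$, so this must come out of the local analysis, which you have not carried out. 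A way to close the gap with much less bookkeeping, using only tools the paper has already put on the table: by the Assem--Happel fact quoted immediately before the lemma, a gentle quiver of tree type has path algebra derived equivalent to $kA_{|\Delta_0|}$, and by Theorem~\ref{theorem_AAG} the function $f$ is a derived invariant among gentle algebras; hence it suffices to verify $f=[n+1,n-1]$ for the linearly oriented $A_n$-quiver with no relations, where $\calN$ consists of $n+1$ threads ($n-1$ single arrows plus two trivial ones, in the relation-free linear orientation one checks directly that $\Phi$ cycles through them) and the computation is immediate.
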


\begin{proof}
See~\cite{AAG}*{Section~7}.
\end{proof}

Consequently, we have the following characterization of the gentle
quivers of tree type.

\begin{proposition} \label{prop_gentle_A}
A gentle quiver $\bDelta$ is of tree type if and only if
$f_{\bDelta} = [p + 2, p]$ for some $p \in \bbN$.
\end{proposition}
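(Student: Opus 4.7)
The forward direction is immediate from Lemma~\ref{lemma_gentle_A}: if $\bDelta$ is of tree type then $|\Delta_1| = |\Delta_0| - 1$, so $f_{\bDelta} = [|\Delta_0|+1, |\Delta_0|-1] = [p+2, p]$ with $p := |\Delta_0| - 1 \in \bbN$. Thus the only real content is the converse, and my plan is to deduce it from the counting identities established in the preceding lemma.

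Suppose $f_{\bDelta} = [p+2, p]$ for some $p \in \bbN$. Then the sum $\sum_{\calO} p(\calO)$, taken over $\calO \in \calN_\bDelta/\bbZ \cup \calC_\bDelta/\bbZ$, equals $p+2$ (there is a single orbit contributing, and it contributes $p+2$), while $\sum_{\calO} q(\calO) = p$ for the same reason. The previous lemma identifies these sums with $2|\Delta_0| - |\Delta_1|$ and $|\Delta_1|$ respectively, so I get the linear system
\[
2|\Delta_0| - |\Delta_1| = p + 2, \qquad |\Delta_1| = p.
\]
Adding the two equations yields $2|\Delta_0| = 2p + 2$, hence $|\Delta_0| = p + 1 = |\Delta_1| + 1$, which is exactly the defining condition for $\bDelta$ to be of tree type.

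There is essentially no obstacle here: once one has the two identities from the preceding lemma, the converse reduces to a trivial two-by-two linear system. The only thing to be mildly careful about is the convention that a single nonzero value $[p+2, p]$ of $f_\bDelta$ means one orbit with those invariants (and no others), which is exactly how the characteristic function notation introduced just above the statement is meant to be read. Nothing further — connectedness of $\Delta$ is part of the definition of a gentle quiver, and the equality $|\Delta_0| = |\Delta_1| + 1$ together with connectedness is the standard tree-type condition.
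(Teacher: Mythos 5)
Your proof is correct and follows essentially the same route as the paper: the forward direction is quoted from Lemma~\ref{lemma_gentle_A}, and the converse is deduced from the two counting identities $\sum_{\calO} p(\calO) = 2|\Delta_0| - |\Delta_1|$ and $\sum_{\calO} q(\calO) = |\Delta_1|$ of the preceding lemma, exactly as the paper does (your version is in fact slightly more explicit, and avoids a small typo in the paper's displayed computation).
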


\begin{proof}
If $f_{\bDelta} = [p + 2, p]$, then $|\Delta_1| = p$ and
$|\Delta_0| = \frac{1}{2} (|\Delta_0| + (p + 2)) = p + 1$, hence
$\bDelta$ is of tree type. The inverse implication follows from
the previous lemma.
\end{proof}

Let $\Delta$ be a connected quiver. An arrow $\alpha \in \Delta_1$
is called a branch arrow if the quiver $\Delta \setminus \{ \alpha
\}$ is not connected, otherwise we call $\alpha$ a cycle arrow.
Obviously $\Delta$ contains a cycle arrow if and only if
$|\Delta_0| \leq |\Delta_1|$. We say that $\Delta$ is a 1-cycle
quiver if $|\Delta_0| = |\Delta_1|$. If $\Delta$ is a 1-cycle
quiver and there are no branch arrows in $\Delta$, then we call
$\Delta$ a cycle. We will always assume that given a cycle
$\Delta$ we are also given its orientation, i.e., we may speak
about clockwise and anticlockwise oriented arrows and relations.

A gentle quiver $\bDelta$ is called a 1-cycle gentle quiver if
$\Delta$ is a 1-cycle quiver. We have the following
characterization of the 1-cycle gentle quivers.

\begin{proposition} \label{prop_1_cycle}
A gentle quiver $\bDelta$ is a 1-cycle gentle quiver if and only
if $f_{\bDelta} = [p + r, p] + [q - r, q]$ for some $p, q, r \in
\bbN$.
\end{proposition}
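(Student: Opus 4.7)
My plan is to treat the two directions separately.

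For the ``if'' direction: assuming $f_\bDelta = [p+r, p] + [q-r, q]$, summation gives $\sum_\calO p(\calO) = (p+r)+(q-r) = p+q$ and $\sum_\calO q(\calO) = p+q$. The previous lemma identifies these with $2|\Delta_0| - |\Delta_1|$ and $|\Delta_1|$ respectively, forcing $|\Delta_0| = |\Delta_1|$; hence $\bDelta$ is a 1-cycle gentle quiver.

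For the ``only if'' direction, let $\bDelta$ be a 1-cycle gentle quiver. My first step is to reduce to showing $|\calN_\bDelta/\bbZ \cup \calC_\bDelta/\bbZ| = 2$. Granting this, if the two orbits have invariants $(a, b)$ and $(c, d)$, the previous lemma forces $a + c = |\Delta_1| = b + d$, hence $a - b = d - c$. After swapping the orbits if necessary to assume $a \geq b$, setting $p := b$, $r := a-b$, and $q := d$ realizes $f_\bDelta = [p+r, p] + [q-r, q]$ with $p, r, q, q-r \in \bbN$ (the degenerate case of both orbits coinciding forces $a = b$ by $\sum p = \sum q$ and is covered by $r = 0$).

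I then expect to prove the orbit-count claim by induction on $|\Delta_0|$. For the base case, if no vertex has degree $1$, then connectedness together with $2|\Delta_1| = \sum_v \deg v = 2|\Delta_0|$ forces every vertex to have degree exactly $2$, so $\bDelta$ is a pure cycle; I then explicitly compute $\calM_\bDelta$, $\calN_\bDelta$, $\calC_\bDelta$ and the maps $\phi_\bDelta$, $\psi_\bDelta$, $\Psi_\bDelta$, case by case according to the arrow orientations and relations along the cycle, to exhibit exactly two orbits in every configuration. For the inductive step, I pick a leaf vertex $y$ with its unique adjacent arrow $\delta$, pass to the gentle quiver $\bDelta'$ obtained by deleting $y$ and $\delta$ (and adjusting the relations at the other endpoint of $\delta$ to preserve gentleness), and invoke the induction hypothesis after showing that the deletion preserves the orbit count.

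The main obstacle will be the bookkeeping in the inductive step. Under the deletion, maximal paths and antipaths containing $\delta$ get truncated, the trivial maximal (anti)paths at $y$ vanish, certain previously non-maximal trivial (anti)paths at the other endpoint of $\delta$ may become maximal, and the set $\calC$ may gain or lose elements. I plan to organize the case analysis according to the orientation of $\delta$ and the values of $\sigma \delta$ and $\tau \delta$, verifying in each case that the affected elements of $\calM_\bDelta \cup \calN_\bDelta \cup \calC_\bDelta$ all lie in a single orbit of $\bDelta$ and get replaced by corresponding elements of $\bDelta'$ lying in a single orbit, so that the total orbit count is preserved.
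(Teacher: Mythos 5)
Your proposal is correct in substance but takes a genuinely different route from the paper on the harder direction. The ``if'' direction is exactly the paper's argument (it is the same computation as in the proof of Proposition~\ref{prop_gentle_A}): the counting lemma gives $\sum_{\calO} p (\calO) = \sum_{\calO} q (\calO)$, hence $2 |\Delta_0| - |\Delta_1| = |\Delta_1|$. For the ``only if'' direction the paper does not argue at all --- it delegates the computation of $f_{\bDelta}$ for 1-cycle gentle quivers to \cite{AAG}*{Section~7}, and only later proves the branchless case explicitly (Proposition~\ref{prop_interpretation}). You instead propose a self-contained proof: reduce to showing that $\calN_{\bDelta} / \bbZ \cup \calC_{\bDelta} / \bbZ$ has exactly two elements (which, combined with the counting lemma, does yield the asserted shape of $f_{\bDelta}$, since the two orbits $(a, b)$ and $(c, d)$ satisfy $a - b = d - c$), establish this for pure cycles by the explicit analysis the paper records as Proposition~\ref{prop_interpretation}, and then induct by deleting leaves. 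The leaf-deletion step is sound, and the observation that makes it work is worth stating explicitly: if $\delta$ is the arrow at a leaf $y$ (say $t \delta = y$), the two elements of $\calN_{\bDelta}$ that disappear, namely $\bone_{y, - \tau \delta}'$ and the maximal antipath whose first arrow is $\delta$, are $\Phi_{\bDelta}$-adjacent, so they are replaced by a single new maximal antipath inside the same orbit; moreover $\calC_{\bDelta}$ is untouched, since a leaf arrow never lies in $\calC_{\bDelta}$. Two cautions: the correct deletion is the paper's $\bDelta \setminus \{ \delta \}$, which only discards the relations containing $\delta$ --- no relations may be ``adjusted'' or added, as that would change the algebra and the invariant; and the case analysis you defer is the entire content of the induction, so it must actually be carried out. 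What your route buys is independence from the external computation in \cite{AAG}; what the paper's route buys is brevity.
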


\begin{proof}
Analogous to the proof of Proposition~\ref{prop_gentle_A} (in
particular, we have to use results of~\cite{AAG}*{Section~7}).
\end{proof}

We present a combinatorial interpretation of the numbers in the
above proposition for a 1-cycle gentle quiver $\bDelta$ without
branch arrows (i.e., $\Delta$ is a cycle). Let $\Delta_1'$ and
$\Delta_1''$ denote the sets of the clockwise and the
anticlockwise oriented arrows, respectively. We divide the
clockwise oriented arrows into two classes $\Delta_1'^{(1)}$ and
$\Delta_1'^{(2)}$ in the following way:
\[
\Delta_1'^{(1)} := \{ \alpha \in \Delta_1' \mid \text{there exists
$\beta \in \Delta_1$ such that $(\beta, \alpha) \in R$} \}
\]
(i.e., $\Delta_1'^{(1)}$ consists of the clockwise oriented arrows
$\alpha$ such that $t \alpha$ is the middle vertex of a zero
relation) and $\Delta_1'^{(2)} := \Delta_1' \setminus
\Delta_1'^{(1)}$. Next for each $\alpha \in \Delta_1'$ we define
$\omega_\alpha \in \calN_\bDelta$: we put $\omega_\alpha :=
\bone_{t \alpha, - \tau \alpha}'$ if $\alpha \in \Delta_1'^{(1)}$
(note that $\omega_\alpha$ is the maximal antipath $\omega$ in
$\bDelta$ such that $t \omega = t \alpha$ and $\tau \omega = -
\tau \alpha$ in this case), and $\omega_\alpha$ is the maximal
antipath $\omega$ in $\bDelta$ such that $t \omega = t \alpha$ and
$\tau \omega = \tau \alpha$ if $\alpha \in \Delta_1'^{(2)}$. We
define the sets $\Delta_1''^{(1)}$ and $\Delta_1''^{(2)}$, and the
paths $\omega_\alpha$ for $\alpha \in \Delta_1''$, similarly.
Finally, we put
\[
\calO' := \{ \omega_\alpha \mid \alpha \in \Delta_1'^{(1)} \cup
\Delta_1''^{(2)} \} \quad \text{and} \quad  \calO'' := \{
\omega_\alpha \mid \alpha \in \Delta_1'^{(2)} \cup
\Delta_1''^{(1)} \}.
\]

\begin{proposition} \label{prop_interpretation}
Let $\bDelta$ be a 1-cycle gentle quiver without branch arrows.
Using the above notation we have the following.
\begin{enumerate}

\item
If $\calO' \neq \varnothing \neq \calO''$, then
\[
\calN_\bDelta / \bbZ = \{ \calO', \calO'' \} \qquad \text{and}
\qquad \calC_{\bDelta} / \bbZ = \varnothing.
\]

\item
If either $\calO' = \varnothing$ or $\calO'' = \varnothing$, then
\[
\calN_\bDelta / \bbZ = \{ \calO' \cup \calO'' \} \qquad \text{and}
\qquad \calC_{\bDelta} / \bbZ = \{ \Delta_1 \}.
\]

\end{enumerate}
In particular,
\[
f_{\bDelta} = [|\Delta_1'| - r, |\Delta_1'|] + [|\Delta_1''| + r,
|\Delta_1''|],
\]
where $r := |\Delta_1'^{(1)}| - |\Delta_1''^{(1)}|$ is the
difference between the numbers of the clockwise and the
anticlockwise oriented relations.
\end{proposition}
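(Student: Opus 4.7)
The plan is to analyze the cycle $\Delta$ by classifying each vertex as a source (two outgoing arrows), a sink (two incoming), an $R$-pass-through (one in, one out, with the two composable arrows forming a zero relation), or a non-$R$-pass-through. The key local observation is that at an $R$-pass-through both incident arrows are forced to have the same orientation (both clockwise or both anticlockwise), because the outgoing arrow must point to the neighbor opposite the incoming one. Consequently, a maximal antipath of positive length walks through a consecutive chain of $R$-pass-throughs and therefore consists of arrows of a single orientation, terminating at each end at a blocker (source, sink, or non-$R$-pass-through). A direct check with the $\sigma,\tau$ conditions shows that each $R$-pass-through contributes exactly one trivial maximal antipath $\bone_{v,\varepsilon}'$ (with $\varepsilon = -\tau\alpha$, where $\alpha$ is the incoming arrow) and that no other vertex contributes any.

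I would then split according to whether the cycle contains a blocker. If every vertex is an $R$-pass-through, propagation shows that the cycle is directed (entirely clockwise or entirely anticlockwise), so one of $\Delta_1', \Delta_1''$ is empty and hence one of $\calO', \calO''$ is empty --- this matches case~(2). In this situation no arrow lies in a positive-length maximal antipath, so $\calC_{\bDelta} = \Delta_1$, and $\Psi_{\bDelta}$ cyclically permutes all arrows in a single orbit of length $|\Delta_1|$. The $|\Delta_0|$ trivial maximal antipaths form a single $\Phi_{\bDelta}$-orbit, verified by a local computation showing that $\psi_{\bDelta}(\bone_{v,\varepsilon}')$ is the length-one maximal path consisting of the outgoing arrow at $v$ (longer extensions are blocked by the $R$-relations on both sides), so $\Phi_{\bDelta}$ advances the vertex by one step around the cycle. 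If instead a blocker exists, every arrow extends uniquely to a positive-length maximal antipath, so $\calC_{\bDelta} = \varnothing$, and one checks that the assignment $\alpha \mapsto \omega_\alpha$ is a bijection from $\Delta_1$ onto $\calN_{\bDelta}$: arrows in $\Delta_1'^{(1)} \cup \Delta_1''^{(1)}$ index the trivial antipaths (one per $R$-pass-through vertex, via its unique incoming arrow), while arrows in $\Delta_1'^{(2)} \cup \Delta_1''^{(2)}$ index positive-length maximal antipaths (one per "first" arrow, which cannot be extended forward by an antipath).

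The heart of the argument is to show that in this second case $\calO'$ and $\calO''$ are exactly the $\Phi_{\bDelta}$-orbits. I would do this by tracing $\Phi_{\bDelta}$ locally: given $\omega \in \calN_{\bDelta}$, the map $\psi_{\bDelta}(\omega)$ is a maximal path that walks out of $s\omega$ along the arrow of opposite $\sigma$-sign, proceeds through consecutive non-$R$-pass-through vertices (where paths, not antipaths, concatenate), and stops at the next blocker $w$; then $\phi_{\bDelta}$ picks out the unique maximal antipath ending at $w$ with $\tau$-sign opposite to that of the arriving arrow. A finite case analysis on the type of $w$ (sink versus $R$-pass-through) and on the orientation of the arrow arriving at $w$, using the gentle sign relations locally, shows that this composition preserves the partition $\calO' \sqcup \calO''$ and that iteration cycles through every element of the orbit exactly once --- essentially, $\Phi_{\bDelta}$ implements a "half-traversal" of the cycle that alternately visits blockers whose incoming arrow has the designated orientation.

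The formula for $f_{\bDelta}$ then follows by arithmetic. The bijection $\alpha \mapsto \omega_\alpha$ gives $p(\calO') = |\calO'| = |\Delta_1'^{(1)}| + |\Delta_1''^{(2)}| = |\Delta_1''| + r$, while $q(\calO') = \sum_{\omega \in \calO'} \ell(\omega) = |\Delta_1''|$, since trivial antipaths contribute zero and the positive-length antipaths in $\calO'$ are exactly the maximal antipaths consisting of anticlockwise arrows, together covering each anticlockwise arrow exactly once. The analogous computation gives $p(\calO'') = |\Delta_1'| - r$ and $q(\calO'') = |\Delta_1'|$, yielding the stated formula. The main obstacle I expect is paragraph three: the orientation/sign bookkeeping in the orbit analysis produces several sub-cases that must each be verified, and the correctness of the definition of $\omega_\alpha$ (particularly the sign convention $-\tau\alpha$ at $R$-pass-throughs and the choice between $\tau\alpha$ and $-\tau\alpha$ distinguishing $\Delta_1'^{(1)}$-types from $\Delta_1'^{(2)}$-types) is precisely what makes the cases close up.
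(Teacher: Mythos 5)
Your proposal is correct and follows essentially the same route as the paper: identify every maximal antipath as some $\omega_\alpha$ (trivial ones at the middle vertices of relations, positive-length ones indexed by their non-forward-extendable first arrows), observe that $\calC_{\bDelta} \neq \varnothing$ exactly when $\Delta$ is an oriented cycle all of whose compositions are relations, and then analyze the $\Phi_{\bDelta}$-orbits to get the partition $\{\calO', \calO''\}$. The paper compresses that last step into a single sentence, so your more explicit (if still sketched) tracing of $\phi_{\bDelta} \circ \psi_{\bDelta}$ is, if anything, more detailed than the published argument.
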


\begin{proof}
Observe that if $x \in \Delta_0$ and $\varepsilon \in \{ \pm 1
\}$, then there exists $\omega \in \calN_{\bDelta}$ such that $t
\omega = x$ and $\tau \omega = \varepsilon$ if and only if there
is no $\alpha \in \Delta_1$ such that $s \alpha = x$ and $\sigma
\alpha = \varepsilon$. This implies that $\calN_{\bDelta} = \calO'
\cup \calO''$. Indeed, if $\omega \in \calN_{\bDelta}$, then there
exists $\alpha \in \Delta_1$ such that $t \alpha = t \omega$
(otherwise, there exist $\beta', \beta'' \in \Delta_1$ such that
$\beta' \neq \beta''$ and $s \beta' = t \omega = s \beta''$, thus
either $\sigma \beta' = \tau \omega$ or $\sigma \beta'' = \tau
\omega$). Now, if there is $\alpha \in \Delta_1$ such that $t
\alpha = t \omega$ and $\tau \alpha = \tau \omega$, then $\alpha
\in \Delta_1'^{(2)} \cup \Delta_1''^{(2)}$ and $\omega =
\omega_\alpha$. In the other case, $\tau \alpha = - \tau \omega$
for unique $\alpha \in \Delta_1$ such that $t \alpha = t \omega$.
Since $\Delta$ is a cycle, there is unique $\beta \in \Delta_1$
such that $s \beta = t \omega$. The maximality of $\omega$ implies
that $\sigma \beta = - \tau \omega = \tau \alpha$, thus $\alpha
\in \Delta_1'^{(1)} \cup \Delta_1''^{(1)}$ and $\omega = \bone_{t
\alpha, - \tau \alpha} = \omega_\alpha$.

Observe that $\calC_{\bDelta} \neq \varnothing$ if and only if the
arrows of $\Delta$ form an oriented cycle such that $(\alpha,
\beta) \in R$ for all $\alpha, \beta \in \Delta_1$ with $s \alpha
= t \beta$. This means that $\calC_{\bDelta} \neq \varnothing$ if
and only if $\calC_{\bDelta} = \Delta_1$. Moreover, if this is the
case, then $\calC_{\bDelta} / \bbZ = \{ \Delta_1 \}$ and either
$\Delta_1 = \Delta_1'^{(1)}$ or $\Delta_1 = \Delta_1''^{(1)}$.

Finally, the formula for $\calN_{\bDelta} / \bbZ$ follows by an
analysis of the action of $\bbZ$ on $\calN_{\bDelta}$.
\end{proof}

A special role among the 1-cycle gentle quivers is played by the
gentle quivers of type $\tilde{\bbA}$, i.e., the gentle quivers
whose path algebras are derived equivalent to the path algebras of
Euclidean quivers of type $\tilde{\bbA}$. We have the following
characterization of the gentle quivers of type $\tilde{\bbA}$.

\begin{proposition} \label{prop_typeAtilde}
Let $\bDelta$ be 1-cycle gentle quiver. Then $\bDelta$ is of type
$\tilde{\bbA}$ if and only if $f_{\bDelta} = [p, p] + [q, q]$ for
some $p, q \in \bbN$.
\end{proposition}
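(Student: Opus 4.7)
The plan is to parallel the proof of Proposition~\ref{prop_gentle_A}, combining Theorem~\ref{theorem_AAG} (derived invariance of $f$) with the classification of derived equivalence classes of $1$-cycle gentle algebras recorded in~\cite{AAG}*{Section~7}.

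For the forward direction, suppose $\bDelta$ is of type $\tilde{\bbA}$. By definition $\bDelta$ is derived equivalent to the path algebra of some Euclidean quiver $\bGamma$ of type $\tilde{\bbA}$. Such $\bGamma$ is a $1$-cycle gentle quiver without branch arrows and, being hereditary, carries no relations; in the notation of Proposition~\ref{prop_interpretation} this forces $\Gamma_1'^{(1)} = \Gamma_1''^{(1)} = \varnothing$ and hence $r = 0$. Proposition~\ref{prop_interpretation} then gives
\[
f_{\bGamma} = [|\Gamma_1'|, |\Gamma_1'|] + [|\Gamma_1''|, |\Gamma_1''|],
\]
where $|\Gamma_1'|, |\Gamma_1''| \geq 1$ because a fully oriented cycle would yield an infinite dimensional path algebra. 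By Theorem~\ref{theorem_AAG}, $f_{\bDelta} = f_{\bGamma}$ is of the stated form.

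For the backward direction, assume $f_{\bDelta} = [p, p] + [q, q]$ for some $p, q \in \bbN$. Proposition~\ref{prop_1_cycle} gives that $\bDelta$ is $1$-cycle gentle. The forward computation provides a hereditary $\tilde{\bbA}$ quiver $\bGamma$ (the cycle with $p$ clockwise and $q$ anticlockwise arrows, no relations) with the same invariant $f_{\bGamma} = [p, p] + [q, q]$. Appealing to the derived equivalence classification of $1$-cycle gentle algebras in \cite{AAG}*{Section~7} --- which is exactly the tool used in the analogous Proposition~\ref{prop_1_cycle} --- the invariant $f$ is complete on this class, so $\bDelta$ and $\bGamma$ are derived equivalent, whence $\bDelta$ is of type $\tilde{\bbA}$.

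The main obstacle is hidden inside the citation to \cite{AAG}*{Section~7}, namely the completeness of the AAG invariant on $1$-cycle gentle algebras; without it one would have to exhibit an explicit chain of BB-(co)tilts transporting $\bDelta$ to the candidate hereditary $\tilde{\bbA}$ quiver, which would be the technical heart of the argument. A secondary point requiring care is degenerate cases where $p = 0$ or $q = 0$: the restriction $p(\calO), q(\calO) \geq 1$ for every orbit rules out a genuine $[0,0]$ summand, and the remaining degenerate configurations either fall under the tree-type case of Proposition~\ref{prop_gentle_A} or are excluded by the $1$-cycle hypothesis.
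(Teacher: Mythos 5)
Your proposal is correct and takes essentially the same route as the paper: the paper's entire proof is the citation ``See~\cite{AAG}*{Section~7}'', and your argument is a faithful unpacking of how that reference is used, namely computing $f$ for the hereditary $\tilde{\bbA}$ quiver via Proposition~\ref{prop_interpretation}, invoking derived invariance (Theorem~\ref{theorem_AAG}) for the forward direction, and appealing to the completeness of the Avella-Alaminos--Geiss invariant on one-cycle gentle algebras (the actual content of \cite{AAG}*{Section~7}) for the converse. Your handling of the degenerate cases (no orbit can contribute $[0,0]$, so $p,q\geq 1$ is automatic) is also sound.
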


\begin{proof}
See~\cite{AAG}*{Section~7}.
\end{proof}

\section{Completion procedure} \label{section_completion}

Let $\bDelta$ be a gentle quiver. We say that a relation $(\alpha,
\beta) \in R$ is isolated if $(\alpha, \beta) \in
\calN_{\bDelta}$. Given a set $R_0$ of isolated relations, we
define a quiver with relations $\bDelta'$, which we call the
quiver obtained from $\bDelta$ by completing the relations from
$R_0$, in the following way: $\Delta_0' := \Delta_0$, $\Delta_1'
:= \Delta_1 \cup \{ \gamma_\rho \mid \rho \in R_0 \}$ (where
$\gamma_\rho$, $\rho \in R_0$, are ``new'' arrows) and
\[
R' := R \cup \{ (\gamma_\rho, \alpha), (\beta, \gamma_\rho) \mid
\rho = (\alpha, \beta) \in R_0 \}.
\]

The following observation will be crucial.

\begin{lemma} \label{lemma_completion}
Let $\rho = (\alpha, \beta)$ be an isolated relation in a gentle
quiver $\bDelta$ and $\bDelta'$ be the quiver obtained from
$\bDelta$ by completing $\rho$. Then the following hold:
\begin{enumerate}

\item
$\bDelta'$ is a gentle quiver if and only if $\bbZ \cdot \rho \neq
\{ \rho \}$.

\item
If $\bbZ \cdot \rho \neq \{ \rho \}$, then $\calN_{\bDelta'} =
\calN_{\bDelta} \setminus \{ \rho \}$ and
\[
\calN_{\bDelta'} / \bbZ = \{ \calO' \in \calN_{\bDelta} / \bbZ
\mid \calO' \neq \bbZ \cdot \rho \} \cup \{ (\bbZ \cdot \rho)
\setminus \{ \rho \} \}.
\]
Moreover, $\calC_{\bDelta'} = \calC_{\bDelta} \cup \{ \alpha,
\beta, \gamma_\rho \}$ and
\[
\calC_{\bDelta'} / \bbZ = \calC / \bbZ \cup \{ \{ \alpha, \beta,
\gamma_\rho \} \}.
\]

\end{enumerate}
\end{lemma}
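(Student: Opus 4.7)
The plan is to trace the local effect of adjoining the new arrow $\gamma_\rho$ and the new relations $(\gamma_\rho,\alpha)$ and $(\beta,\gamma_\rho)$ at the vertices $t\alpha$ and $s\beta$, consistently using the $\sigma,\tau$-formalism. The new relations force $\sigma\gamma_\rho=\tau\alpha$ and $\tau\gamma_\rho=\sigma\beta$. For the local conditions (1)--(3) of gentleness, I use that the maximality of $\rho=(\alpha,\beta)$ as an antipath forces every arrow in $\Delta_1$ starting at $t\alpha$ to have $\sigma=-\tau\alpha$ and every arrow ending at $s\beta$ to have $\tau=-\sigma\beta$, so the addition of $\gamma_\rho$ never violates the opposite-parity rules. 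For condition (4), I analyse long paths in $\bDelta'$ avoiding $R'$. Since $\bDelta$ has finite dimension, any infinite such path must traverse $\gamma_\rho$ infinitely often, and between consecutive traversals it runs along an $R$-relation-free path in $\bDelta$ from $s\beta$ to $t\alpha$ whose $s$-end carries $\sigma=-\sigma\beta$ (to avoid the relation $(\cdot,\gamma_\rho)\in R'$) and whose $t$-end carries $\tau=-\tau\alpha$ (to avoid $(\gamma_\rho,\cdot)\in R'$). Extending uniquely, such a segment is the permitted thread $\psi_\bDelta(\rho)\in\calM_\bDelta$, and the endpoint conditions translate to $t\psi_\bDelta(\rho)=t\alpha$ with $\tau\psi_\bDelta(\rho)=-\tau\alpha$, that is, $\Phi_\bDelta(\rho)=\rho$. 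Conversely, if $\Phi_\bDelta(\rho)=\rho$, then iterating the loop $\gamma_\rho\cdot\psi_\bDelta(\rho)$ produces an infinite path in $\bDelta'$ with no subpath in $R'$. This establishes~(1).

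For~(2), assume $\bbZ\cdot\rho\neq\{\rho\}$. The sequence $(\gamma_\rho,\alpha,\beta,\gamma_\rho,\alpha,\beta,\ldots)$ is a valid infinite antipath in $\bDelta'$, so none of $\gamma_\rho,\alpha,\beta$ lies in any maximal antipath; in particular $\rho\notin\calN_{\bDelta'}$. Any other $\rho'\in\calN_\bDelta\setminus\{\rho\}$ remains maximal in $\bDelta'$, because the only way $\gamma_\rho$ could extend it would require $(s\rho',\sigma\rho')=(s\beta,\sigma\beta)$ or $(t\rho',\tau\rho')=(t\alpha,\tau\alpha)$, and by uniqueness of the maximal antipath through any given endpoint data this forces $\rho'=\rho$. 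No new maximal antipath involving $\gamma_\rho$ appears, as every antipath meeting $\gamma_\rho$ sits inside the cyclic antipath above. Hence $\calN_{\bDelta'}=\calN_\bDelta\setminus\{\rho\}$ and $\calC_{\bDelta'}=\calC_\bDelta\cup\{\alpha,\beta,\gamma_\rho\}$. The new $\Psi_{\bDelta'}$-orbit $\{\alpha,\beta,\gamma_\rho\}$ follows directly from the three triangle relations, while old $\Psi$-orbits are unchanged.

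For the $\Phi_{\bDelta'}$-orbit structure on $\calN_{\bDelta'}$, the only $\rho'$ for which $\psi_\bDelta(\rho')$ ceases to be maximal in $\bDelta'$ is $\rho'=\Phi_\bDelta^{-1}(\rho)$, for which $\psi_\bDelta(\rho')=\phi_\bDelta^{-1}(\rho)$ gets extended on the $t$-side by $\gamma_\rho$. Tracking the subsequent extension, uniquely determined at each step by the gentle property, shows that it continues along $\psi_\bDelta(\rho)$ traversed from its $s$-end to its $t$-end and then terminates (here the hypothesis $\Phi_\bDelta(\rho)\neq\rho$ is essential, for otherwise $\gamma_\rho$ would re-enter and the extension would not terminate). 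The new permitted thread $\psi_{\bDelta'}(\rho')$ has $t$-endpoint $t\psi_\bDelta(\rho)$ and $\tau$-value $\tau\psi_\bDelta(\rho)$, so $\Phi_{\bDelta'}(\Phi_\bDelta^{-1}(\rho))=\Phi_\bDelta(\rho)$; the orbit of $\rho$ in $\calN_\bDelta$ thus becomes $(\bbZ\cdot\rho)\setminus\{\rho\}$ in $\calN_{\bDelta'}$. All other orbits are preserved because $\psi_\bDelta$ and $\phi_\bDelta$ still compute $\psi_{\bDelta'}$ and $\phi_{\bDelta'}$. The main obstacle in carrying out this plan is precisely this last bookkeeping step: tracking the unique maximal extension of $\phi_\bDelta^{-1}(\rho)$ through $\gamma_\rho$ and along $\psi_\bDelta(\rho)$, using the gentle property at each junction to avoid misidentification of endpoints.
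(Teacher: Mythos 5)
Your proof is correct and follows essentially the same route as the paper's: the paper also reduces everything to the glued thread $\psi_{\bDelta}(\rho)\cdot(\gamma_\rho)\cdot\phi_{\bDelta}^{-1}(\rho)$, observes that finite-dimensionality of $k\bDelta'$ fails exactly when $\psi_{\bDelta}(\rho)=\phi_{\bDelta}^{-1}(\rho)$ (i.e.\ $\Phi_{\bDelta}(\rho)=\rho$), and records the new $\Psi_{\bDelta'}$-orbit $\{\alpha,\beta,\gamma_\rho\}$ together with the rule $\Phi_{\bDelta'}(\Phi_{\bDelta}^{-1}(\rho))=\Phi_{\bDelta}(\rho)$. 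Your write-up simply fills in details the paper declares ``clear'' or ``immediate.''
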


\begin{proof}
Let $\omega' := \psi_{\bDelta} \rho$ and $\omega'' :=
\phi_{\bDelta}^{-1} \rho$.

(1)~It is clear that the first three conditions of the definition
of  a gentle algebra are satisfied by $\bDelta'$. Moreover,
$\omega' \cdot (\gamma_\rho) \cdot \omega''$ is a path in
$\Delta$, which does not contain a subpath from $R$, hence the
last condition is satisfied if and only if $\omega' \neq
\omega''$, that is, if and only if $\bbZ \cdot \rho \neq \{ \rho
\}$.

(2)~Assume that $\bbZ \cdot \rho \neq \{ \rho \}$. The equalities
\[
\calN_{\bDelta'} = \calN_{\bDelta} \setminus \{ \rho \} \qquad
\text{and} \qquad \calC_{\bDelta'} = \calC_{\bDelta} \cup \{
\alpha, \beta, \gamma_\rho \}
\]
are immediate. Moreover,
\[
\Psi_{\bDelta'} \alpha = \beta, \qquad \Psi_{\bDelta'} \beta =
\gamma_\rho \qquad \text{and} \qquad \Psi_{\bDelta'} \gamma_\rho =
\alpha,
\]
while $\Psi_{\bDelta'} \omega = \Psi_{\bDelta} \omega$ for all
$\omega \in \calC_{\bDelta}$. Observe that our assumption implies
that $\omega' \cdot (\gamma_\rho) \cdot \omega'' \in
\calM_{\bDelta'}$. Consequently,
\[
\Phi_{\bDelta'} \omega =
\begin{cases}
\Phi_{\bDelta} \rho & \omega = \Phi_{\bDelta}^{-1} \rho,
\\
\Phi_{\bDelta} \omega & \text{otherwise},
\end{cases}
\]
which finishes the proof.
\end{proof}

Let $\Delta$ the following quiver
\[
\xymatrix{\bullet \ar@/_/[r]_\beta & \bullet \ar@/_/[l]_\alpha}
\]
and $R := \{ (\alpha, \beta) \}$. Then $\rho := (\alpha, \beta)$
is an isolated relation in $\bDelta$. If $\bDelta'$ is obtained
from $\bDelta$ by completing $\rho$, then $\Delta'$ is the
following quiver
\[
\xymatrix{\bullet \ar@/_/[r]_\beta \ar@(ul,dl)_{\gamma_\rho}&
\bullet \ar@/_/[l]_\alpha }
\]
and $R' = \{ (\alpha, \beta), (\gamma_\rho, \alpha), (\beta,
\gamma_\rho) \}$. In particular, $\bDelta'$ is not gentle.

We list some consequences of the above lemma.

\begin{corollary}
Let $R_0$ be a set of isolated relations in a gentle quiver
$\bDelta$ and $\bDelta'$ be the quiver obtained from $\bDelta$ by
completing the relations from $R_0$. Then $\bDelta'$ is a gentle
quiver if and only if $\calO \not \subset R_0$ for each $\calO \in
\calN_{\bDelta} / \bbZ$.
\end{corollary}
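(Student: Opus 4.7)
Plan.  I would prove the two implications separately: the forward direction (``no orbit inside $R_0$'' $\Rightarrow$ $\bDelta'$ gentle) by induction on $|R_0|$, peeling off one completion at a time via Lemma~\ref{lemma_completion}, and the converse by exhibiting an explicit closed walk in $\bDelta'$ that can be iterated to produce paths of unbounded length.

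For the forward direction, the base cases $|R_0|\le 1$ are trivial or are Lemma~\ref{lemma_completion}(1).  For the inductive step pick any $\rho\in R_0$.  Its $\bbZ$-orbit $\calO$ contains an element outside $R_0$, so $|\calO|\ge 2$, hence Lemma~\ref{lemma_completion}(1) produces a gentle quiver $\bDelta_1$ by completing $\{\rho\}$ in $\bDelta$.  Lemma~\ref{lemma_completion}(2) describes $\calN_{\bDelta_1}/\bbZ$ as the old orbits different from $\calO$ together with $\calO\setminus\{\rho\}$, and one checks that the hypothesis persists for $R_0\setminus\{\rho\}$ viewed in $\bDelta_1$: unchanged orbits do not meet $\{\rho\}$, while the split orbit $\calO\setminus\{\rho\}$ retains the original witness from $\calO\setminus R_0$.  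Since the completion construction is independent of the order in which the new arrows $\gamma_\rho$ are adjoined, the induction hypothesis applied to $R_0\setminus\{\rho\}$ in $\bDelta_1$ yields exactly $\bDelta'$.

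For the converse, suppose an orbit $\calO=\{\rho_0,\dots,\rho_{k-1}\}\subseteq R_0$ with $\rho_{i+1}=\Phi_\bDelta\rho_i$ (indices mod $k$).  Write $\rho_i=(\alpha_i,\beta_i)$ and $\omega_i:=\psi_\bDelta\rho_i$, a permitted thread in $\bDelta$ from $s\beta_i$ to $t\alpha_{i+1}$ satisfying $\sigma\omega_i=-\sigma\beta_i$ and (using $\phi_\bDelta\omega_i=\rho_{i+1}$) $\tau\omega_i=-\tau\alpha_{i+1}$.  In $\bDelta'$ the new arrow $\gamma_{\rho_{i+1}}$ runs from $t\alpha_{i+1}$ to $s\beta_{i+1}$, so chaining $\omega_0,\gamma_{\rho_1},\omega_1,\gamma_{\rho_2},\dots,\omega_{k-1},\gamma_{\rho_0}$ defines a closed walk returning to $s\beta_0$.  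Junctions interior to each $\omega_i$ are safe by maximality of $\omega_i$ in $\bDelta$; the junction between $\omega_i$ and $\gamma_{\rho_{i+1}}$ would produce an $R'$-relation only if the last arrow of $\omega_i$ equalled $\alpha_{i+1}$, and dually between $\gamma_{\rho_{i+1}}$ and $\omega_{i+1}$ only if the first arrow of $\omega_{i+1}$ equalled $\beta_{i+1}$, both of which are ruled out by conditions~(2) and~(1) of the gentle definition applied to the opposite $\tau$- and $\sigma$-labels computed above.  Iterating this closed walk yields paths of arbitrary length in $\bDelta'$, violating condition~(4), so $\bDelta'$ is not gentle.

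The main obstacle is the junction analysis in the converse, which must uniformly handle $\omega_i$ of positive length and the degenerate cases $\omega_i=\bone_{s\beta_i,-\sigma\beta_i}$ where two consecutive new arrows $\gamma_{\rho_i},\gamma_{\rho_{i+1}}$ meet directly; the latter is harmless because every $R'$-relation touching a new arrow $\gamma_{\rho_j}$ also involves an original arrow ($\alpha_j$ or $\beta_j$), so two new arrows can never form an $R'$-relation.
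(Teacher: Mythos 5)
Your proposal is correct and rests, as the paper's ``Immediate'' proof does, entirely on Lemma~\ref{lemma_completion}: the sufficiency by peeling off one completion at a time exactly as intended, and the necessity by running the same permitted-thread concatenation $\omega'\cdot(\gamma_\rho)\cdot\omega''$ that proves part~(1) of that lemma, just extended around a whole $\Phi_{\bDelta}$-orbit instead of a fixed point. The extra care you take with the degenerate length-zero threads and with junctions between two new arrows is sound (no relation in $R'$ involves two new arrows), so this is a faithful, if more explicit, version of the paper's argument.
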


\begin{proof}
Immediate.
\end{proof}

\begin{corollary} \label{coro_f_completion}
Let $R_0$ be a set of isolated relations in a gentle quiver
$\bDelta$ and $\bDelta'$ be the quiver obtained from $\bDelta$ by
completing the relations from $R_0$. If $\bDelta'$ is a gentle
quiver, then
\[
f_{\bDelta'} = |R_0| \cdot [0, 3] + \sum_{\calO \in
\calN_{\bDelta} / \bbZ \cup \calC_{\bDelta} / \bbZ} [p (\calO) - m
(\calO), q (\calO) - 2 m (\calO)],
\]
where $m (\calO) := |R_0 \cap \calO|$ for $\calO \in
\calN_{\bDelta} / \bbZ \cup  \calC_{\bDelta} / \bbZ$.
\end{corollary}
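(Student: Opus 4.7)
The plan is induction on $n := |R_0|$. The base case $n = 0$ is immediate: $\bDelta' = \bDelta$, so $m(\calO) = 0$ for every $\calO$ and the right-hand side reduces to the definition of $f_{\bDelta}$.

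For the inductive step I would pick any $\rho = (\alpha, \beta) \in R_0$, let $\bDelta''$ be the quiver obtained from $\bDelta$ by completing only $\rho$, and note that $\bDelta'$ is then the quiver obtained from $\bDelta''$ by completing $R_0 \setminus \{\rho\}$ (the completion operation just adjoins new arrows together with their associated relations, so it commutes with performing the remaining completions). Write $\calO_\rho := \bbZ \cdot \rho \in \calN_{\bDelta}/\bbZ$. Since $\bDelta'$ is gentle, the preceding corollary gives $\calO \not\subset R_0$ for every $\calO \in \calN_{\bDelta}/\bbZ$, so in particular $|\calO_\rho| \geq 2$, whence Lemma~\ref{lemma_completion}(1) ensures $\bDelta''$ is itself gentle. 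By Lemma~\ref{lemma_completion}(2) each remaining $\rho' \in R_0 \setminus \{\rho\}$ lies in $\calN_{\bDelta''} = \calN_{\bDelta} \setminus \{\rho\}$, hence stays isolated in $\bDelta''$, and the condition $\calO'' \not\subset R_0 \setminus \{\rho\}$ for every $\calO'' \in \calN_{\bDelta''}/\bbZ$ transfers directly from the analogous condition for $\bDelta$. Thus the inductive hypothesis applies to $\bDelta''$ with relation set $R_0 \setminus \{\rho\}$.

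I would then compare the resulting formula term by term with the target, using Lemma~\ref{lemma_completion}(2) to match orbits. The fresh orbit $\{\alpha, \beta, \gamma_\rho\} \in \calC_{\bDelta''}/\bbZ$ has $p = 0$, $q = 3$, and receives multiplicity zero (since $R_0$ consists of length-$2$ antipaths, not single arrows), so it contributes a $[0,3]$ that combines with $|R_0 \setminus \{\rho\}| \cdot [0, 3]$ to give $|R_0| \cdot [0, 3]$. Every orbit in $\calC_{\bDelta}/\bbZ$ and every orbit in $\calN_{\bDelta}/\bbZ$ distinct from $\calO_\rho$ survives unchanged with identical $p$, $q$, and $m$. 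Finally, $\calO_\rho$ is replaced by $\calO_\rho \setminus \{\rho\}$, with $p$, $q$, $m$ decreased by $1$, $2$, $1$ respectively (using $\ell(\rho) = 2$ and $\rho \in R_0$), so substituting into the inductive formula recovers exactly $[p(\calO_\rho) - m(\calO_\rho),\, q(\calO_\rho) - 2 m(\calO_\rho)]$.

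The only delicate point is the verification that gentleness and isolatedness persist after each single completion so that the induction can be iterated; both follow at once from Lemma~\ref{lemma_completion} combined with the preceding corollary, and everything else is routine bookkeeping.
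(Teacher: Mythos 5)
Your proposal is correct and is precisely the argument the paper intends: its proof of Corollary~\ref{coro_f_completion} is simply ``follows from the above lemma by induction,'' and your write-up carries out that induction on $|R_0|$, using Lemma~\ref{lemma_completion} together with the preceding corollary to keep gentleness and isolatedness at each step and to track how $p$, $q$ and $m$ change on each orbit. The bookkeeping (the new triangle contributing $[0,3]$ with multiplicity zero, and $\bbZ\cdot\rho$ losing $1$, $2$, $1$ from $p$, $q$, $m$) is exactly right.
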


\begin{proof}
Follows from the above lemma by induction.
\end{proof}

Now we study the inverse operation.

If $\Delta$ is a quiver and $\Delta_1' \subset \Delta_1$, then we
put $\Delta \setminus \Delta_1' := (\Delta_0, \Delta_1 \setminus
\Delta_1')$. Similarly, if $\bDelta$ is a gentle quiver and
$\Delta_1' \subset \Delta_1$, then we denote by $\bDelta \setminus
\Delta_1'$ the pair
\[
(\Delta \setminus \Delta_1', \{ \rho \in R : \text{$(\alpha)$ is
not a subpath of $\rho$ for each $\alpha \in \Delta_1'$} \}).
\]
In the above situation we say that $\bDelta \setminus \Delta_1'$
is obtained from $\bDelta$ by removing the arrows from
$\Delta_1'$.

Recall that by a triangle in a gentle quiver $\bDelta$ we mean
every orbit in $\calC_{\bDelta} / \bbZ$ consisting of 3 elements.
Obviously, if $\calO'$ and $\calO''$ are different triangles in a
gentle quiver $\bDelta$, then $\calO' \cap \calO'' = \varnothing$.

\begin{corollary} \label{prop_f_remove}
Let $\calO_1$, \ldots, $\calO_m$ be pairwise different triangles
in a gentle quiver $\bDelta$ and $\alpha_i \in \calO_i$ for each
$i \in [1, m]$. If $\bDelta' := \bDelta \setminus \{ \alpha_i \mid
i \in [1, m] \}$ and
\[
f_{\bDelta} = m \cdot [0, 3] + \sum_{i \in [1, n]} [p_i, q_i]
\]
for some $p_i, q_i \in \bbN$, $i \in [1, n]$, then $\bDelta'$ is a
gentle quiver and
\[
f_{\bDelta'} = \sum_{i \in [1, n]} [p_i + m_i, q_i + 2 m_i]
\]
for some $m_1, \ldots, m_n \in \bbN$ such that $m_1 + \cdots + m_n
= m$.
\end{corollary}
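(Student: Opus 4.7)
The approach is to realize $\bDelta$ as a completion of $\bDelta'$ and apply Corollary \ref{coro_f_completion} in reverse. Write each triangle as $\calO_i = \{\alpha_i, \beta_i, \gamma_i\}$ with $(\alpha_i, \beta_i), (\beta_i, \gamma_i), (\gamma_i, \alpha_i) \in R$, and set $\rho_i := (\beta_i, \gamma_i)$, the relation in $\calO_i$ not involving the arrow we are removing. Gentleness of $\bDelta'$ is routine: conditions (1)--(3) are preserved because removing an arrow only lowers degrees at its endpoints and strips away the relations incident to it, and condition (4) follows because any path in $\bDelta'$ is a path in $\bDelta$ whose $R$-subpaths automatically avoid $\alpha_1, \ldots, \alpha_m$ and hence lie in $R'$. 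Disjointness of the triangles (noted in the paragraph above the statement) ensures the removals are mutually independent.

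The key local step is to show that each $\rho_i$ is isolated in $\bDelta'$ and that $\bbZ \cdot \rho_i \neq \{\rho_i\}$. Maximality of $\rho_i = (\beta_i, \gamma_i)$ as an antipath in $\bDelta'$ requires that no arrow $\delta \in \Delta_1'$ satisfies $s\delta = t\beta_i$ and $\sigma\delta = \tau\beta_i$ (and symmetrically on the left). In $\bDelta$, the arrow $\alpha_i$ was the unique such candidate: any other arrow $\delta \neq \alpha_i$ with $s\delta = s\alpha_i$ would, by condition (1) of the gentle definition together with $(\alpha_i, \beta_i) \in R$, satisfy $\sigma\delta = -\sigma\alpha_i = -\tau\beta_i$. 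Removing $\alpha_i$ therefore eliminates the only possible right-extension, and symmetry handles the left. The condition $\bbZ \cdot \rho_i \neq \{\rho_i\}$ then follows from Lemma \ref{lemma_completion}(1) combined with the observation that completing $\bDelta'$ along $\{\rho_1, \ldots, \rho_m\}$ reproduces a gentle quiver isomorphic to $\bDelta$, with $\gamma_{\rho_i}$ playing the role of $\alpha_i$ and the new relations $(\gamma_{\rho_i}, \beta_i), (\gamma_i, \gamma_{\rho_i})$ playing the roles of $(\alpha_i, \beta_i), (\gamma_i, \alpha_i)$.

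Applying Corollary \ref{coro_f_completion} to $\bDelta'$ with $R_0 := \{\rho_1, \ldots, \rho_m\}$ then yields
\[
f_\bDelta = m \cdot [0, 3] + \sum_{\calO \in \calN_{\bDelta'}/\bbZ \cup \calC_{\bDelta'}/\bbZ} [p(\calO) - m(\calO), q(\calO) - 2m(\calO)],
\]
with $m(\calO) := |R_0 \cap \calO|$. Matching this decomposition of $f_\bDelta$ against the hypothesis $f_\bDelta = m \cdot [0, 3] + \sum_{i=1}^n [p_i, q_i]$ identifies each $[p_i, q_i]$ with some shifted orbit contribution $[p(\calO_i') - m(\calO_i'), q(\calO_i') - 2m(\calO_i')]$; setting $m_i := m(\calO_i')$ then gives $f_{\bDelta'} = \sum_{i=1}^n [p_i + m_i, q_i + 2m_i]$ with $\sum_i m_i = |R_0| = m$. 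The main technical point is the local isolation argument in the second paragraph, where the uniqueness clauses of the gentle definition and the $\sigma, \tau$ formalism of Section \ref{section_AAG} are essential in ruling out a possible second extending arrow at the triangle vertices.
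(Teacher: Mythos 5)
Your proof is correct and takes essentially the same route as the paper's: the paper also realizes $\bDelta$ as the quiver obtained from $\bDelta'$ by completing the relations $(\beta_i, \gamma_i)$ and then invokes Corollary~\ref{coro_f_completion}. The only difference is one of detail --- the paper dismisses the gentleness of $\bDelta'$ and the fact that the $(\beta_i,\gamma_i)$ are isolated relations whose completion recovers $\bDelta$ as easy checks, whereas you have (correctly) written out these verifications.
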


\begin{proof}
One easily checks that $\bDelta'$ is a gentle quiver. Now, for
each $i \in [1, n]$ let $(\beta_i, \gamma_i)$ be a path in
$\Delta$ such that $\calO_i = \{ \alpha_i, \beta_i, \gamma_i \}$.
Then $\bDelta$ is (isomorphic to) the quiver obtained from
$\bDelta'$ by completing the relations $(\beta_i, \gamma_i)$, $i
\in [1, n]$, so the claim follows immediately from the previous
corollary.
\end{proof}

\section{Cluster tilted algebras and the main results}
\label{section_cluster}

For an acyclic quiver $Q$, a category called the cluster category
was defined in~\cite{BMRRT}. Let $\tau_D^{-1}$ be the
quasi-inverse of the Auslander--Reiten translation in the bounded
derived category $D^b (k Q)$. Then the cluster category $\calC =
\calC_{k Q}$ is the orbit category $D^b (k Q) / F$, where $F$ is
the autoequivalence $\tau_D^{-1} \Sigma$. Cluster categories are
canonically triangulated, as shown in~\cite{K}.

An object $T$ in $\calC$ with $\Ext^1_{\calC} (T,T) = 0$ and
$|Q_0|$ pairwise non-isomorphic indecomposable summands, is called
a cluster tilting object. There is a natural embedding of $\mod k
Q$ into $\calC_{k Q}$. Under this embedding, tilting modules are
mapped to tilting objects. The endomorphism ring $\End_{\calC}
(T)$ of a tilting object, is a cluster tilted algebra of type
$Q$~\cite{BMR}. Here we will consider cluster tilted algebras of
types $\bbA$ and $\tilde{\bbA}$. They appear in the following
theorem~\cite{ABChJP}*{Theorem~3.3}.

\begin{theorem} \label{theorem_ABChJP}
Let $C$ be a cluster tilted algebra of type $Q$. Then $C$ is
gentle if and only if $Q$ is either of Dynkin type $\bbA$ or of
Eulidean type $\tilde{\bbA}$.
\end{theorem}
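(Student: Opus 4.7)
The plan is to prove the two implications of the equivalence separately, using the explicit classifications of cluster tilted algebras for the forward direction and a representation-type plus mutation-class analysis for the converse.

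For the ``if'' direction, I would invoke the classification of cluster tilted algebras of Dynkin type $\bbA$ from \cite{BV} and of Euclidean type $\tilde{\bbA}$ from \cite{Ba}. In both cases the bound quivers are explicitly described: they are obtained by gluing, at vertices, oriented cycles on which consecutive arrows compose to zero, possibly decorated with oriented tree-like arms. A direct inspection against the four defining axioms of a gentle quiver then finishes this direction.

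For the ``only if'' direction, suppose $C = \End_{\calC_{kQ}}(T)$ is gentle. Since gentle algebras are (at worst) tame, and cluster tilted algebras are representation-finite when $Q$ is Dynkin, tame when $Q$ is Euclidean, and wild otherwise, the quiver $Q$ must be Dynkin or Euclidean. It remains to exclude the ``branching'' types $\mathbb{D}_n$, $\mathbb{E}_6$, $\mathbb{E}_7$, $\mathbb{E}_8$, $\tilde{\mathbb{D}}_n$, $\tilde{\mathbb{E}}_6$, $\tilde{\mathbb{E}}_7$, $\tilde{\mathbb{E}}_8$. For this I would use the description of $C$ as a Jacobian algebra of a quiver with potential (Keller--Yang, Buan--Iyama--Reiten--Scott), so that the bound quiver of $C$ is determined by any quiver in the mutation class of $Q$ together with a canonical potential. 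The goal is then to show that for each branching type, every representative of the mutation class produces a bound quiver violating one of the gentle axioms: either a vertex is incident to three outgoing or three incoming arrows (violating axiom~(1)), or, when the local degree allows it, the Jacobian relations at a vertex of full local valency $(2,2)$ force a configuration incompatible with axioms~(2)--(3).

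The main obstacle is this last step: controlling the bound quiver simultaneously across an entire mutation class. The Dynkin mutation classes are finite and have been described explicitly, so a direct case check is available; the Euclidean cases are infinite and call for a more conceptual argument, perhaps via a mutation invariant that counts, weighted by the forced Jacobian relations, local configurations forbidden in gentle algebras. Identifying a clean such invariant that distinguishes the branching types from $\bbA$ and $\tilde{\bbA}$ uniformly, and bookkeeping the arrows and relations through a single mutation step, is where I expect the real difficulty of the argument to lie.
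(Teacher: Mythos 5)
This statement is not proved in the paper at all: it is quoted verbatim as Theorem~3.3 of~\cite{ABChJP}, so there is no internal argument to compare yours against. Judged on its own terms, your proposal contains a genuine gap that you yourself flag. The ``if'' direction is fine: the bound quivers of cluster tilted algebras of types $\bbA$ and $\tilde{\bbA}$ are explicitly known from~\cite{BV} and~\cite{Ba} (see also Propositions~\ref{proposition_cluster_A} and~\ref{proposition_cluster_A_tilde} above, where they appear as completions of gentle quivers of tree or $1$-cycle type), and checking the four gentle axioms is routine. The coarse reduction in the ``only if'' direction is also acceptable modulo a reference: gentle algebras are special biserial, hence tame, and one needs the (true but nontrivial) fact that a cluster tilted algebra of wild type is wild, which follows from the equivalence $\calC_{kQ}/\operatorname{add}(\tau T)\simeq\mod\End_{\calC_{kQ}}(T)$.

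The unresolved step is precisely the heart of the theorem: showing that \emph{no} cluster tilted algebra of type $\mathbb{D}_n$, $\mathbb{E}_{6,7,8}$, $\tilde{\mathbb{D}}_n$, $\tilde{\mathbb{E}}_{6,7,8}$ is gentle. Since cluster tilted algebras of a fixed type need not all be derived equivalent, you cannot reduce to a single representative via the Schr\"oer--Zimmermann invariance of gentleness; you genuinely must control every member of every mutation class, uniformly in $n$ for the $\mathbb{D}$-families. Your proposed ``mutation invariant weighted by forced Jacobian relations'' is exactly the missing ingredient, and you do not construct it. For comparison, the proof in~\cite{ABChJP} avoids this bookkeeping entirely by realizing the gentle algebras in question as algebras arising from triangulations of surfaces: the disc gives type $\bbA$ and the annulus gives type $\tilde{\bbA}$, and the surface model supplies the uniform structural control over the whole mutation class that your plan lacks. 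As it stands, your argument establishes only that $Q$ is Dynkin or Euclidean, not the full conclusion.
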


We have the following consequence for our class of algebras.

\begin{corollary} \label{corollary_ABChJP}
Let $C$ be a cluster tilted algebra of type $Q$. If $C$ is derived
equivalent to a gentle algebra, then $Q$ is either of Dynkin type
$\bbA$ or of Eulidean type $\tilde{\bbA}$.
\end{corollary}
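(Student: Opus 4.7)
The plan is to observe that this corollary is essentially a two-line consequence of two results already recalled in the paper, so the proof should be extremely short. The key ingredients are the result of Schröer--Zimmermann~\cite{SZ} (cited in the introduction) that the class of gentle algebras is closed under derived equivalence, together with Theorem~\ref{theorem_ABChJP} of Assem--Br\"ustle--Charbonneau-Jodoin--Plamondon, which gives the exact list of quivers $Q$ whose cluster tilted algebras are gentle.

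More concretely, I would proceed as follows. First, assume that $C = \End_{\calC_{k Q}}(T)$ is a cluster tilted algebra of type $Q$ and that there exists a gentle algebra $\Lambda$ with $D^b(C) \simeq D^b(\Lambda)$ as triangulated categories. By~\cite{SZ}, being gentle is invariant under derived equivalence, so from the gentleness of $\Lambda$ I conclude that $C$ is itself gentle. Now apply Theorem~\ref{theorem_ABChJP} directly to the gentle cluster tilted algebra $C$: its underlying type $Q$ must be either Dynkin $\bbA$ or Euclidean $\tilde{\bbA}$, which is exactly what we need.

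There is really no obstacle here; the statement is a formal corollary and the entire content is transporting gentleness across the derived equivalence and then quoting the classification. The proof block I would write is essentially: \emph{By~\cite{SZ}, $C$ is gentle, and the claim follows from Theorem~\ref{theorem_ABChJP}.} No further case analysis, no combinatorics on $f_{\bDelta}$, and no invocation of the later completion or BB-tilting machinery is necessary at this point.
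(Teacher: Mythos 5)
Your proof is correct and matches the paper's own argument exactly: the paper also cites the Schr\"oer--Zimmermann result to conclude that $C$ is gentle and then invokes Theorem~\ref{theorem_ABChJP}. Nothing further is needed.
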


\begin{proof}
\cite{SZ}*{Corollary~1.2} implies that $C$ is gentle, hence the
claim follows from the previous theorem.
\end{proof}

Now we collect facts about cluster tilted gentle algebras.

The following theorem is a reformulation
of~\cite{BV}*{Proposition~3.1}.

\begin{proposition} \label{proposition_cluster_A}
An algebra $\Lambda$ is a cluster tilted algebra of type $\bbA$ if
and only if there exists a gentle quiver $\bDelta$ of tree type
such that $R$ consists of isolated relations and $\Lambda$ is
isomorphic to the path algebra of the quiver obtained from
$\bDelta$ by completing the relations from $R$.
\end{proposition}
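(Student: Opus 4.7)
The plan is to match the combinatorial description of cluster tilted algebras of type $\bbA$ given in \cite{BV}*{Proposition~3.1} with the completion procedure of Section~\ref{section_completion}. Recall that \cite{BV} describes such algebras as the path algebras of quivers with relations $\bGamma$ satisfying: every relation of $\bGamma$ lies in an oriented $3$-cycle of $\Gamma$, each such oriented $3$-cycle carries all three of its cyclic length-two compositions as relations, distinct oriented $3$-cycles share no arrow, and the quiver obtained from $\Gamma$ by deleting one arrow from each such oriented $3$-cycle is an (undirected) tree.

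For the ``if'' direction, I would take a gentle quiver $\bDelta$ of tree type with $R$ consisting of isolated relations and form $\bDelta'$ by completing all of $R$. Lemma~\ref{lemma_completion} tells us that, for each $\rho = (\alpha, \beta) \in R$, the completion adjoins a new arrow $\gamma_\rho$ with $s \gamma_\rho = t \alpha$ and $t \gamma_\rho = s \beta$ and the two new relations $(\gamma_\rho, \alpha)$, $(\beta, \gamma_\rho)$, producing an oriented $3$-cycle on $\alpha, \beta, \gamma_\rho$ carrying precisely its three cyclic relations. The gentle axiom (3) combined with the isolation of each $\rho \in R$ forces the triangles obtained from different $\rho$'s to be arrow-disjoint. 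Deleting the arrows $\{ \gamma_\rho \mid \rho \in R \}$ from $\bDelta'$ recovers the tree $\bDelta$, so $\bDelta'$ fits the \cite{BV} description and $k \bDelta'$ is cluster tilted of type $\bbA$.

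For the ``only if'' direction, I would start with $\bGamma$ as in \cite{BV}, choose one arrow to play the role of $\gamma_\rho$ in each oriented $3$-cycle, and let $\bDelta$ be the quiver obtained from $\bGamma$ by removing these designated arrows together with the two relations of each triangle into which they enter. The \cite{BV} tree condition gives that $\bDelta$ is of tree type, and the gentle axioms descend from $\bGamma$ to $\bDelta$. A direct local check then shows that the single relation retained from each triangle is a maximal antipath in $\bDelta$, hence isolated, and by construction $\bGamma$ is the completion of $\bDelta$ along this $R$.

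The main obstacle is the local verification in the ``only if'' direction that each retained relation remains isolated even when two triangles of $\bGamma$ share a common vertex. This reduces to the observation that at such a vertex, the two incoming and two outgoing arrows are split into the right pairs by relations via gentle axioms~(2) and~(3), so after removing the two off-vertex arrows of the triangles the only extensions of each retained relation would have been through an arrow that has been removed. Once this local check is done, the remainder of the argument is essentially bookkeeping via Lemma~\ref{lemma_completion}.
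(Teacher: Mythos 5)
Your argument is correct and follows the same route the paper intends: the paper offers no proof at all, merely asserting that the statement is a reformulation of \cite{BV}*{Proposition~3.1}, and your proposal simply carries out that translation explicitly (completion of isolated relations producing arrow-disjoint fully-related oriented $3$-cycles, and conversely deleting one arrow per $3$-cycle yielding a tree-type gentle quiver whose retained relations are isolated by gentle axiom~(3)). The local checks you flag are exactly the routine verifications the authors leave to the reader, and your handling of them is sound.
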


As an immediate consequence of the above proposition,
Corollary~\ref{coro_f_completion} and
Proposition~\ref{prop_gentle_A} we obtain the following.

\begin{corollary} \label{corollary_cluster_A}
If $\bDelta$ is a gentle quiver such that its path algebra is
derived equivalent to a cluster tilted algebra of type $\bbA$,
then
\[
f_{\bDelta} = m \cdot [0, 3] + [p + m + 2, p]
\]
for some $m, p \in \bbN$.
\end{corollary}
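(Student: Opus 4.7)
The approach is to reduce the computation of $f_\bDelta$ to a distinguished presentation of the cluster tilted algebra itself.

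By Theorem~\ref{theorem_ABChJP}, any cluster tilted algebra $C$ of type $\bbA$ is gentle, so by Theorem~\ref{theorem_AAG} any gentle $\bDelta$ derived equivalent to $C$ satisfies $f_\bDelta = f_C$. Proposition~\ref{proposition_cluster_A} lets me write $C \simeq k\bDelta_C$, where $\bDelta_C$ is obtained from a gentle quiver $\bDelta_0$ of tree type (all of whose relations are isolated) by completing the entire relation set $R := R_{\bDelta_0}$. I set $m := |R|$ and aim to compute $f_{\bDelta_C}$ explicitly.

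Lemma~\ref{lemma_gentle_A} gives $f_{\bDelta_0} = [|\Delta_0|+1,\, |\Delta_0|-1]$, so $\calN_{\bDelta_0}/\bbZ$ consists of a single orbit $\calO_*$ with $p(\calO_*) = |\Delta_0|+1$ and $q(\calO_*) = |\Delta_0|-1$, and $\calC_{\bDelta_0}/\bbZ = \varnothing$ since a tree has no oriented cycles. Each isolated relation is by definition a maximal antipath, hence $R \subset \calO_*$ and $|R \cap \calO_*| = m$. Corollary~\ref{coro_f_completion} then yields
\[
f_{\bDelta_C} = m \cdot [0,3] + \bigl[|\Delta_0| + 1 - m,\; |\Delta_0| - 1 - 2m\bigr],
\]
and writing $p := |\Delta_0| - 1 - 2m$ recasts this as $m \cdot [0,3] + [p + m + 2,\, p]$.

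The single point that is not a mechanical substitution is checking $p \in \bbN$. Two distinct isolated relations cannot share their first arrow nor their second, by gentleness condition~(3); and they cannot form a chain $(\alpha,\beta),\,(\beta,\gamma)$, since then $(\alpha,\beta,\gamma)$ would strictly extend $(\alpha,\beta)$ as an antipath, contradicting its maximality. Thus the $m$ isolated relations occupy $2m$ distinct arrows, forcing $2m \leq |\Delta_1| = |\Delta_0|-1$, and hence $p \geq 0$. This arrow-disjointness observation is the only substantive step; everything else is a direct assembly of the invariants from Sections~\ref{section_AAG} and~\ref{section_completion}.
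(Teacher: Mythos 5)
Your proof is correct and follows essentially the same route as the paper, which obtains the corollary as an immediate consequence of Proposition~\ref{proposition_cluster_A}, Corollary~\ref{coro_f_completion} and Proposition~\ref{prop_gentle_A} (together with Theorem~\ref{theorem_AAG} to transfer the invariant across the derived equivalence). Your extra verification that $p \geq 0$ is a harmless addition; it also follows automatically since the entries produced by Corollary~\ref{coro_f_completion} are the invariants of actual orbits of the completed gentle quiver and hence lie in $\bbN$.
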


Moreover, the following result follows from the proof
of~\cite{BV}*{Theorem}.

\begin{theorem} \label{theorem_BB_A}
Let $\Lambda$ and $\Lambda'$ be cluster tilted algebras of type
$\bbA$. Then $\Lambda$ and $\Lambda'$ are derived equivalent if
and only if $\Lambda$ and $\Lambda'$ are BB-equivalent.
\end{theorem}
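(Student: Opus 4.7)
The plan is to establish the nontrivial implication by reducing any cluster tilted algebra of type $\bbA$ to a canonical representative of its derived equivalence class via BB-(co)tilting; the reverse implication is automatic, since each BB-(co)tilting module is a tilting module and thus gives a derived equivalence between its endomorphism ring and the algebra one started with.

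First I would extract the numerical data. If $\Lambda$ and $\Lambda'$ are derived equivalent, Theorem~\ref{theorem_AAG} gives $f_\Lambda = f_{\Lambda'}$, and by Corollary~\ref{corollary_cluster_A} this common value has the form $m \cdot [0,3] + [p+m+2, p]$ for some $m, p \in \bbN$. By Proposition~\ref{proposition_cluster_A} each of $\Lambda, \Lambda'$ arises by completing $m$ isolated relations in a gentle quiver of tree type with $p+m+2$ vertices. Hence it suffices to prove that any two cluster tilted algebras of type $\bbA$ sharing the invariants $(m, p)$ are BB-equivalent.

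Next I would fix a canonical representative for each pair $(m, p)$, for instance the cluster tilted algebra obtained by completing $m$ isolated relations placed at one end of a linear orientation of an $\bbA_{p+m+2}$-type quiver. The heart of the proof is then to produce, from an arbitrary $\Lambda$ with invariants $(m, p)$, a sequence of BB-tiltings or BB-cotiltings (as defined in Section~\ref{section_tilting}) leading to this canonical form. Two kinds of moves should suffice: reflections that reorient arrows of the underlying tree quiver (the analogues of Bernstein--Gelfand--Ponomarev reflections in Dynkin type $\bbA$), and moves that shift the position of a triangle along the tree. Each can be realized as a BB-reflection at a vertex where the hypothesis of Section~\ref{section_tilting} is met; after performing such a move the class of completions of gentle tree-type quivers with $m$ isolated relations is preserved, since the invariants $(m, p)$ — and hence the gross combinatorial shape — are invariant under derived equivalence.

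The main obstacle is the combinatorial bookkeeping: one must check that at every stage there really is a vertex $x$ at which the BB-reflection is defined, that the quiver produced is again a completion (in the sense of Proposition~\ref{proposition_cluster_A}) of a gentle tree-type quiver with isolated relations, and that some complexity measure — say the number of arrows separating the triangles from their canonical positions, together with a measure of mismatch in the orientations of the tree — strictly decreases. In essence this rephrases the argument of \cite{BV} in the combinatorial language of Section~\ref{section_tilting} and Section~\ref{section_AAG}; once the moves and the measure are set up, a straightforward induction closes the proof.
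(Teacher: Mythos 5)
Your overall strategy coincides with the paper's: the paper proves this theorem in one line by observing that every derived equivalence used in the classification proof of Buan--Vatne \cite{BV} is a (co)reflection in the sense of Section~\ref{section_tilting}, hence a BB-(co)tilt. Your proposal amounts to re-deriving that classification (reduction of any cluster tilted algebra of type $\bbA$ with a given number $m$ of $3$-cycles to a normal form by reflections) rather than citing it, so the route is the same in substance, only with the work of \cite{BV} redone instead of quoted.

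Two points need repair, however. First, the justification you give for the key invariance property is not valid: you claim that after each move the algebra remains a completion of a gentle tree-type quiver with $m$ isolated relations ``since the invariants $(m,p)$ are invariant under derived equivalence.'' Invariance of $f$ only tells you that the new algebra lies in the class of gentle algebras with $f = m\cdot[0,3]+[p+m+2,p]$; that every such algebra is BB-equivalent to a cluster tilted algebra of type $\bbA$ is exactly the content of Theorem~\ref{main_A}, which is proved later (using the present theorem among other things) and cannot be invoked here. What you actually need is to verify, from the explicit combinatorial description of the reflection at a vertex $x$, that each chosen move sends a completion of a tree-type quiver to another such completion and makes measurable progress toward the normal form; this case-by-case analysis is the entire content of the argument (it is what \cite{BV} does, and what Proposition~\ref{prop_no_branch_rel_A} does in the more general setting), not mere ``bookkeeping.'' Second, a minor slip: the underlying tree-type quiver has $p+2m+1$ vertices, not $p+m+2$, since completing $m$ relations turns $[|\Delta_0|+1,|\Delta_0|-1]$ into $m\cdot[0,3]+[|\Delta_0|+1-m,\,|\Delta_0|-1-2m]$, so that $p=|\Delta_0|-1-2m$.
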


\begin{proof}
It is enough to observe that all derived equivalences used
in~\cite{BB} are in fact (co)reflections as defined in
Section~\ref{section_tilting}.
\end{proof}

We have the following analogue of
Proposition~\ref{proposition_cluster_A} for the cluster tilted
algebras of type $\tilde{\bbA}$.

\begin{proposition} \label{proposition_cluster_A_tilde}
The following conditions are equivalent for a gentle algebra
$\Lambda$.
\begin{enumerate}

\item
$\Lambda$ is a cluster tilted algebra of type $\tilde{\bbA}$.

\item
There exists a gentle quiver $\bDelta$ of type $\tilde{\bbA}$ such
that $R$ consists of isolated relations and $\Lambda$ is
isomorphic to the path algebra of the quiver obtained from
$\bDelta$ by completing the relations from $R$.

\item
There exists a 1-cycle gentle quiver $\bDelta$ such that $R$
consists of isolated relations and $\Lambda$ is isomorphic to the
path algebra of the quiver obtained from $\bDelta$ by completing
the relations from $R$.

\end{enumerate}
\end{proposition}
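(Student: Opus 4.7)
The implication (2)$\Rightarrow$(3) is immediate, since every gentle quiver of type $\tilde{\bbA}$ is by definition a $1$-cycle gentle quiver. The equivalence (1)$\Leftrightarrow$(2) I would extract from the classification of cluster tilted algebras of type $\tilde{\bbA}$ given in~\cite{Ba}, in exactly the way~\cite{BV}*{Proposition~3.1} yields Proposition~\ref{proposition_cluster_A}: the quivers with relations listed in~\cite{Ba} are precisely those obtained by completing all (automatically isolated) relations of a gentle quiver of type $\tilde{\bbA}$.

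The substantive direction is (3)$\Rightarrow$(2). Suppose $\bDelta$ is a $1$-cycle gentle quiver with $R$ consisting entirely of isolated relations, and let $\bDelta^{\mathrm{c}}$ be the completion, so that $\Lambda \cong k\bDelta^{\mathrm{c}}$. By Lemma~\ref{lemma_completion} every $\rho = (\alpha, \beta) \in R$ produces a directed $3$-cycle $\{\alpha, \beta, \gamma_\rho\}$ in which every pair of consecutive arrows is a relation; these are exactly the triangles of $\bDelta^{\mathrm{c}}$ in the sense of Section~\ref{section_AAG}, noting that the isolated hypothesis forces $\calC_{\bDelta} = \varnothing$ so no triangles exist in $\bDelta$ itself. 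Conversely, Corollary~\ref{prop_f_remove} guarantees that deleting one arrow from each of these $|R|$ triangles yields another gentle quiver whose completion along the resulting length-$2$ broken-triangle antipaths is again (isomorphic to) $\bDelta^{\mathrm{c}}$, hence has path algebra $\Lambda$. It therefore suffices to exhibit one such choice of removals producing a gentle quiver of type $\tilde{\bbA}$.

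To locate the right removals I would pass to the invariant $f$. By Propositions~\ref{prop_1_cycle} and~\ref{prop_typeAtilde} a $1$-cycle gentle quiver is of type $\tilde{\bbA}$ if and only if $f=[p,p]+[q,q]$ for some $p,q \in \bbN$, i.e., the parameter $r$ in the expression $f=[p+r,p]+[q-r,q]$ vanishes; in the branch-free case Proposition~\ref{prop_interpretation} identifies this $r$ with the difference between the numbers of clockwise and anticlockwise relations along the cycle. The three possible removals in a fixed triangle of $\bDelta^{\mathrm{c}}$ yield three candidate $1$-cycle uncompletions whose $r$-values differ in a way that depends only on how the triangle sits with respect to the cycle. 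The plan is to show that by toggling the retained arrow inside a single triangle one adjusts the current $r$-value by a controlled increment, so that starting from the uncompletion corresponding to $\bDelta$ itself and iterating such toggles one can drive $r$ to $0$; Proposition~\ref{prop_typeAtilde} then renders the resulting quiver of type $\tilde{\bbA}$.

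The main obstacle will be the local case analysis behind the preceding paragraph: one has to enumerate how a triangle can attach to the underlying cycle of a $1$-cycle gentle quiver (all three vertices on the cycle, two on the cycle, one on the cycle, or entirely hanging off a branch), and verify in each configuration that the three allowed removals span enough distinct $r$-values to make cancellation possible. Once this table is in place, Corollary~\ref{prop_f_remove} together with Corollary~\ref{coro_f_completion} provides a consistency check confirming that the resulting type $\tilde{\bbA}$ gentle quiver, together with its isolated broken-triangle relations, does indeed complete to $\Lambda$, establishing~(2).
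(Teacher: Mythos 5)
Your handling of (2)$\implies$(3) and of (1)$\Leftrightarrow$(2) is essentially the paper's: both directions are disposed of by appeal to the external classifications (the paper cites~\cite{ABSch} together with the computations in~\cite{ABChJP}*{Section~3} for (1)$\implies$(2)). The divergence is in the remaining direction, and that is where the gap lies. The paper does not prove (3)$\implies$(2) combinatorially at all: it proves (3)$\implies$(1) in one line, by observing that the completed quiver $\bDelta^{\mathrm{c}}$ belongs to the explicitly described class of quivers in~\cite{Ba}*{Section~3}, i.e.\ to the known list of quivers of cluster tilted algebras of type $\tilde{\bbA}$; condition (2) then follows around the cycle of implications. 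You replace this citation by a new combinatorial argument whose only substantive step is left as a plan.

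Two things are genuinely missing from that plan. First, you must know that the target distribution of triangles between the two orbits is admissible: writing $f_{\bDelta^{\mathrm{c}}} = m\cdot[0,3] + [p_1,q_1] + [p_2,q_2]$, the choice of removals you are aiming for must satisfy $n_i = p_i - q_i$, so you need $p_i - q_i \geq 0$ for both $i$. This is exactly the content of Corollary~\ref{corollary_cluster_A_tilde}, which the paper \emph{deduces from} the proposition you are proving, so you cannot quote it; unwinding it via Corollary~\ref{coro_f_completion} reduces it to $-k_1 \leq r \leq k_2$, where $k_i$ is the number of relations of $\bDelta$ lying in the $i$-th orbit of $\calN_{\bDelta}/\bbZ$ --- checkable when $\Delta$ has no branch arrows, but needing an argument for isolated branch relations. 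Second, and more seriously, the toggle is not local in the way you assume: changing which arrow of a single triangle is retained can change which edges form the unique cycle of the resulting 1-cycle quiver (for an oriented $3$-cycle with one isolated relation, one uncompletion has the original $3$-cycle as its cycle, another has a $2$-cycle of parallel arrows plus a branch), so the clockwise/anticlockwise labels of \emph{all} other arrows and relations are reassigned, and the combinatorial reading of $r$ from Proposition~\ref{prop_interpretation} is in any case only available in the branch-free situation. Corollary~\ref{prop_f_remove} only asserts that \emph{some} distribution $(m_1,\dots,m_n)$ occurs for each choice; identifying which one requires tracking the orbit structure through Lemma~\ref{lemma_completion}, which is precisely the case analysis you defer. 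Until that table is produced and shown to realize the required value of $n_1$, the proof is incomplete; the appeal to~\cite{Ba}*{Section~3} is what does this work in the paper.
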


\begin{proof}
Follows from~\cite{ABSch} (and calculations
in~\cite{ABChJP}*{Section~3}).

(2)$\implies$(3) Obvious.

(3)$\implies$(1) Let $\bDelta'$ be the quiver obtained from
$\bDelta$ by completing the relations from $R$. Then $\Delta'$
belongs to the class of quivers considered
in~\cite{Ba}*{Section~3}, which implies that $\Lambda$ is cluster
tilted of type $\tilde{\bbA}$.
\end{proof}

We warn the reader that if $\bDelta$ is a 1-cycle gentle quiver
such that $R$ consists of isolated relations, then the quiver
obtained from $\bDelta$ by completing the relations from $R$ may
not be gentle, as the example from
Section~\ref{section_completion} shows.

Again, we have the following immediate consequence of the above
proposition, Proposition~\ref{prop_typeAtilde}, and
Corollary~\ref{coro_f_completion}.

\begin{corollary} \label{corollary_cluster_A_tilde}
If $\bDelta$ is a gentle quiver such that its path algebra is
derived equivalent to a cluster tilted algebra of type
$\tilde{\bbA}$, then
\[
f_{\bDelta} = (m_1 + m_2) \cdot [0, 3] + [p + m_1, p] + [q + m_2,
q]
\]
for some $m_1, m_2, p, q \in \bbN$ such that $p + m_1 > 0$ and $q
+ m_2 > 0$.
\end{corollary}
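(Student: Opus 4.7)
The plan is to combine Proposition~\ref{proposition_cluster_A_tilde}, Proposition~\ref{prop_typeAtilde} and Corollary~\ref{coro_f_completion}, and to close with the derived invariance of $f$ given by Theorem~\ref{theorem_AAG}. Concretely, one passes from $\bDelta$ to an explicit presentation of the cluster tilted algebra of type $\tilde{\bbA}$ to which $k\bDelta$ is derived equivalent, computes the invariant there via the completion calculus, and then transports the equality back to $\bDelta$.

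First, let $\Lambda$ be a cluster tilted algebra of type $\tilde{\bbA}$ derived equivalent to $k\bDelta$. Proposition~\ref{proposition_cluster_A_tilde}(2) supplies a gentle quiver $\bDelta_0$ of type $\tilde{\bbA}$ whose relation set $R_0$ consists of isolated relations, together with an isomorphism $\Lambda \cong k\bDelta_1$, where $\bDelta_1$ denotes the quiver obtained from $\bDelta_0$ by completing all relations in $R_0$. Since $\Lambda$ is gentle by Theorem~\ref{theorem_ABChJP}, the completed quiver $\bDelta_1$ is itself gentle and Corollary~\ref{coro_f_completion} is available. Proposition~\ref{prop_typeAtilde} then gives $f_{\bDelta_0} = [p', p'] + [q', q']$ for some $p', q' \in \bbN$; and actually $p', q' \geq 1$, because any orbit $\calO \in \calN_{\bDelta_0}/\bbZ$ satisfies $p(\calO) = |\calO| \geq 1$ and any orbit $\calO \in \calC_{\bDelta_0}/\bbZ$ satisfies $q(\calO) = |\calO| \geq 1$, so no orbit of either kind can contribute a value at $(0,0)$. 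In particular, both summands of $f_{\bDelta_0}$ arise from orbits $\calO_1, \calO_2 \in \calN_{\bDelta_0}/\bbZ$.

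Set $m_i := |R_0 \cap \calO_i|$ for $i = 1, 2$. Since every isolated relation lies in $\calN_{\bDelta_0}$, the set $R_0$ meets no $\calC$-orbit, and Corollary~\ref{coro_f_completion} yields
\[
f_{\bDelta_1} = (m_1 + m_2) \cdot [0, 3] + [p' - m_1, p' - 2 m_1] + [q' - m_2, q' - 2 m_2].
\]
Reparameterising by $p := p' - 2 m_1$ and $q := q' - 2 m_2$ recasts the right-hand side as $(m_1 + m_2) \cdot [0, 3] + [p + m_1, p] + [q + m_2, q]$. Each completed relation has length exactly two, so $p, q \geq 0$; and the gentleness of $\bDelta_1$ (via the corollary following Lemma~\ref{lemma_completion}) rules out $\calO_i \subset R_0$, so $m_i < |\calO_i| = p'$ (resp.\ $q'$), which forces $p + m_1 > 0$ and $q + m_2 > 0$. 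Finally, applying Theorem~\ref{theorem_AAG} to the derived equivalence between $k\bDelta$ and $k\bDelta_1 \cong \Lambda$ delivers $f_{\bDelta} = f_{\bDelta_1}$, matching the desired shape.

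No step is a genuine obstacle; the only care required is in verifying that both summands of $f_{\bDelta_0}$ come from orbits in $\calN_{\bDelta_0}/\bbZ$ rather than $\calC_{\bDelta_0}/\bbZ$ and that the shifted indices remain non-negative after applying Corollary~\ref{coro_f_completion}.
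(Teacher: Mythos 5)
Your proof is correct and follows exactly the route the paper intends: the paper gives no written argument, stating only that the corollary is an immediate consequence of Proposition~\ref{proposition_cluster_A_tilde}, Proposition~\ref{prop_typeAtilde} and Corollary~\ref{coro_f_completion} (with Theorem~\ref{theorem_AAG} implicitly transporting the invariant along the derived equivalence). Your write-up merely fills in the bookkeeping --- that both summands of $f_{\bDelta_0}$ come from $\calN$-orbits and that the shifted indices stay non-negative while $p+m_1$ and $q+m_2$ stay positive --- which is precisely the verification the paper leaves to the reader.
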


Finally, the following theorem is a consequence of~\cite{Ba}*{the
proof of Theorem~5.5}.

\begin{theorem} \label{theorem_BB_A_tilde}
Let $\Lambda$ and $\Lambda'$ be cluster tilted algebras of type
$\tilde{\bbA}$. Then the following conditions are equivalent:
\begin{enumerate}

\item
$\Lambda$ and $\Lambda'$ are derived equivalent.

\item
$\Lambda$ and $\Lambda'$ are BB-equivalent.

\item
$f_\Lambda = f_{\Lambda'}$.

\end{enumerate}
\end{theorem}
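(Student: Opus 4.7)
The plan is to establish the chain $(2) \Rightarrow (1) \Rightarrow (3) \Rightarrow (2)$. The first two implications are essentially formal. For $(2) \Rightarrow (1)$ I would invoke the remark at the end of Section~\ref{section_tilting}: by definition every BB-(co)tilting module is a tilting module, so each step of a BB-equivalence yields a derived equivalence, and these compose. For $(1) \Rightarrow (3)$ I would cite Theorem~\ref{theorem_AAG}, which applies because Theorem~\ref{theorem_ABChJP} guarantees that cluster tilted algebras of type $\tilde{\bbA}$ are gentle.

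The substantive content lies in $(3) \Rightarrow (2)$. First, by Proposition~\ref{proposition_cluster_A_tilde} I would realize each of $\Lambda$ and $\Lambda'$ as the path algebra of a quiver obtained from a 1-cycle gentle quiver by completing a set $R_0$ of isolated relations. By Corollary~\ref{corollary_cluster_A_tilde}, the invariant $f_\Lambda$ takes the form
\[
(m_1 + m_2) \cdot [0, 3] + [p + m_1, p] + [q + m_2, q],
\]
and from it one can recover $m_1 + m_2$ (the coefficient of $[0, 3]$) together with the unordered pair $\{(p + m_1, p), (q + m_2, q)\}$. Hence $f_\Lambda = f_{\Lambda'}$ means that the underlying data $(m_1, m_2, p, q)$ of the completion construction agree for $\Lambda$ and $\Lambda'$ (up to swapping the two cycle orbits), and the implication reduces to showing that any two cluster tilted algebras of type $\tilde{\bbA}$ with matching numerical invariants are BB-equivalent.

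To conclude I would appeal to \cite{Ba}*{Theorem~5.5}: its proof produces, between any two cluster tilted algebras of type $\tilde{\bbA}$ in the same derived equivalence class, an explicit chain of derived equivalences, each step being a reflection at a vertex of the bound quiver. In analogy with the argument used for Theorem~\ref{theorem_BB_A}, the remaining task is to verify that each such reflection coincides with the reflection operation defined in Section~\ref{section_tilting}, whose associated tilting module is precisely the BB-tilting module at that vertex. Once this identification is made, the derived equivalence between $\Lambda$ and $\Lambda'$ decomposes as a sequence of BB-(co)tilting steps, witnessing BB-equivalence.

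The main obstacle I anticipate is this last identification. One must check, by a case analysis on the local configuration of arrows and relations at the reflection vertex (and in particular at vertices on the unique oriented cycle of $\Delta$), that every reflection used by Barot fits into the framework of Section~\ref{section_tilting}, and that no derived equivalence outside this class enters the argument. The check is combinatorial rather than deep, but is exactly the point at which an honest verification needs to be carried out.
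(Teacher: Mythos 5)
Your proposal is correct and rests on the same two inputs as the paper's proof, namely that Barot's classification shows $f$ separates derived equivalence classes of cluster tilted algebras of type $\tilde{\bbA}$, and that all derived equivalences used there are realized by the (co)reflections of Section~\ref{section_tilting}; the only difference is that you close the cycle as $(2)\Rightarrow(1)\Rightarrow(3)\Rightarrow(2)$ while the paper uses $(1)\Rightarrow(2)\Rightarrow(3)\Rightarrow(1)$, which is immaterial. The verification you flag at the end is exactly the check the paper also defers to (``one has to check again that all derived equivalences used in~\cite{Ba} are (co)reflections'').
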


\begin{proof}
For the proof of implication (1)$\implies$(2) one has to check
again that all derived equivalences used in~\cite{Ba} are
(co)reflections. Next, implication (2)$\implies$(3) is obvious.
Finally, in the proof of~\cite{Ba}*{Theorem~5.5} the author shows
that $f_\Lambda \neq f_{\Lambda'}$ for cluster tilted algebras
$\Lambda$ and $\Lambda'$ of type $\tilde{\bbA}$ which are not in
the same derived equivalence class, hence the implication
(3)$\implies$(1) follows.
\end{proof}

Now we formulate the main results of the paper.

\begin{maintheorem} \label{main_A}
Let $\Lambda$ be a gentle algebras. Then $\Lambda$ is derived
equivalent to a cluster tilted algebra of type $\bbA$ if and only
if
\[
f_{\bDelta} = m \cdot [0, 3] + [p + m + 2, p]
\]
for some $m, p \in \bbN$.
\end{maintheorem}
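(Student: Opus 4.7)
The forward direction is immediate from Corollary~\ref{corollary_cluster_A}. For the converse, suppose $f_{\bDelta} = m\cdot[0,3] + [p+m+2, p]$; reading the invariant orbit by orbit tells us that $\calC_{\bDelta}/\bbZ$ consists of exactly $m$ triangles (from the summand $m\cdot[0,3]$), while $\calN_{\bDelta}/\bbZ$ consists of a single orbit. The overall strategy is to factor $\bDelta$ as a completion of a tree-type gentle quiver at isolated relations, then identify a cluster tilted algebra of type $\bbA$ with the same $f$-invariant and connect the two by BB-equivalence.

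I would begin by selecting one arrow $\alpha_i$ from each of the $m$ triangles and setting $\bDelta' := \bDelta \setminus \{\alpha_1,\dots,\alpha_m\}$. Corollary~\ref{prop_f_remove} yields that $\bDelta'$ is a gentle quiver with $f_{\bDelta'} = [p+2m+2,\, p+2m]$, so Proposition~\ref{prop_gentle_A} identifies $\bDelta'$ as a gentle quiver of tree type. The uniqueness clauses in the gentle axioms~(2)--(3) force each surviving triangle relation $(\beta_i,\gamma_i)$ to be isolated in $\bDelta'$: any attempted antipath extension of $(\beta_i,\gamma_i)$ in $\bDelta$ proceeded uniquely through $\alpha_i$, and that arrow has been deleted. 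Thus $\bDelta$ is obtained from the tree-type quiver $\bDelta'$ by completing (in the sense of Section~\ref{section_completion}) the isolated set $\{(\beta_i,\gamma_i)\}$.

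In general $\bDelta'$ will carry additional, non-isolated relations, so Proposition~\ref{proposition_cluster_A} cannot be applied to $\bDelta'$ directly; it only allows us to hope that $k\bDelta$ is derived equivalent to, rather than isomorphic to, a cluster tilted algebra. To realise such a derived equivalence concretely I would build a companion cluster tilted algebra $C$ of type $\bbA$ with $f_C = f_{\bDelta}$ by starting from a tree-type gentle quiver $\bDelta_0$ on $p+2m+1$ vertices whose $m$ relations are all isolated and completing them (Corollary~\ref{coro_f_completion} confirms that $f_C = m\cdot[0,3] + [p+m+2,p]$). It then remains to prove $k\bDelta \sim_{\mathrm{BB}} C$. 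I would attack this by induction on the number of non-isolated relations of $\bDelta'$: use~\cite{AH} and the argument behind Theorem~\ref{theorem_BB_A} to realise a tree-type derived equivalence $\bDelta' \sim \bDelta_0$ by a sequence of BB-reflections, and lift each reflection to a compatible BB-reflection on the completed quiver $\bDelta$, tracking the triangles through the process.

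\textbf{Main obstacle.} The technical heart of the argument is this lifting step. One must verify that a BB-reflection at a vertex $x$ of $\bDelta'$ extends to a BB-reflection at $x$ of $\bDelta$, that each triangle of $\bDelta$ is transported to a triangle of the reflected quiver, and that the resulting quiver is again the completion of a tree-type gentle quiver along an isolated family of relations. This reduces to a bounded local case analysis near $x$: classify how a triangle of $\bDelta$ can sit relative to the reflection locus, and check in each case that applying the reflection recipe of Section~\ref{section_tilting} to $\bDelta$ agrees with completing the reflection recipe applied to $\bDelta'$. Once this compatibility is established, iterating yields the BB-equivalence $k\bDelta \sim C$ and hence the desired derived equivalence, confirming en route the paper's assertion that the derived equivalences in this class are generated by BB-(co)tilting.
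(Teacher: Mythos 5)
Your forward direction and the initial reduction are fine: removing one arrow from each of the $m$ triangles does produce a tree-type model $\bDelta'$ with $f_{\bDelta'}=[p+2m+2,\,p+2m]$, the surviving relations $(\beta_i,\gamma_i)$ are indeed isolated in $\bDelta'$ by the uniqueness clauses of gentleness, and $\bDelta$ is the completion of $\bDelta'$ along them. This matches the paper's Lemma~\ref{lemma_BV}. The problem is everything after that.

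The step you yourself flag as the ``main obstacle'' is a genuine gap, and not one that a bounded local case analysis will close as stated. A reflection at a vertex $x$ is admissible only if every arrow starting at $x$ has a non-relation partner terminating at $x$. This condition is checked in $\bDelta$, not in $\bDelta'$: completing a relation $(\beta_i,\gamma_i)$ inserts a new arrow $\gamma_{\rho}$, and at a vertex of the resulting triangle there is an arrow starting at $x$ whose unique relation-free predecessor may simply not exist in $\bDelta$ even though the reflection at $x$ is perfectly admissible in the model $\bDelta'$. So a sequence of BB-reflections realising $\bDelta'\sim\bDelta_0$ (which itself would need justification from~\cite{AH} --- Theorem~\ref{theorem_BB_A} concerns cluster tilted algebras, not arbitrary tree-type gentle quivers) need not lift move by move to $\bDelta$; you give no mechanism for choosing a liftable sequence, and your ``induction on the number of non-isolated relations of $\bDelta'$'' is never shown to decrease under any reflection. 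The paper avoids this entirely: it never lifts anything from the model. Instead it characterises the cluster tilted members of $\calA$ intrinsically as those without \emph{branch relations} (relations whose arrows lie outside $\calC_{\bDelta}$), and then eliminates branch relations by reflections performed directly on $\bDelta$, choosing at each step a branch relation $(\alpha,\beta)$ minimising the component-size invariant $n^{\bDelta}_{(\alpha,\beta)}$; this minimality is exactly what guarantees the reflection at $t\alpha$ is admissible and that either the number of branch relations or the invariant $n_{\bDelta}$ strictly drops. To repair your argument you would either have to reprove that admissibility/progress analysis on the completed quiver (at which point you have reproduced the paper's proof), or supply a new argument that a liftable reflection sequence always exists; as written, the converse direction is not established.
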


\begin{maintheorem} \label{main_B}
Let $\Lambda$ be a gentle algebras. Then $\Lambda$ is derived
equivalent to a cluster tilted algebra of type $\tilde{\bbA}$ if
and only if
\[
f_{\bDelta} = (m_1 + m_2) \cdot [0, 3] + [p + m_1, p] + [q + m_2,
q]
\]
for some $m_1, m_2, p, q \in \bbN$ such that $p + m_1 > 0$ and $q
+ m_2 > 0$.
\end{maintheorem}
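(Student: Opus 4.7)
The forward direction is immediate: if $\Lambda$ is derived equivalent to a cluster tilted algebra of type $\tilde{\bbA}$, then Corollary~\ref{corollary_cluster_A_tilde} together with Theorem~\ref{theorem_AAG} forces $f_\Lambda$ into the stated form.

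For the backward direction, my plan is to produce an explicit cluster tilted algebra $\Lambda_0$ of type $\tilde{\bbA}$ with $f_{\Lambda_0} = f_\Lambda$, and then show $\Lambda$ and $\Lambda_0$ are derived equivalent (in fact BB-equivalent). The existence of such a $\Lambda_0$ is provided by Proposition~\ref{proposition_cluster_A_tilde}: take a cycle $\bDelta^\circ$ with $p + m_1$ clockwise and $q + m_2$ anticlockwise arrows, equip it with $m_1$ isolated clockwise relations and $m_2$ isolated anticlockwise relations, and let $\Lambda_0$ be the path algebra of the resulting completion. The hypotheses $p + m_1 > 0$ and $q + m_2 > 0$ ensure the underlying cycle is nontrivial; one then verifies via Proposition~\ref{prop_interpretation} and Corollary~\ref{coro_f_completion} that $f_{\Lambda_0}$ equals $(m_1+m_2) \cdot [0,3] + [p+m_1,p] + [q+m_2,q]$.

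To show $\Lambda$ is derived equivalent to $\Lambda_0$, I would argue by induction on $m_1 + m_2$. In the base case $m_1 = m_2 = 0$, one has $f_\Lambda = [p,p] + [q,q]$, so Proposition~\ref{prop_1_cycle} identifies any gentle presentation $\bDelta$ of $\Lambda$ as a 1-cycle gentle quiver and Proposition~\ref{prop_typeAtilde} identifies $\Lambda$ as being of type $\tilde{\bbA}$; the task reduces to BB-tilting $\bDelta$ to a 1-cycle gentle quiver whose relations are all isolated, which by Proposition~\ref{proposition_cluster_A_tilde} is then a cluster tilted algebra of type $\tilde{\bbA}$. For the inductive step, each of the $m_1+m_2$ triangles in $f_\Lambda$ should correspond to a triangle-shaped subconfiguration in $\bDelta$ which, via a BB-reflection at its middle vertex, can be traded for an isolated relation on a cycle---effectively reversing the completion procedure of Section~\ref{section_completion}; this strictly decreases the triangle count and the inductive hypothesis applies.

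The main obstacle is the detailed combinatorial analysis of BB-reflections. One must, at each stage, identify a vertex at which a BB-(co)reflection produces a quiver structurally closer to the target normal form, and verify that an appropriate complexity measure strictly decreases while the $f$-invariant is preserved (automatic, being a derived equivalence). In the base case, one must separate any consecutive pair of relations around the cycle into isolated relations without destroying the 1-cycle structure, which is delicate because BB-tilts can interact nontrivially with the global cyclic topology, in particular with branch arrows attached to the cycle. Once this analysis is carried out---presumably the content of the last four sections of the paper---Theorem~\ref{theorem_BB_A_tilde} combined with the matching of $f$-invariants between $\Lambda$ and $\Lambda_0$ closes the argument.
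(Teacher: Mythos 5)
The forward direction is fine, and you are right that the content of the theorem is a reduction by BB-(co)reflections. But the organizing principle of your backward direction --- induction on $m_1+m_2$ in which a triangle is ``traded for an isolated relation on a cycle'' by a BB-reflection, ``effectively reversing the completion procedure'' and strictly decreasing the triangle count --- cannot work. For a quiver $\bDelta \in \tilde{\calA}$ every orbit in $\calC_{\bDelta}/\bbZ$ is a triangle, so the number of triangles equals $f_{\bDelta}(0,3) = m_1+m_2$, which by Theorem~\ref{theorem_AAG} is invariant under any derived equivalence, in particular under BB-tilting; the triangle count can never decrease. Concretely, a BB-reflection fixes $\Delta_0$ and $\Delta_1$ as sets and only re-attaches arrows and relations, whereas un-completing an isolated relation deletes an arrow, so no sequence of reflections can ``reverse the completion procedure.'' Your inductive step therefore has no valid first move whenever $m_1+m_2>0$, which is exactly the nontrivial case.

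The paper's proof of Proposition~\ref{proposition_A_tilde} keeps all triangles throughout and instead normalizes everything around them: it first removes branch relations (the argument of Proposition~\ref{prop_no_branch_rel_A} with a modified complexity measure), then removes branch arrows and branch \emph{triangles} --- note this means sliding them onto the central cycle so that they become cycle triangles, not deleting them --- and finally removes the free relations (Proposition~\ref{prop_finalreduction Atilde}); the orientation argument via Proposition~\ref{prop_interpretation} shows a quiver in $\tilde{\calA}$ with no branch data has no room left for free relations unless they can be absorbed. At that point $\bDelta$ is literally the completion of its standard model, hence cluster tilted of type $\tilde{\bbA}$ by Proposition~\ref{proposition_cluster_A_tilde}, and no comparison with a prefabricated $\Lambda_0$ is needed (that comparison, via Theorem~\ref{theorem_BB_A_tilde}, is only used later for Theorem~\ref{main_D}). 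Your base case $m_1=m_2=0$ is essentially correct, but the heart of the theorem is the case you have deferred to a mechanism that the derived invariant itself forbids.
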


Taking into account Corollary~\ref{corollary_ABChJP} we
immediately get the following.

\begin{maincorollary}
Let $\Lambda$ be a gentle algebras. Then $\Lambda$ is derived
equivalent to a cluster tilted algebra if and only if either
\[
f_{\bDelta} = m \cdot [0, 3] + [p + m + 2, p]
\]
for some $m, p \in \bbN$, or
\[
f_{\bDelta} = (m_1 + m_2) \cdot [0, 3] + [p + m_1, p] + [q + m_2,
q]
\]
for some $m_1, m_2, p, q \in \bbN$ such that $p + m_1 > 0$ and $q
+ m_2 > 0$.
\end{maincorollary}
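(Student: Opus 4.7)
The plan is to deduce this corollary directly by combining Main Theorems~\ref{main_A} and~\ref{main_B} with Corollary~\ref{corollary_ABChJP}; no new argument beyond bookkeeping on which of the two types occurs is required.

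First I would dispose of the sufficiency (``if'') direction. Suppose $f_\bDelta$ takes one of the two listed forms. If $f_\bDelta = m \cdot [0,3] + [p+m+2, p]$ for some $m, p \in \bbN$, then Main Theorem~\ref{main_A} immediately yields that $\Lambda$ is derived equivalent to a cluster tilted algebra of type $\bbA$, which is in particular a cluster tilted algebra. Otherwise $f_\bDelta = (m_1 + m_2) \cdot [0,3] + [p + m_1, p] + [q + m_2, q]$ with $p + m_1 > 0$ and $q + m_2 > 0$; then Main Theorem~\ref{main_B} yields that $\Lambda$ is derived equivalent to a cluster tilted algebra of type $\tilde{\bbA}$. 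In either subcase the conclusion is established.

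For the necessity (``only if'') direction, assume $\Lambda$ is gentle and derived equivalent to a cluster tilted algebra $C$ of some type $Q$. Corollary~\ref{corollary_ABChJP}, which feeds on the fact that gentleness is preserved under derived equivalence together with Theorem~\ref{theorem_ABChJP}, forces $Q$ to be either of Dynkin type $\bbA$ or of Euclidean type $\tilde{\bbA}$. In the first case, applying the ``only if'' direction of Main Theorem~\ref{main_A} to $\Lambda$ (which is derived equivalent to a cluster tilted algebra of type $\bbA$) supplies the first displayed formula for $f_\bDelta = f_\Lambda$; in the second case, the ``only if'' direction of Main Theorem~\ref{main_B} supplies the second.

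I do not anticipate any genuine obstacle here, since the entire substantive work has been localized inside Main Theorems~\ref{main_A} and~\ref{main_B}, which account respectively for the $\bbA$ and $\tilde{\bbA}$ cases, and Corollary~\ref{corollary_ABChJP} is precisely what guarantees that these two cases exhaust the possibilities once a gentle derived equivalence class is known to contain a cluster tilted algebra. The corollary is thus a pure packaging statement.
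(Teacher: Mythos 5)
Your argument is exactly the paper's: the corollary is stated there as an immediate consequence of Theorems~\ref{main_A} and~\ref{main_B} together with Corollary~\ref{corollary_ABChJP}, which is precisely the case split you carry out. The proposal is correct and matches the paper's route.
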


Moreover, we show the following.

\begin{maintheorem} \label{main_D}
Let $\Lambda$ and $\Lambda'$ be gentle algebras derived equivalent
to cluster tilted algebras. Then the following conditions are
equivalent:
\begin{enumerate}

\item
$\Lambda$ and $\Lambda'$ are derived equivalent.

\item
$\Lambda$ and $\Lambda'$ are BB-equivalent.

\item
$f_\Lambda = f_{\Lambda'}$.

\end{enumerate}
\end{maintheorem}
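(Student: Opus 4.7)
The implication $(2) \Rightarrow (1)$ is immediate from the definition, since any single BB-tilting yields a derived equivalent algebra; and $(1) \Rightarrow (3)$ is Theorem~\ref{theorem_AAG}. The substance of the theorem is therefore $(3) \Rightarrow (2)$.

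My plan for $(3) \Rightarrow (2)$ is to bootstrap from the corresponding equivalences for cluster tilted algebras themselves (Theorems~\ref{theorem_BB_A} and~\ref{theorem_BB_A_tilde}). The key auxiliary statement I would establish is: \emph{every gentle algebra derived equivalent to a cluster tilted algebra is BB-equivalent to a cluster tilted algebra of the same type}. Granted this, suppose $f_\Lambda = f_{\Lambda'}$. By Main Theorems~\ref{main_A} and~\ref{main_B}, $\Lambda$ and $\Lambda'$ are derived equivalent to cluster tilted algebras of a common type ($\bbA$ or $\tilde{\bbA}$, read off from the shape of $f_\Lambda$). Choose cluster tilted algebras $C$ and $C'$ of that type with $\Lambda$ BB-equivalent to $C$ and $\Lambda'$ BB-equivalent to $C'$. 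Since BB-equivalence implies derived equivalence, $C$ and $C'$ are derived equivalent to each other, hence $f_C = f_{C'}$ by Theorem~\ref{theorem_AAG}. Then Theorem~\ref{theorem_BB_A} (type $\bbA$) or Theorem~\ref{theorem_BB_A_tilde} (type $\tilde{\bbA}$) yields $C$ BB-equivalent to $C'$, and transitivity of BB-equivalence concludes.

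To prove the auxiliary statement for $\Lambda = k \bDelta$ with, say, $f_{\bDelta} = m \cdot [0, 3] + [p + m + 2, p]$, Proposition~\ref{proposition_cluster_A} together with Corollary~\ref{prop_f_remove} identifies the target quivers, for cluster tilted algebras of type $\bbA$ with this invariant, as those obtained by completing $m$ isolated relations in a tree-type gentle quiver whose own invariant is $[p + m + 2, p]$. The plan is to produce, via a sequence of BB-(co)tiltings, a reduction of $\bDelta$ to such a ``triangles-over-tree'' shape: locate the $m$ triangles contributing the $m \cdot [0, 3]$ summand, perform local BB-moves at vertices on the periphery of each triangle to separate it from the surrounding tree into the standard completed-relation configuration, and then normalize the resulting tree using Theorem~\ref{theorem_BB_A}. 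The $\tilde{\bbA}$ case is parallel, using Proposition~\ref{proposition_cluster_A_tilde} and Proposition~\ref{prop_interpretation}, with the extra constraint that the unique cycle be preserved and the clockwise/anticlockwise relation parameter $r$ of Proposition~\ref{prop_interpretation} correctly matched.

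The main obstacle will be to verify that in each configuration encountered along the reduction, the vertex at which one wishes to reflect actually satisfies the existence condition from Section~\ref{section_tilting}—namely, that for each outgoing arrow $\alpha$ at the chosen vertex $x$ there is a unique $\beta_\alpha$ ending at $x$ with $(\alpha, \beta_\alpha) \notin R$ (or the dual). This requires a careful case analysis keyed to the local combinatorics of a triangle sitting inside a gentle quiver of the prescribed $f_\bDelta$-shape, combined with a complexity measure (such as the number of triangles not yet in standard position, or the number of misplaced branch arrows) to guarantee termination of the reduction at a cluster tilted algebra.
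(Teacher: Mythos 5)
Your overall strategy coincides with the paper's: reduce to cluster tilted representatives via the auxiliary statement (which is exactly Propositions~\ref{proposition_A} and~\ref{proposition_A_tilde}, already needed for Theorems~\ref{main_A} and~\ref{main_B}, so you need not re-prove them here) and then transfer the three equivalences along BB-equivalences. However, there is a genuine gap in your step from $f_C = f_{C'}$ to ``$C$ BB-equivalent to $C'$'' in the type $\bbA$ case. Theorem~\ref{theorem_BB_A} only asserts the equivalence of derived equivalence and BB-equivalence for cluster tilted algebras of type $\bbA$; unlike Theorem~\ref{theorem_BB_A_tilde}, it does \emph{not} include the condition $f_C = f_{C'}$ among its equivalent conditions. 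So from $f_C = f_{C'}$ alone you cannot invoke it: you first need to know that the Avella-Alaminos--Geiss invariant is a \emph{complete} derived invariant on the class of cluster tilted algebras of type $\bbA$.

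This is precisely the ``missing ingredient'' the paper supplies in Section~\ref{section_D}: Proposition~\ref{proposition_BB_A} shows that for $\bDelta \in \calA$ one has $f_{\bDelta} = m \cdot [0,3] + [|\Delta_0| + 1 - m, |\Delta_0| - 1 - 2m]$ where $m$ is the number of $3$-cycles in $\Delta$ (via a bijection between triangles and $3$-cycles, using the model construction), so that $f_{\bDelta}$ recovers both $|\Delta_0|$ and $m$; combined with the derived equivalence classification of~\cite{BV} this gives the needed corollary that cluster tilted algebras of type $\bbA$ with equal invariants are derived equivalent. Your argument goes through verbatim once this proposition is inserted (and once one notes, as both you and the paper do implicitly, that the shape of $f_\Lambda$ determines whether the type is $\bbA$ or $\tilde{\bbA}$, since the two normal forms of the invariant cannot coincide). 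Without it, the type $\bbA$ branch of your implication $(3) \Rightarrow (2)$ does not close.
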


Recall that an algebra $\Lambda$ is called Gorenstein if the
injective dimensions of $\Lambda$ both as a left and as a right
$\Lambda$-module are finite. If this is the case, then these
dimensions coincide~\cite{Hap}*{Lemma~1.2}, and this common value
is called the Gorenstein dimension $\Gdim \Lambda$ of $\Lambda$.
Both the gentle and the cluster tilted algebras are
Gorenstein~\cites{GR, KR}. Moreover, in the case of cluster tilted
algebras we have the following~\cite{KR}.

\begin{theorem} \label{theo_Keller_Reiten}
Let $\Lambda$ be a cluster tilted algebra. Then $\Gdim \Lambda
\leq 1$.
\end{theorem}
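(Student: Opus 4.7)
The plan is to follow the Keller--Reiten argument, working inside the cluster category. Realize $\Lambda = \End_{\calC}(T)$, where $T$ is a cluster tilting object in the $2$-Calabi--Yau cluster category $\calC = \calC_{k Q}$. The functor $F := \Hom_{\calC}(T,-) \colon \calC \to \mod \Lambda$ induces the well-known equivalence $\calC / (\Sigma T) \simeq \mod \Lambda$, and it sends the indecomposable summands $T_i$ of $T$ to the indecomposable projective $\Lambda$-modules $P_i = F(T_i)$.

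First I would identify the indecomposable injective $\Lambda$-modules in terms of $\calC$. Using that $F$ is fully faithful on $\operatorname{add} T$, one computes $\Hom_\Lambda(P_i, \Lambda) \simeq \Hom_{\calC}(T_i, T)$ as a left $\Lambda$-module, so the Nakayama module satisfies
\[
\nu_\Lambda P_i \;=\; D \Hom_{\calC}(T_i, T) \;\simeq\; \Hom_{\calC}(T, \Sigma^2 T_i) \;=\; F(\Sigma^2 T_i),
\]
where the middle isomorphism is Serre duality for the $2$-Calabi--Yau structure. Hence the indecomposable injective $\Lambda$-modules are exactly $I_i := F(\Sigma^2 T_i)$.

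Next, I would use the cluster tilting property of $T$: for every object $X \in \calC$ there is a triangle
\[
T_1 \longrightarrow T_0 \longrightarrow X \longrightarrow \Sigma T_1
\]
with $T_0, T_1 \in \operatorname{add} T$. Applying this with $X = \Sigma^2 T_i$ and passing to the long exact sequence under $\Hom_{\calC}(T,-)$, the two outer terms $\Hom_{\calC}(T, \Sigma T_i) = \Ext^1_{\calC}(T, T_i)$ and $\Hom_{\calC}(T, \Sigma T_1) = \Ext^1_{\calC}(T, T_1)$ vanish by rigidity of $T$. This yields a short exact sequence
\[
0 \longrightarrow F(T_1) \longrightarrow F(T_0) \longrightarrow I_i \longrightarrow 0
\]
of $\Lambda$-modules, which is a projective resolution of $I_i$ of length $1$. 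Thus every indecomposable injective has projective dimension at most $1$; by the standard duality between injective dimension of projectives and projective dimension of injectives for finite-dimensional algebras (equivalently, running the symmetric argument on $\Lambda^{\operatorname{op}}$, which is also cluster tilted), every indecomposable projective also has injective dimension at most $1$, giving $\Gdim \Lambda \leq 1$.

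The principal difficulty is the precise identification $\nu_\Lambda P_i \simeq F(\Sigma^2 T_i)$: it depends on the $2$-Calabi--Yau Serre duality together with the full faithfulness of $F$ on $\operatorname{add} T$, and it is exactly here that the hypothesis on $\calC$ is essential. The rigidity of $T$ then plays a double role, vanishing both outer Ext-groups so that the image of the cluster tilting triangle under $F$ is short exact.
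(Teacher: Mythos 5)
Your argument is correct and is essentially the proof of Keller--Reiten, which is exactly what the paper relies on: Theorem~\ref{theo_Keller_Reiten} is stated there without proof, with a citation to \cite{KR}, and your sketch (Serre duality identifying $\nu_\Lambda P_i$ with $F(\Sigma^2 T_i)$, then the cluster tilting triangle giving a length-one projective resolution of each injective) reproduces that argument faithfully. The only wording to tighten is the appeal to a ``standard duality between injective dimension of projectives and projective dimension of injectives,'' which in general only controls the \emph{left} self-injective dimension of $\Lambda$; but you correctly supply the actual fix, namely running the same argument on $\Lambda^{\operatorname{op}}$ (which is again cluster tilted) and invoking \cite{Hap}*{Lemma~1.2} to conclude the two finite self-injective dimensions coincide.
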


The above property characterizes the gentle cluster tilted
algebras.

\begin{maintheorem} \label{main_E}
Let $\Lambda$ be a gentle algebra derived equivalent to a cluster
tilted algebra. Then $\Lambda$ is cluster titled if and only if
$\Gdim \Lambda \leq 1$.
\end{maintheorem}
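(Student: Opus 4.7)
The forward implication is immediate from Theorem~\ref{theo_Keller_Reiten}. For the converse, let $\Lambda = k\bDelta$ be a gentle algebra derived equivalent to a cluster tilted algebra, with $\Gdim \Lambda \leq 1$. The plan is to combine a combinatorial characterization of the Gorenstein condition for gentle algebras with the explicit form of $f_\bDelta$ to recover a cluster tilted presentation of $\Lambda$ via a decompletion procedure.

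The main argument, granted the Key Lemma below, runs as follows. By Main Theorems~\ref{main_A} and~\ref{main_B}, the only $[0, q]$ summands occurring in $f_\bDelta$ are copies of $[0, 3]$; hence every $\calC$-orbit of $\bDelta$ is a triangle, say $T_1, \ldots, T_k$ with $T_i = \{\alpha_i, \beta_i, \gamma_i\}$. By the Key Lemma every relation of $\bDelta$ lies in some $T_i$. Choosing one arrow $\alpha_i$ from each triangle and setting $\bDelta' := \bDelta \setminus \{\alpha_1, \ldots, \alpha_k\}$, Corollary~\ref{prop_f_remove} together with Propositions~\ref{prop_gentle_A} and~\ref{prop_typeAtilde} realize $\bDelta'$ as a gentle quiver of tree type (in the setting of Main Theorem~\ref{main_A}) or of type $\tilde{\bbA}$ (in the setting of Main Theorem~\ref{main_B}); in the latter case the arrow removed from each triangle must be chosen so that the induced isolated relation joins the correct one of the two $\calN$-orbits, which can be arranged by a local analysis of how each triangle attaches to the ambient $1$-cycle of $\bDelta'$. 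The relations of $\bDelta'$ are precisely the $(\beta_i, \gamma_i)$, and each is isolated in $\bDelta'$ because the gentle axioms force $\alpha_i$ to be the unique arrow of $\bDelta$ capable of extending $(\beta_i, \gamma_i)$ to a longer antipath, and it has been removed. Thus $\bDelta$ is the quiver obtained from $\bDelta'$ by completing its isolated relations, and Propositions~\ref{proposition_cluster_A} and~\ref{proposition_cluster_A_tilde} identify $\Lambda$ as cluster tilted of the appropriate type.

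The central ingredient, and the main obstacle, is the following \emph{Key Lemma}: for a gentle algebra $k\bDelta$, $\Gdim k\bDelta \leq 1$ if and only if $\bDelta$ has no maximal antipath of length $\geq 2$; equivalently, every relation of $\bDelta$ lies in some $\calC$-orbit. The inequality $\Gdim \Lambda \geq \ell(\omega)$ for a maximal antipath $\omega$ of length $\geq 2$ can be established by producing, from $\omega$, a simple $\Lambda$-module whose minimal injective coresolution has length at least $\ell(\omega)$; this uses the antipath to iteratively identify the non-injective injective envelope at each stage. The reverse direction, that the absence of maximal antipaths of length $\geq 2$ forces $\Gdim \Lambda \leq 1$, requires showing that each indecomposable projective is either injective or admits a two-term injective coresolution whose terms are governed by the (at most two) arrows ending at the corresponding vertex; this is carried out via the string description of minimal injective resolutions of simple modules over a gentle algebra, using the hypothesis to ensure that the coresolution terminates after one step.
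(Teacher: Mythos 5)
Your proposal is correct and follows essentially the same route as the paper: use the combinatorial criterion that $\Gdim \Lambda \leq 1$ forces every relation to lie in a triangle (equivalently, that no maximal antipath has length $\geq 2$), then undo the completion procedure to exhibit a tree-type or $1$-cycle gentle model all of whose relations are isolated and completed, and invoke Propositions~\ref{proposition_cluster_A} and~\ref{proposition_cluster_A_tilde}. The only substantive remark is that your ``Key Lemma,'' which you single out as the main obstacle, is exactly the quoted Geiss--Reiten theorem (Theorem~\ref{theo_Geiss_Reiten}) and needs no new proof, and that in the $\tilde{\bbA}$ case condition~(3) of Proposition~\ref{proposition_cluster_A_tilde} only asks for a $1$-cycle model, so the ``local analysis'' you propose to force the decompleted quiver into type $\tilde{\bbA}$ can be dispensed with.
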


\section{Proof of Theorem~\ref{main_A}} \label{section_A}

Let $\calA$ denote the class of the gentle quivers $\bDelta$ such
that
\[
f_{\bDelta} = m \cdot [0, 3] + [p + m + 2, p]
\]
for some $m, p \in \bbN$. Taking into account
Corollary~\ref{corollary_cluster_A}, the following proposition
will imply Theorem~\ref{main_A}.

\begin{proposition} \label{proposition_A}
If $\bDelta \in \calA$, then $k \bDelta$ is BB-equivalent to a
cluster tilted algebra of type $\bbA$.
\end{proposition}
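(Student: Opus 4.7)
The plan is to proceed by induction on $m$. The hypothesis $f_{\bDelta} = m \cdot [0, 3] + [p + m + 2, p]$ gives that $\bDelta$ has exactly $m$ triangles in $\calC_{\bDelta} / \bbZ$ and a single orbit in $\calN_{\bDelta} / \bbZ$ of cardinality $p + m + 2$ and total length $p$.

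For the base case $m = 0$, Proposition~\ref{prop_gentle_A} identifies $\bDelta$ as a gentle quiver of tree type. By~\cite{AH}, $k \bDelta$ is derived equivalent to the path algebra of a Dynkin quiver of type $\bbA$, which is itself cluster tilted of type $\bbA$ (take the empty set of isolated relations in Proposition~\ref{proposition_cluster_A}). To close the base case, I would verify that the derived equivalences used in~\cite{AH} can be realized as a sequence of BB-(co)tilts, analogous to the argument underlying Theorem~\ref{theorem_BB_A}.

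For the inductive step with $m \geq 1$, pick any triangle of $\bDelta$ with arrows $\alpha_0, \alpha_1, \alpha_2$ (and $(\alpha_1, \alpha_2) \in R$), and form $\bDelta_0 := \bDelta \setminus \{ \alpha_0 \}$. By Corollary~\ref{prop_f_remove} applied to this single removal, $\bDelta_0$ is gentle and lies in $\calA$ with $m - 1$ triangles (and new parameter $p + 2$). Moreover, condition~(3) in the definition of a gentle quiver forces $(\alpha_1, \alpha_2)$ to be an isolated relation in $\bDelta_0$: the removal of $\alpha_0$ eliminates the only arrow $\alpha'$ in $\bDelta$ with $(\alpha', \alpha_1) \in R$ or $(\alpha_2, \alpha') \in R$. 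In particular, $\bDelta$ is isomorphic to the completion of $\bDelta_0$ at $(\alpha_1, \alpha_2)$ in the sense of Section~\ref{section_completion}. The inductive hypothesis then provides a BB-equivalence $k \bDelta_0 \simeq \Lambda_0$ with $\Lambda_0$ cluster tilted of type $\bbA$, realized (by Proposition~\ref{proposition_cluster_A}) as the completion of a gentle tree-type quiver with isolated relations.

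The concluding step is a lifting argument: tracing the sequence of BB-(co)tilts realizing $k \bDelta_0 \simeq \Lambda_0$, I would show that each individual tilt lifts to a BB-(co)tilt of the current quiver completed at the (image of the) relation $(\alpha_1, \alpha_2)$. This yields a BB-equivalence between $k \bDelta$ and the analogous completion of $\Lambda_0$ by one additional isolated relation, which is again cluster tilted of type $\bbA$ by Proposition~\ref{proposition_cluster_A}. The main obstacle is precisely this lifting: verifying that each BB-(co)tilt commutes with the completion operation. I expect it is routine when the tilting vertex lies outside the endpoints of the completion arrow $\gamma_{(\alpha_1, \alpha_2)}$ (namely, $t \alpha_1$ and $s \alpha_2$), but additional bookkeeping, or reordering of the tilt sequence, will likely be needed when the tilting vertex coincides with one of them.
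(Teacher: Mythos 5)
There is a genuine gap, and it sits exactly where you flag it: the ``lifting'' step of your induction is not routine and is, in effect, the whole proof. A reflection at a vertex $x$ of $\bDelta_0$ is only defined when every arrow starting at $x$ admits a non-relation predecessor terminating at $x$, and this applicability condition is not stable under completion: if $x = t \alpha_1$, the completed quiver acquires the extra arrow $\gamma_{(\alpha_1, \alpha_2)}$ starting at $x$ with $(\gamma_{(\alpha_1, \alpha_2)}, \alpha_1) \in R$, so a reflection that is legal in $\bDelta_0$ may be illegal in $\bDelta$, and conversely. Worse, even for reflections away from $t \alpha_1$ and $s \alpha_2$, the relation $(\alpha_1, \alpha_2)$ is not simply carried along: reflections destroy and create relations, so after one step of the sequence the pair you wish to complete need not be isolated any more, and ``the completion of the current quiver at the image of $(\alpha_1, \alpha_2)$'' is not well defined along the sequence without a substantial commutation-and-reordering argument that you have not given. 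Your base case has the same character: the assertion that the derived equivalences of \cite{AH} between tree-type gentle algebras are compositions of BB-(co)tilts is itself a nontrivial claim --- it is precisely the $m = 0$ instance of the proposition --- and cannot be cited as known.

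The paper avoids both difficulties by never peeling off the triangles one at a time. It first shows, via the model construction (Lemma~\ref{lemma_BV}) and the branch/cycle dichotomy of Lemma~\ref{lemma_cyclebranch}, that $k \bDelta$ is already cluster tilted of type $\bbA$ as soon as $\bDelta$ has no branch relations; the whole content is then Proposition~\ref{prop_no_branch_rel_A}, a single induction on the pair $(r_{\bDelta}, n_{\bDelta})$, where $r_{\bDelta}$ counts branch relations and $n_{\bDelta}$ is a minimal component-size attached to them. At each step one reflects at the head of a minimally chosen branch relation --- the minimality is exactly what guarantees the reflection is applicable --- and a three-case computation shows the invariant strictly decreases. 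If you want to rescue your induction on $m$, you would need to prove that reflections commute with completions up to a controlled reordering of the tilt sequence, which I expect to be at least as much work as the paper's direct descent.
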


The following observation will be used many times in our
considerations without mentioning it explicitly: if $\bDelta \in
\calA$, then every orbit in $\calC_{\bDelta} / \bbZ$ is a
triangle.

We start with the following lemma.

\begin{lemma} \label{lemma_BV}
Let $\bDelta \in \calA$ and $\calO_1$, \ldots, $\calO_m$ be the
pairwise different triangles in $\bDelta$. If $\alpha_i \in
\calO_i$ for each $i \in [1, m]$ and $\bDelta' := \bDelta
\setminus \{ \alpha_i \mid i \in [1, m] \}$, then $\bDelta'$ is a
gentle quiver of tree type.
\end{lemma}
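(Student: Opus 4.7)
The strategy is a direct application of the results already established in Sections~\ref{section_AAG} and~\ref{section_completion}, combined with the characterization of gentle quivers of tree type via the AAG invariant.

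First, since $\bDelta \in \calA$, we have
\[
f_{\bDelta} = m \cdot [0, 3] + [p + m + 2, p]
\]
for some $m, p \in \bbN$. The key observation is that in this decomposition the only non-triangle summand is a single term $[p+m+2, p]$, so there is essentially one orbit $\calO \in \calN_{\bDelta} / \bbZ \cup \calC_{\bDelta} / \bbZ$ with $p(\calO) \neq 0$, while the remaining $m$ orbits are precisely the triangles $\calO_1, \ldots, \calO_m$. In particular, the number of triangles appearing in $\bDelta$ is exactly $m$, matching the hypothesis of Corollary~\ref{prop_f_remove}.

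Next, I apply Corollary~\ref{prop_f_remove} with $n = 1$, $p_1 = p + m + 2$, $q_1 = p$, and the chosen representatives $\alpha_i \in \calO_i$. The corollary gives that $\bDelta'$ is a gentle quiver and
\[
f_{\bDelta'} = [p + m + 2 + m_1, \; p + 2 m_1]
\]
for some $m_1 \in \bbN$ with $m_1 = m$ (as there is only one summand to absorb the $m$ lost triangles). Therefore
\[
f_{\bDelta'} = [p + 2m + 2, \; p + 2m] = [p' + 2, p']
\]
with $p' := p + 2m \in \bbN$.

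Finally, Proposition~\ref{prop_gentle_A} characterizes gentle quivers of tree type as exactly those with $f_{\bDelta'} = [p' + 2, p']$ for some $p' \in \bbN$. Applying this characterization concludes that $\bDelta'$ is of tree type, finishing the proof. There is no real obstacle here: the lemma is essentially a bookkeeping consequence of Corollary~\ref{prop_f_remove} combined with Proposition~\ref{prop_gentle_A}, and the only thing one must verify carefully is that the $m$ triangles of $\bDelta$ correspond exactly to the $m$ copies of $[0,3]$ in $f_{\bDelta}$, which is immediate because every orbit in $\calC_{\bDelta} / \bbZ$ contributing $[0,3]$ to the AAG invariant is a triangle.
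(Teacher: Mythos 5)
Your proof is correct and follows the same route as the paper, whose proof of Lemma~\ref{lemma_BV} is exactly ``Follows immediately from Propositions~\ref{prop_f_remove} and~\ref{prop_gentle_A}.'' You simply spell out the bookkeeping (the $m$ copies of $[0,3]$ are precisely the triangles since any orbit in $\calN_{\bDelta}/\bbZ$ has $p(\calO) \geq 1$ while $p+m+2 > 0$, and then $n=1$ forces $m_1 = m$), which is the intended argument.
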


\begin{proof}
Follows immediately from Propositions~\ref{prop_f_remove}
and~\ref{prop_gentle_A}.
\end{proof}

The quiver $\bDelta'$ described in the above lemma will be called
a model of $\bDelta$. Obviously, a model of $\bDelta$ is not
uniquely determined by $\bDelta$.

If $\bDelta$ is a gentle quiver, then we call $(\alpha, \beta) \in
R$ a branch relation if $\alpha$ or $\beta$ is a branch arrow.
Observe that if $\bDelta \in \calA$, then $(\alpha, \beta) \in R$
is a branch relation if and only if both $\alpha$ and $\beta$ are
branch arrows.

We have the following description of the branch and the cycle
arrows for the gentle quivers from $\calA$.

\begin{lemma} \label{lemma_cyclebranch}
Let $\bDelta \in \calA$ and $\alpha \in \Delta_1$. Then $\alpha$
is a branch \textup{(}cycle\textup{)} arrow if and only if $\alpha
\not \in \calC_{\bDelta}$ \textup{(}$\alpha \in \calC_{\bDelta}$,
respectively\textup{)}.
\end{lemma}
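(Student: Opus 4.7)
The plan is to analyze the cycle structure of the underlying graph of $\Delta$ by comparing $\bDelta$ to a model $\bDelta'$ produced by Lemma~\ref{lemma_BV}; since the two equivalences in the statement are complementary, it suffices to show that $\alpha$ is a cycle arrow if and only if $\alpha \in \calC_{\bDelta}$. Using the observation preceding Lemma~\ref{lemma_BV} that every orbit in $\calC_{\bDelta}/\bbZ$ is a triangle, I would first note that $\calC_{\bDelta}$ is the disjoint union of the $m$ triangles $\calO_1, \ldots, \calO_m$, each of size $3$ and each forming a $3$-cycle in the underlying graph of $\Delta$. Fixing an $\alpha_i \in \calO_i$ for each $i$, Lemma~\ref{lemma_BV} says that the model $\bDelta' := \bDelta \setminus \{\alpha_i \mid i \in [1,m]\}$ is of tree type; since $\Delta'$ is connected, its underlying graph is a spanning tree of the underlying graph of $\Delta$.

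The easy direction---that every $\alpha \in \calC_{\bDelta}$ is a cycle arrow---then follows from the $3$-cycle structure. If $\alpha \in \calO_i$, the other two arrows of $\calO_i$ form a path of length $2$ between the endpoints of $\alpha$, so any walk in $\Delta$ through $\alpha$ can be rerouted inside $\Delta \setminus \{\alpha\}$. Hence $\Delta \setminus \{\alpha\}$ remains connected, and $\alpha$ is a cycle arrow.

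For the converse I would use the spanning tree $\Delta'$. Each removed arrow $\alpha_i$ has a fundamental cycle with respect to $\Delta'$, namely $\alpha_i$ together with the unique $\Delta'$-path between its endpoints. The two surviving arrows of $\calO_i$ lie in $\Delta'$ and form such a path of length $2$, so by uniqueness the fundamental cycle of $\alpha_i$ is exactly $\calO_i$. Therefore the triangles form a fundamental-cycle basis of the $\mathbb{F}_2$-cycle space of $\Delta$. If $\alpha$ is a cycle arrow, then $\alpha$ lies on some cycle $C$; expressing $C$ in this basis shows that $\alpha$ belongs to an odd, hence positive, number of the $\calO_i$, so $\alpha \in \calC_{\bDelta}$. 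The main obstacle is pinning down that each fundamental cycle coincides with the corresponding triangle, which reduces to the uniqueness of paths in the tree $\Delta'$; beyond that, the argument is routine $\mathbb{F}_2$ cycle-space linear algebra.
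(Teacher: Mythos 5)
Your proof is correct, and it rests on the same two pillars as the paper's: the model $\bDelta'$ from Lemma~\ref{lemma_BV} is a spanning tree of $\Delta$, and $\calC_{\bDelta}$ is the disjoint union of the triangles. The forward direction (arrows in $\calC_{\bDelta}$ are cycle arrows, by rerouting through the other two sides of the triangle) is exactly the paper's one-line observation. Where you diverge is the converse: the paper argues by direct disconnection --- if $\alpha \notin \calC_{\bDelta}$ then $\alpha$ is an edge of the tree $\Delta'$, so $\Delta' \setminus \{\alpha\}$ is disconnected, and reinserting the arrows $\alpha_i$ cannot reconnect it because each $\alpha_i$ has its endpoints already joined in $\Delta' \setminus \{\alpha\}$ by the two surviving sides of its triangle --- whereas you prove the contrapositive via the $\mathbb{F}_2$ cycle space, identifying the fundamental cycles of the non-tree edges with the triangles and expanding an arbitrary cycle through $\alpha$ in that basis. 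The two arguments are logically equivalent reformulations of the same structural fact (the triangles exhaust all cycles of $\Delta$ beyond the spanning tree), but yours has the merit of making fully explicit the step the paper dismisses as ``this immediately implies,'' at the modest cost of invoking cycle-space linear algebra where a direct connectivity argument suffices.
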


\begin{proof}
Obviously, $\alpha$ is a cycle arrow if $\alpha \in
\calC_{\bDelta}$. On the other hand, if $\alpha \not \in
\calC_{\bDelta}$ and $\bDelta'$ is a model of $\bDelta$, then
$\alpha \in \Delta_1'$. Since $|\Delta_0'| = |\Delta_1'| + 1$
(according to the previous lemma), $\Delta' \setminus \{ \alpha
\}$ is not connected. This immediately implies that $\Delta
\setminus \{ \alpha \}$ is not connected and finishes the proof.
\end{proof}

As a consequence we obtain the following characterization of the
cluster tilted algebras of type $\bbA$.

\begin{corollary}
Let $\bDelta \in \calA$. Then the path algebra of  $\bDelta$ is a
cluster tilted algebra of type $\bbA$ if and only if there are no
branch relations in $\bDelta$.
\end{corollary}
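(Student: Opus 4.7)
The plan is to prove both implications separately, leveraging the structural characterization in Proposition~\ref{proposition_cluster_A}.

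For the forward implication, suppose $k \bDelta$ is cluster tilted of type $\bbA$. By Proposition~\ref{proposition_cluster_A}, we may identify $\bDelta$ with the quiver obtained from some tree-type gentle quiver $\bDelta_0$ by completing a set $R_0$ of isolated relations. I would argue directly from the definition of the completion: each $\rho = (\alpha, \beta) \in R_0$ produces a new arrow $\gamma_\rho$ forming a $3$-cycle with $\alpha$ and $\beta$, and contributes precisely the three triangle relations $\rho$, $(\gamma_\rho, \alpha)$, $(\beta, \gamma_\rho)$ to $R$. Thus every arrow appearing in any relation of $\bDelta$ belongs to one of these triangles, which by Lemma~\ref{lemma_cyclebranch} identifies it as a cycle arrow. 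Hence no relation is a branch relation.

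For the reverse implication, assume $\bDelta \in \calA$ has no branch relations. Choose a representative $\alpha_i$ from each triangle $\calO_i = \{\alpha_i, \beta_i, \gamma_i\}$ (with $(\beta_i, \gamma_i) \in R$ after a suitable labeling) and set $\bDelta' := \bDelta \setminus \{\alpha_1, \ldots, \alpha_m\}$. By Lemma~\ref{lemma_BV}, $\bDelta'$ is of tree type. I would then establish two key claims. First, every relation of $\bDelta$ is contained in some triangle: if $(\alpha, \beta) \in R$, the hypothesis together with Lemma~\ref{lemma_cyclebranch} forces $\alpha, \beta \in \calC_{\bDelta}$, and since $\Psi_{\bDelta}(\alpha) = \beta$, both lie in the same orbit of $\calC_{\bDelta}/\bbZ$, which by membership of $\bDelta$ in $\calA$ must be a triangle. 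Consequently $R' = \{(\beta_i, \gamma_i) : i \in [1, m]\}$. Second, each $(\beta_i, \gamma_i)$ is an isolated relation in $\bDelta'$: by gentleness condition (3) applied to $\bDelta$, the unique arrow $\delta$ with $(\delta, \beta_i) \in R$ is $\alpha_i$, and symmetrically for $(\gamma_i, \epsilon) \in R$; since $\alpha_i \notin \Delta_1'$, no such extending arrows exist in $\bDelta'$, so $(\beta_i, \gamma_i)$ is a maximal antipath.

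Finally I would verify that completing the relations of $R'$ in $\bDelta'$ recovers $\bDelta$: for $\rho_i = (\beta_i, \gamma_i)$, the new arrow $\gamma_{\rho_i}$ satisfies $s\gamma_{\rho_i} = t\beta_i = s\alpha_i$ and $t\gamma_{\rho_i} = s\gamma_i = t\alpha_i$, and the added relations $(\gamma_{\rho_i}, \beta_i)$, $(\gamma_i, \gamma_{\rho_i})$ correspond to the original $(\alpha_i, \beta_i)$, $(\gamma_i, \alpha_i) \in R$ under the identification $\gamma_{\rho_i} \leftrightarrow \alpha_i$. This yields an isomorphism between the completion and $\bDelta$, so Proposition~\ref{proposition_cluster_A} gives the conclusion.

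The main obstacle I anticipate is the reverse direction's first claim, namely that under the no-branch-relations hypothesis every relation fits inside a single triangle. Once this structural fact is in place, the isolation check in $\bDelta'$ and the reconstruction by completion are both straightforward book-keeping using the combinatorial description of $\Psi_{\bDelta}$ and the completion recipe.
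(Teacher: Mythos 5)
Your proof is correct and takes essentially the same route as the paper: the forward direction is read off from Proposition~\ref{proposition_cluster_A}, and the reverse direction passes to a model $\bDelta'$ (Lemma~\ref{lemma_BV}), uses Lemma~\ref{lemma_cyclebranch} to see that every relation lies inside a triangle, and reconstructs $\bDelta$ from $\bDelta'$ by completing the residual isolated relations. You simply spell out the bookkeeping (isolatedness of the $(\beta_i,\gamma_i)$ in $\bDelta'$ and the identification $\gamma_{\rho_i}\leftrightarrow\alpha_i$) that the paper's two-sentence proof leaves implicit.
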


\begin{proof}
Obviously, if the path algebra of $\bDelta$ is a cluster tilted
algebra of type $\bbA$, then there are no branch relations in
$\bDelta$ by Proposition~\ref{proposition_cluster_A}.

On the other hand, assume that there are no branch relations in
$\bDelta$. If $\bDelta'$ is a model of $\bDelta$, then the
previous lemma implies that $R'$ consists of isolated relations
and $\bDelta$ is obtained from $\bDelta'$ by completing the
relations from $R'$. This finishes the proof according to
Proposition~\ref{proposition_cluster_A}.
\end{proof}

Consequently, in order to prove Proposition~\ref{proposition_A} we
only need to show the following.

\begin{proposition} \label{prop_no_branch_rel_A}
Let $\bDelta \in \calA$. Then there exists a gentle quiver
$\bDelta'$ without branch relations such that $k \bDelta$ and $k
\bDelta'$ are BB-equivalent.
\end{proposition}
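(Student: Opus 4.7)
We argue by induction on $b(\bDelta) := p - k$, where $(p, m)$ are determined by $f_{\bDelta} = m \cdot [0,3] + [p+m+2, p]$ and $k$ is the number of positive-length maximal antipaths in the unique non-triangle $\bbZ$-orbit $\calO_0 \in \calN_{\bDelta}/\bbZ$. Each nontrivial antipath of length $\ell \geq 1$ in $\calO_0$ accounts for $\ell - 1$ consecutive-arrow relations, and since the lengths in $\calO_0$ sum to $p$, there are $p - k$ relations carried by antipaths in $\calO_0$. By Lemma~\ref{lemma_cyclebranch}, the arrows appearing in $\calO_0$ are precisely the branch arrows (arrows in the triangles lie in $\calC_{\bDelta}$), hence $b(\bDelta)$ is exactly the number of branch relations of $\bDelta$. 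If $b(\bDelta) = 0$ then $R$ consists only of triangle relations, so by Proposition~\ref{proposition_cluster_A} applied to any model of $\bDelta$, $k \bDelta$ is already cluster tilted of type $\bbA$.

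For the inductive step, let $b(\bDelta) \geq 1$ and fix a branch relation $\rho = (\alpha, \beta) \in R$, lying inside a maximal antipath $\omega = (\gamma_1, \ldots, \gamma_n) \in \calO_0$ of length $n \geq 2$. The plan is to produce a finite chain of BB-(co)reflections $\bDelta = \bDelta_0 \to \bDelta_1 \to \cdots \to \bDelta_N$, each a derived equivalence (so each $\bDelta_i$ still lies in $\calA$ by Theorem~\ref{theorem_AAG}), with $b(\bDelta_N) < b(\bDelta)$; applying the inductive hypothesis to $\bDelta_N$ then yields the desired $\bDelta'$. The chain is constructed by walking inward from the outer endpoint $t\gamma_1$ of $\omega$ toward $\rho$, performing a BB-(co)reflection at each vertex encountered. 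Applicability of each reflection follows from the maximality of $\omega$ together with the gentle conditions: at the outer vertex $t\gamma_1$ no arrow $\alpha'$ satisfies $s\alpha' = t\gamma_1$ and $(\alpha', \gamma_1) \in R$, which forces the local picture to fit one of the allowed reflection configurations of Section~\ref{section_tilting} (either $t\gamma_1$ is a sink with nothing starting at it, or there is a unique $\beta' \neq \gamma_1$ also terminating at $t\gamma_1$ providing the required $\beta_{\alpha'}$). The cumulative effect of the walk is to detach $\gamma_1$ from $\omega$, shortening $\omega$ by one arrow and removing the outermost relation $(\gamma_1, \gamma_2)$ from $\calO_0$; this increases $k$ by one and decreases $b$ by one.

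The main technical obstacle is verifying that this composite detachment operates as described: introduces no compensating new relations elsewhere in $\calO_0$, preserves the $m$ triangles (which sit on disjoint cycle arrows and so are untouched as long as the reflection vertices avoid the cycle-arrow interiors), and produces a gentle quiver at every intermediate step. Each point reduces to a local inspection using the explicit formulas for $s_{\Delta'}$, $t_{\Delta'}$ and $R'$ given in Section~\ref{section_tilting}, carried out by a case analysis on the valences and the relation arrangement at each vertex along the walk. The number of configurations to consider is finite; once they are all settled, iterating the reduction at most $b(\bDelta)$ times produces a gentle quiver BB-equivalent to $\bDelta$ with no branch relations, which by the base case is cluster tilted of type $\bbA$, completing the proof.
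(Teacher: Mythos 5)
Your overall strategy is the same as the paper's: induct on the number of branch relations and eliminate them by BB-(co)reflections. Your bookkeeping identity $b(\bDelta)=p-k=r_{\bDelta}$ is correct, and your observation that maximality of the antipath $\omega$ makes the reflection at $t\gamma_1$ applicable is a genuine (and correct) point. But the proof has a real gap: everything that actually constitutes the argument is deferred to ``a local inspection \dots\ a case analysis \dots\ once they are all settled,'' and it is never settled. The entire difficulty of this proposition lives in exactly that case analysis, and it is not a formality. In the paper's proof a single reflection usually does \emph{not} decrease the number of branch relations: in two of the three local configurations at the reflection vertex (another cycle arrow terminating there, or another branch arrow terminating there) the relation $(\alpha,\beta)$ is replaced by a new branch relation $(\gamma,\beta)$, so $r_{\bDelta'}=r_{\bDelta}$ and one needs a secondary, strictly decreasing measure --- the paper uses $n_{(\alpha,\beta)}^{\bDelta}$, the size of the component of $\Delta\setminus\{\alpha\}$ containing $t\alpha$ --- to make the induction terminate. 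Your proposal asserts instead that a composite ``walk'' from $t\gamma_1$ toward $\rho$ has net effect $b\mapsto b-1$, but after the first reflection the quiver has changed, the notion of ``walking toward $\rho$'' must be re-examined in the new quiver, and you give no invariant that guarantees the walk terminates or that no compensating branch relation appears elsewhere.

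A second, related omission: the applicability of the reflections at the \emph{subsequent} vertices of your walk is not established. In the paper this is precisely where the minimality of $n_{(\alpha,\beta)}^{\bDelta}$ is used a second time --- it rules out an arrow $\delta$ with $s\delta=t\alpha$ and $(\delta,\alpha)\in R$, which would both obstruct the reflection and indicate a branch relation closer to the tip that should have been attacked first. Your choice of relation (an arbitrary branch relation, then the outermost relation of its antipath) does not come with such a minimality property, so even the first step of your inductive round can land you in a configuration your outline does not address. To repair the proof you would need to (i) specify which relation to attack via an explicit minimality condition, (ii) carry out the three-way case analysis at the reflection vertex, and (iii) exhibit the lexicographic measure $(r_{\bDelta},n_{\bDelta})$ that strictly decreases --- at which point you would have reproduced the paper's argument.
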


\begin{proof}
For a branch relation $(\alpha, \beta) \in R$ let $n_{(\alpha,
\beta)}^{\bDelta}$ be the number of the vertices in the connected
component of $\Delta \setminus \{ \alpha \}$ containing $t
\alpha$. Let $r_{\bDelta}$ denote the number of the branch
relations in $\bDelta$ and
\[
n_{\bDelta} := \min \{ n_\rho^{\bDelta} \mid \text{$\rho \in R$ is
a branch relation} \}
\]
(by convention $\min \varnothing = \infty$).

If $r_{\bDelta} > 0$, then we fix a branch relation $(\alpha,
\beta)$ such that $n_{(\alpha, \beta)}^{\bDelta} = n_{\bDelta}$.
Observe that we can apply the reflection at $t \alpha$. Indeed, if
$\delta \in \Delta_1$ and $s \delta = t \alpha$, then $(\delta,
\alpha) \not \in R$, since otherwise $n_{(\delta,
\alpha)}^{\bDelta} < n_{(\alpha, \beta)}^{\bDelta} = n_{\bDelta}$,
which contradicts the minimality of $n_{(\alpha,
\beta)}^{\bDelta}$. If $\bDelta'$ is the quiver obtained from
$\bDelta$ by applying the reflection at $t \alpha$, then we will
show that either $r_{\bDelta'} < r_{\bDelta}$ or $r_{\bDelta'} =
r_{\bDelta}$ and $n_{\bDelta'} < n_{\bDelta}$. Consequently, the
claim follows by induction.

We have to consider three cases:
\begin{enumerate}

\item
there exists a cycle arrow $\gamma$ such that $t \gamma = t
\alpha$,

\item
there exists a branch arrow $\gamma$ such that $\gamma \neq
\alpha$ and $t \gamma = t \alpha$,

\item
there is no arrow $\gamma$ such that $\gamma \neq \alpha$ and $t
\gamma = t \alpha$.

\end{enumerate}

(1)~First assume that there exists a cycle arrow $\gamma$ such
that $t \gamma = t \alpha$. Then $\gamma \in \calC_{\bDelta}$
according to Lemma~\ref{lemma_cyclebranch}. Let $\gamma' :=
\Psi_{\bDelta} \gamma$ and $\gamma'' := \Psi_{\bDelta} \gamma'$.
Observe that $s \gamma'' = t \alpha$. Moreover, if $s \delta = t
\alpha$ for some $\delta \in \Delta_1$, then $\delta = \gamma''$.
Indeed, if $\delta \neq \gamma''$, then $(\delta, \alpha) \in R$,
hence $\delta$ is a branch arrow, but this contradicts the
minimality of $n_{(\alpha, \beta)}^{\bDelta}$. Consequently,
$\Delta'$ is obtained from $\Delta$ by replacing the subquiver
\[
\xymatrix{\vertexD{x''} \ar[rr]^{\gamma'} & & \vertexD{x'}
\ar[ld]_\gamma \\ & \vertexU{x} \ar[lu]^{\gamma''} & \vertexU{y}
\ar[l]_\alpha & \vertexU{z} \ar[l]_\beta}
\]
by the quiver
\[
\vcenter{\xymatrix{\vertexD{y} \ar[rr]^{\gamma''} & &
\vertexD{x''} \ar[ld]_{\gamma'} \\ \vertexU{x'} & \vertexU{x}
\ar[lu]_\alpha \ar[l]_\gamma & \vertexU{z} \ar[l]_\beta}},
\]
and
\[
R' = (R \setminus \{ (\alpha, \beta), (\gamma, \gamma'),
(\gamma'', \gamma) \}) \cup \{ (\alpha, \gamma'), (\gamma, \beta),
(\gamma'', \alpha) \}.
\]
Thus $r_{\bDelta'} = r_{\bDelta}$ in this case. Moreover,
$(\gamma, \beta)$ is a branch relation in $\bDelta'$ and
$n_{(\gamma, \beta)}^{\bDelta'} < n_{(\alpha, \beta)}^{\bDelta}$,
hence $n_{\bDelta'} < n_{\bDelta}$.

(2)~Next, assume that there is a branch arrow $\gamma$ such that
$\gamma \neq \alpha$ and $t \gamma = t \alpha$. By the minimality
of $n_{(\alpha, \beta)}^{\bDelta}$ there is no $\delta \in
\Delta_1$ such that $s \delta = t \alpha$ (as otherwise either
$(\delta, \alpha) \in R$ or $(\delta, \gamma) \in R$). If there is
no $\gamma' \in \Delta_1$ such that $(\gamma, \gamma') \in R$,
then $\Delta'$ is obtained from $\Delta$ by replacing the
subquiver $\xymatrix{\vertexU{y'} \ar[r]^\delta & \vertexU{x} &
\vertexU{y} \ar[l]_\alpha & \vertexU{z} \ar[l]_\beta}$ by the
quiver
\[
\vcenter{\xymatrix{& \vertexD{y} \\ \vertexU{y'} & \vertexU{x}
\ar[l]_\gamma \ar[u]^\alpha & \vertexU{z} \ar[l]_\beta}},
\]
and $R' = (R \setminus \{ (\alpha, \beta) \}) \cup \{ (\delta,
\beta) \}$ (thus $r_{\bDelta'} = r_{\bDelta}$ and $n_{\bDelta'} <
n_{\bDelta}$). Otherwise, $\Delta'$ is obtained from $\Delta$ by
replacing the subquiver
\[
\xymatrix{\vertexU{z'} \ar[r]^{\gamma'} & \vertexU{y'}
\ar[r]^\gamma & \vertexU{x} & \vertexU{y} \ar[l]_\alpha &
\vertexU{z} \ar[l]_\beta}
\]
by the quiver
\[
\vcenter{\xymatrix{\vertexD{y'} & & \vertexD{y} \\ \vertexU{z'}
\ar[r]^{\gamma'} & \vertexU{x} \ar[lu]_\gamma \ar[ru]^\alpha &
\vertexU{z} \ar[l]_\beta}},
\]
and
\[
R' = (R \setminus \{ (\alpha, \beta), (\gamma, \gamma') \}) \cup
\{ (\alpha, \gamma'), (\gamma, \beta) \}.
\]
Obviously $r_{\bDelta} = r_{\bDelta'}$. Moreover, $n_{(\gamma,
\beta)}^{\bDelta'} < n_{(\alpha, \beta)}^{\bDelta}$ (as $x$ and
$z$ are not in the connected component of $\Delta' \setminus \{
\gamma \}$ containing $y'$), so $n_{\bDelta'} < n_{\bDelta}$.

(3)~Finally assume that there is no arrow $\gamma$ such that
$\gamma \neq \alpha$ and $t \gamma = t \alpha$. If in addition,
there is no arrow $\delta$ such that $s \delta = t \alpha$, then
$\Delta'$ is obtained from $\Delta$ by replacing the subquiver
$\xymatrix{\vertexU{x} & \vertexU{y} \ar[l]_\alpha & \vertexU{z}
\ar[l]_\beta}$ by the subquiver $\xymatrix{\vertexU{y} &
\vertexU{x} \ar[l]_\alpha & \vertexU{z} \ar[l]_\beta}$ and $R' = R
\setminus \{ (\alpha, \beta) \}$, hence $r_{\bDelta'} <
r_{\bDelta}$. On the other hand, if there is $\delta \in \Delta_1$
such that $s \delta = t \alpha$, then $\delta$ is a branch arrow
(otherwise, there would be a cycle arrow $\gamma$ such that $t
\gamma = t \alpha$). Moreover, by the minimality of $n_{(\alpha,
\beta)}^{\bDelta}$, we have that $(\delta, \alpha) \not \in R$ and
there is no $\delta' \in \Delta_1$ such that $s \delta' = t
\alpha$ and $\delta' \neq \delta$. Thus $\Delta'$ is obtained from
$\Delta$ by replacing the subquiver $\xymatrix{\vertexU{x'} &
\vertexU{x} \ar[l]_\delta & \vertexU{y} \ar[l]_\alpha &
\vertexU{z} \ar[l]_\beta}$ by the subquiver
$\xymatrix{\vertexU{x'} & \vertexU{y} \ar[l]_\gamma & \vertexU{x}
\ar[l]_\alpha & \vertexU{z} \ar[l]_\beta}$ and $R' = (R \setminus
\{ (\alpha, \beta) \}) \cup \{ (\gamma, \alpha) \}$. Consequently,
$r_{\bDelta'} = r_{\bDelta}$ and $n_{\bDelta'} < n_{\bDelta}$.
This finishes the proof.
\end{proof}

\section{Proof of Theorem~\ref{main_B}} \label{section_B}

Let $\tilde{\calA}$ denote the class of the gentle quivers
$\bDelta$ such that
\[
f_{\bDelta} = (m_1 + m_2) \cdot [0, 3] + [p + m_1, p] + [q + m_2,
q]
\]
for some $m_1, m_2, p, q \in \bbN$ such that $p + m_1 > 0$ and $q
+ m_2 > 0$. Taking into account
Corollary~\ref{corollary_cluster_A_tilde}, the following
proposition will imply Theorem~\ref{main_B}.

\begin{proposition} \label{proposition_A_tilde}
If $\bDelta \in \tilde{\calA}$, then $k \bDelta$ is BB-equivalent
to a cluster tilted algebra of type $\tilde{\bbA}$.
\end{proposition}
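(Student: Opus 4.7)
The plan is to imitate the structure of the proof of Proposition~\ref{proposition_A}. First I would set up an analog of the ``model'' construction of Lemma~\ref{lemma_BV}. The conditions $p + m_1 > 0$ and $q + m_2 > 0$ in the definition of $\tilde{\calA}$ force the two non-$[0,3]$ summands of $f_\bDelta$ to come from orbits in $\calN_\bDelta / \bbZ$, so the only orbits in $\calC_\bDelta / \bbZ$ are the $m_1 + m_2$ triangles, say $\calO_1, \ldots, \calO_{m_1+m_2}$. Picking $\alpha_i \in \calO_i$ for each $i$ and setting $\bDelta' := \bDelta \setminus \{\alpha_i \mid i \in [1, m_1 + m_2]\}$, Corollary~\ref{prop_f_remove} gives
\[
f_{\bDelta'} = [p + m_1 + a, p + 2a] + [q + m_2 + b, q + 2b]
\]
for some $a, b \in \bbN$ with $a + b = m_1 + m_2$. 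By Proposition~\ref{prop_1_cycle}, $\bDelta'$ is then a 1-cycle gentle quiver, consisting of a single oriented cycle with trees attached.

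Next I would prove the analog of Lemma~\ref{lemma_cyclebranch}, characterizing the branch arrows of $\bDelta$ as exactly those that, in some model $\bDelta'$, are not contained in $\calC_\bDelta$ and do not lie on the distinguished cycle of $\bDelta'$. Using this, I would establish the analog of the corollary following Lemma~\ref{lemma_cyclebranch}: if $\bDelta$ admits a model $\bDelta'$ whose relations are all isolated in $\bDelta'$, then $\bDelta$ is obtained from $\bDelta'$ by completing these isolated relations, and Proposition~\ref{proposition_cluster_A_tilde}(3) forces $k \bDelta$ to be cluster tilted of type $\tilde{\bbA}$. This reduces the problem to showing that by iterated BB-(co)tilting one can arrange for some model to have only isolated relations.

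For this reduction I would closely follow the template of Proposition~\ref{prop_no_branch_rel_A}: introduce a numerical measure $(r_\bDelta, n_\bDelta)$ lexicographically ordered, where $r_\bDelta$ counts the relations in $R$ witnessing a non-isolated configuration in every model and $n_\bDelta$ measures how ``deep'' a minimally placed such relation sits relative to the cycle, then pick an obstructing relation $(\alpha, \beta)$ of minimal depth, verify that a BB-reflection at $t \alpha$ (or the dual coreflection at $s \beta$) can be applied because of this minimality, and check that the resulting quiver $\bDelta''$ satisfies $(r_{\bDelta''}, n_{\bDelta''}) < (r_\bDelta, n_\bDelta)$. The main obstacle, and the genuinely new ingredient compared to the $\bbA$ case, is the situation in which the obstructing relation sits along the big cycle of the model (a ``chained'' relation along the cycle) rather than purely on a tree branch; here the reflection must be chosen so that it neither destroys any triangle nor creates a new chain along the cycle, and I expect this to require a handful of extra sub-cases refining the three cases treated in the proof of Proposition~\ref{prop_no_branch_rel_A}.
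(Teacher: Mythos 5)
Your overall strategy (a model construction followed by an induction on BB-reflections) matches the paper's, and your first step is exactly Lemma~\ref{lemma_erase_A_tilde}. But there are two genuine gaps. First, your reduction target is wrong: to invoke Proposition~\ref{proposition_cluster_A_tilde}(3) you need $\bDelta$ to be obtained from a 1-cycle gentle quiver by completing \emph{all} of its relations, equivalently, every relation of $\bDelta$ must belong to a triangle. Arranging that ``some model has only isolated relations'' is strictly weaker: $\bDelta$ is recovered from a model by completing only those isolated relations that correspond to triangles of $\bDelta$, and any relation of $\bDelta$ not lying in a triangle survives in every model as an uncompleted (yet possibly isolated) relation. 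A gentle quiver of type $\tilde{\bbA}$ consisting of a long cycle with one clockwise and one anticlockwise relation is its own model, has only isolated relations, lies in $\tilde{\calA}$, and is not cluster tilted; your induction would terminate on it and then draw a false conclusion.

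Second, and more seriously, eliminating what the paper calls \emph{free relations} (relations both of whose arrows are cycle arrows outside $\calC_{\bDelta}$) is not ``a handful of extra sub-cases'' of the argument of Proposition~\ref{prop_no_branch_rel_A}. A free relation sitting on the cycle cannot be destroyed by one local reflection, and your lexicographic measure gives no reason for the process to terminate there. The paper must (i) first fold all branch arrows and branch triangles into the cycle by a separate induction (Proposition~\ref{prop_nobranchesAtilde}), (ii) prove, via Proposition~\ref{prop_interpretation} and Corollary~\ref{coro_f_completion}, that if a free relation exists then the standard model necessarily contains an \emph{oppositely oriented} relation --- a global counting argument with $f_{\bDelta}$ that uses precisely the inequalities built into the definition of $\tilde{\calA}$ --- and then (iii) transport the free relation along the cycle by a sequence of reflections until it can be cancelled against that oppositely oriented relation (Proposition~\ref{prop_finalreduction Atilde}). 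Without step (ii) a single equioriented free relation would simply be pushed around the cycle indefinitely. This counting argument is the genuinely new ingredient of the $\tilde{\bbA}$ case, and it is absent from your plan.
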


Observe that similarly as in the $\bbA$-case, every orbit in
$\calC_{\bDelta} / \bbZ$ is a triangle provided $\bDelta \in
\tilde{\calA}$.

We start the proof with the following lemma.

\begin{lemma} \label{lemma_erase_A_tilde}
Let $\bDelta \in \tilde{\calA}$ and $\calO_1$, \ldots, $\calO_m$
be the pairwise different triangles in $\bDelta$. If $\alpha_i \in
\calO_i$ for each $i \in [1, m]$ and $\bDelta' := \bDelta
\setminus \{ \alpha_i \mid i \in [1, m] \}$, then $\bDelta'$ is a
1-cycle gentle quiver.
\end{lemma}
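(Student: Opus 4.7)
The plan is to compute $f_{\bDelta'}$ via Corollary~\ref{prop_f_remove} and then match the resulting expression against the characterization of 1-cycle gentle quivers in Proposition~\ref{prop_1_cycle}.

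First, I observe that the hypotheses $p + m_1 > 0$ and $q + m_2 > 0$ guarantee that neither $(p + m_1, p)$ nor $(q + m_2, q)$ equals $(0, 3)$. Since every orbit contributing to $f_{\bDelta}(0,3)$ is a triangle (orbits in $\calN_{\bDelta} / \bbZ$ have $p(\calO) \geq 1$, while orbits in $\calC_{\bDelta} / \bbZ$ have $p(\calO) = 0$ and contribute to $f_{\bDelta}(0,3)$ exactly when $q(\calO) = 3$), the number of triangles in $\bDelta$ is precisely $m_1 + m_2$, so we are indeed removing one arrow from each triangle. Applying Corollary~\ref{prop_f_remove} with $n = 2$, $(p_1, q_1) := (p + m_1, p)$ and $(p_2, q_2) := (q + m_2, q)$, we obtain that $\bDelta'$ is a gentle quiver and
\[
f_{\bDelta'} = [p + m_1 + k_1,\; p + 2 k_1] + [q + m_2 + k_2,\; q + 2 k_2]
\]
for some $k_1, k_2 \in \bbN$ with $k_1 + k_2 = m_1 + m_2$.

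Next, I match this against the form $[P + r, P] + [Q - r, Q]$ of Proposition~\ref{prop_1_cycle}. The differences between first and second coordinates of the two pairs are $m_1 - k_1$ and $m_2 - k_2$, which sum to zero by the identity $k_1 + k_2 = m_1 + m_2$. Hence, setting $P := p + 2 k_1$, $Q := q + 2 k_2$ and $r := m_1 - k_1$, the expression above becomes exactly $[P + r, P] + [Q - r, Q]$; after possibly swapping the two pairs we may take $r \geq 0$. A direct verification gives $Q - r = q + m_2 + k_2 \geq 0$ (and analogously $P - |r| = p + k_1 + m_1 \geq 0$ in the swapped case), so all parameters lie in $\bbN$. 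By Proposition~\ref{prop_1_cycle}, $\bDelta'$ is therefore a 1-cycle gentle quiver.

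The argument poses no substantive obstacle; it is essentially a numerical computation whose key content is the cancellation $(m_1 - k_1) + (m_2 - k_2) = 0$ forced by the constraint $k_1 + k_2 = m_1 + m_2$ from Corollary~\ref{prop_f_remove}. The only book-keeping subtlety is the identification of the number of triangles in $\bDelta$ with $m_1 + m_2$, which is where the hypotheses $p + m_1 > 0$ and $q + m_2 > 0$ enter.
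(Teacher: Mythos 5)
Your argument is correct and follows exactly the route the paper takes: the paper's proof is the one-line remark that the lemma ``follows immediately from Propositions~\ref{prop_f_remove} and~\ref{prop_1_cycle}'', and your write-up simply supplies the bookkeeping (identifying the number of triangles with $m_1+m_2$ via the hypotheses $p+m_1>0$, $q+m_2>0$, and the cancellation $(m_1-k_1)+(m_2-k_2)=0$) that the authors leave implicit. No gaps.
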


\begin{proof}
Follows immediately from Propositions~\ref{prop_f_remove}
and~\ref{prop_1_cycle}.
\end{proof}

Again, we call the quiver $\bDelta'$ described in the above lemma
a model of $\bDelta$.

Let $\bDelta \in \tilde{\calA}$ and $\calO \in \calC_{\bDelta} /
\bbZ$. We say that $\calO$ is a branch triangle if for each subset
$\calR$ of $\calO$ such that $|\calR| = 2$ we have that $\Delta
\setminus \calR$ is not connected. Otherwise, we call $\calO$ a
cycle triangle. A cycle arrow $\alpha$ is called a strongly cycle
arrow if either $\alpha \in \Delta_1 \setminus \calC$ or $\alpha$
belongs to a cycle triangle. Recall that a vertex $x$ is said to
be adjacent to an arrow $\alpha$ if either $x = s \alpha$ or $x =
t \alpha$. Similarly, we say that a vertex $x$ is adjacent to a
triangle $\calO$ if there exists $\alpha \in \calO$ such that $x$
is adjacent to $\alpha$. A vertex $x$ will be called a strongly
cycle vertex if it is adjacent to a strongly cycle arrow. Observe
that every branch arrow in $\Delta$ belongs to $\Delta_1 \setminus
\calC_{\bDelta}$.

We have the following characterization of branch/cycle
arrows/trian\-gles.

\begin{lemma}
Let $\bDelta \in \tilde{\calA}$ and $\bDelta'$ be a model of
$\bDelta$. Then the following hold.
\begin{enumerate}

\item
$\Delta_1 \setminus \calC_{\bDelta} \subset \Delta_1'$.

\item
If $\alpha \in \Delta_1 \setminus \calC_{\bDelta}$, then $\alpha$
is a branch \textup{(}cycle\textup{)} arrow in $\Delta$ if and
only if $\alpha$ is a branch \textup{(}cycle,
respectively\textup{)} arrow in $\Delta'$.

\item
Let $\calO \in \calC_{\bDelta} / \bbZ$. Then $\calO$ is a branch
triangle if and only if $\calO \cap \Delta_1'$ consists of branch
arrows in $\Delta'$.

\item
Let $\calO \in \calC_{\bDelta} / \bbZ$. Then $\calO$ is a cycle
triangle if and only if $\calO \cap \Delta_1'$ contains a cycle
arrow in $\Delta'$.

\end{enumerate}
\end{lemma}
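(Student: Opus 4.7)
I view $\bDelta$ as obtained from $\bDelta'$ by adding back the deleted arrows $\alpha_1,\ldots,\alpha_m$. Writing $\calO_i = \{\alpha_i,\beta_i,\gamma_i\}$ with $\calO_i\cap\Delta_1'=\{\beta_i,\gamma_i\}$ and $(\beta_i,\gamma_i)$ a 2-path in $\Delta'$ sharing the vertex $v_i := s\beta_i=t\gamma_i$, the arrow $\alpha_i$ connects $u_i := s\gamma_i$ and $w_i := t\beta_i$. The engine of the plan is the following connectivity observation: for any $S\subset\Delta_1'$ and any index $j$ with $\{\beta_j,\gamma_j\}\cap S=\varnothing$, the endpoints of $\alpha_j$ are already joined in $\Delta'\setminus S$ by the surviving 2-path $(\beta_j,\gamma_j)$, so adjoining $\alpha_j$ does not change the partition into connected components. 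Since distinct triangles have disjoint arrow sets, whenever $S\subset\{\beta_i,\gamma_i\}$ for a fixed $i$ the observation applies to $\alpha_j$ for every $j\neq i$.

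Part~(1) is immediate: the deleted arrows lie in $\calC_{\bDelta}$, so $\Delta_1\setminus\calC_{\bDelta}\subset\Delta_1'$. For part~(2), given $\alpha\in\Delta_1\setminus\calC_{\bDelta}$, the arrow $\alpha$ belongs to no triangle, hence every 2-path $(\beta_j,\gamma_j)$ survives in $\Delta'\setminus\{\alpha\}$; by the observation, $\Delta\setminus\{\alpha\}$ and $\Delta'\setminus\{\alpha\}$ have the same connected components, whence $\alpha$ is branch (resp.\ cycle) in $\Delta$ iff it is so in $\Delta'$.

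For~(3) and~(4), I fix a triangle $\calO = \calO_i$ and analyze $\Delta\setminus\calR$ for each two-element $\calR\subset\calO$. Applying the observation with $S=\{\beta_i\}$ shows that $\Delta\setminus\{\alpha_i,\beta_i\}$ and $\Delta'\setminus\{\beta_i\}$ have the same connected components, so $\Delta\setminus\{\alpha_i,\beta_i\}$ is connected iff $\beta_i$ is a cycle arrow in $\Delta'$; symmetrically for $\calR=\{\alpha_i,\gamma_i\}$. This already yields the ``cycle'' direction of~(4): if at least one of $\beta_i,\gamma_i$ is a cycle arrow in $\Delta'$, then $\calO$ is a cycle triangle. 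The remaining case, and the main obstacle, is $\calR=\{\beta_i,\gamma_i\}$ under the assumption that both $\beta_i,\gamma_i$ are branch arrows in $\Delta'$. I plan to use that $\Delta'$ is a connected graph with a single independent cycle to conclude that removing the two bridges $\beta_i,\gamma_i$ produces exactly three connected components, and that by uniqueness of paths in $\Delta'$ (the only $u_i$-to-$w_i$ path is $u_i\to v_i\to w_i$) these components separate $u_i$, $v_i$ and $w_i$, with the one containing $v_i$ also containing the cycle of $\Delta'$. Reinstating $\alpha_i$ then merges only the $u_i$- and $w_i$-components, while the remaining $\alpha_j$'s change nothing by the observation, so $v_i$ is stranded and $\Delta\setminus\calR$ is disconnected. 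Combining the three cases yields~(3); then~(4) follows by contraposition.
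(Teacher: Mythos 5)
Your argument is correct and is essentially the verification that the paper leaves implicit (its proof reads only ``the claims follow directly from the appropriate definitions''): the key observation that re-adding an arrow $\alpha_j$ whose bypass $(\beta_j,\gamma_j)$ survives cannot change the component partition, combined with the fact that deleting two bridges from the connected quiver $\Delta'$ yields three components separating $u_i$, $v_i$, $w_i$, settles all four parts. One parenthetical remark is inaccurate but harmless: the unique cycle of $\Delta'$ need not lie in the component of $v_i$ (it may sit in the $u_i$- or $w_i$-component); all your argument actually needs is that $v_i$ is separated from both $u_i$ and $w_i$, which you do establish.
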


\begin{proof}
The above claims follow directly from the appropriate definitions.
\end{proof}

Let $\bDelta \in \tilde{\calA}$. One of the consequences of the
above fact is that the subquiver of $\Delta$ generated by the
strongly cycle arrows is connected (this follows since the
subquiver of a 1-cycle quiver generated by the cycle arrows is
connected). Consequently, given a branch arrow $\alpha$ there
exists a component of $\Delta \setminus \{ \alpha \}$ which does
not contain strongly cycle vertices (and this component is
obviously unique). We define $n_\alpha^{\bDelta}$ to be the number
of the vertices in this component. We put $n_\alpha^{\bDelta} :=
\infty$ if $\alpha$ is a cycle arrow. If $(\alpha, \beta)$ is a
branch relation, then we define $n_{(\alpha, \beta)}^{\bDelta} :=
\min \{ n_\alpha, n_\beta \}$. Observe the following: if $(\alpha,
\beta)$ is a branch relation and $n_{(\alpha, \beta)}^{\bDelta} =
n_\alpha^{\bDelta}$, then $t \alpha$ is not a strongly cycle
vertex (if $t \alpha$ is a strongly cycle vertex, then one easily
shows that $n_\beta^{\bDelta} < n_\alpha^{\bDelta}$).

By using arguments analogous to those used in the proof of
Proposition~\ref{prop_no_branch_rel_A} (using the above modified
definition of $n_\rho^{\bDelta}$) we prove the following.

\begin{proposition}
Let $\bDelta \in \tilde{\calA}$. Then there exists a gentle quiver
$\bDelta'$ without branch relations such that $k \bDelta$ and $k
\bDelta'$ are derived equivalent.
\end{proposition}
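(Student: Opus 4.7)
The plan is to run the induction argument of Proposition~\ref{prop_no_branch_rel_A} almost verbatim, using the modified invariant $n_\rho^{\bDelta}$ introduced above. Let $r_\bDelta$ denote the number of branch relations in $\bDelta$, and set
\[
n_\bDelta := \min\{n_\rho^\bDelta \mid \rho \in R \text{ is a branch relation}\}
\]
with the convention $\min\varnothing = \infty$. I would induct on the pair $(r_\bDelta, n_\bDelta)$ in the lexicographic order. Assuming $r_\bDelta > 0$, fix a branch relation $(\alpha, \beta)$ realizing $n_\bDelta$ and, after possibly swapping the roles of $\alpha$ and $\beta$, suppose $n^\bDelta_{(\alpha, \beta)} = n^\bDelta_\alpha$. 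The observation preceding the statement then tells us that $t\alpha$ is not a strongly cycle vertex.

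Next I would check that the reflection at $t\alpha$ is defined: if some $\delta \in \Delta_1$ had $s\delta = t\alpha$ and $(\delta, \alpha) \in R$, then $\delta$ would be a branch arrow (since $s\delta = t\alpha$ is not a strongly cycle vertex) with $n^\bDelta_{(\delta, \alpha)} < n^\bDelta_\alpha$, contradicting the minimality of $n_\bDelta$. Letting $\bDelta'$ be the result of this reflection, I would replay verbatim the three-case split from Proposition~\ref{prop_no_branch_rel_A} according to whether $t\alpha$ is the terminus of a cycle arrow, a second branch arrow, or no further arrow. In each case the explicit local modifications of $\Delta$ and $R$ described there transcribe without change, $\bDelta'$ lies in $\tilde{\calA}$ (since $f_\bDelta$ is preserved by BB-reflections by Theorem~\ref{theorem_AAG}), and $(r_{\bDelta'}, n_{\bDelta'}) < (r_\bDelta, n_\bDelta)$ in the lexicographic order. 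The induction hypothesis then furnishes a BB-equivalent, hence derived equivalent, gentle quiver without branch relations.

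The main technical obstacle is case (1): a cycle arrow $\gamma$ with $t\gamma = t\alpha$. Since $t\alpha$ is not a strongly cycle vertex, $\gamma$ cannot be strongly cycle, so $\gamma$ necessarily lies in a branch triangle attached to a tree-like part of $\bDelta$. The reflection at $t\alpha$ then only permutes arrows and relations inside that branch triangle, leaving the cyclic backbone of $\Delta$ and all strongly cycle arrows untouched; one verifies that the replacement branch relation $(\gamma, \beta)$ satisfies $n^{\bDelta'}_{(\gamma,\beta)} < n_\bDelta$. Cases (2) and (3) only concern the tree-like portion of $\bDelta$ disjoint from the cycle, so they go through literally as in Proposition~\ref{prop_no_branch_rel_A}. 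Since each step is realized by a BB-tilt, composing them yields the BB-equivalence claimed in the proposition, which in particular is a derived equivalence.
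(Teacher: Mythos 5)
Your proposal coincides with the paper's own proof, which consists precisely of the one-line instruction to rerun the induction of Proposition~\ref{prop_no_branch_rel_A} with the modified invariant $n_\rho^{\bDelta}$ and the lexicographic pair $(r_{\bDelta}, n_{\bDelta})$; your case-by-case elaboration is a faithful expansion of that instruction. The only point to state more carefully is that ``swapping the roles of $\alpha$ and $\beta$'' is not a literal symmetry, since the relation $(\alpha, \beta)$ is a path with $s \alpha = t \beta$: when the minimum is attained at $n_\beta^{\bDelta}$ one must pass to the dual argument and apply a \emph{coreflection} at $s \beta$, which is permitted by the BB-equivalence framework of Section~\ref{section_tilting} and still yields a derived equivalence.
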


In the next step of our proof we get rid of the branch arrows and
the branch triangles.

\begin{proposition} \label{prop_nobranchesAtilde}
Let $\bDelta \in \tilde{\calA}$. Then there exists a gentle quiver
$\bDelta'$ without branch arrows and branch triangles such that $k
\bDelta$ and $k \bDelta'$ are derived equivalent.
\end{proposition}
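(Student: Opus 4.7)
The plan is to adapt the strategy of Proposition~\ref{prop_no_branch_rel_A} to the more general setting of branch arrows and branch triangles. First I would invoke the preceding proposition to reduce to the case where $\bDelta$ already has no branch relations; the absence of branch relations guarantees that every branch arrow has only cycle arrows (or no arrow at all) adjacent to it at each endpoint in a controlled way, which is exactly what is needed for a reflection to be applicable. Throughout the induction I would maintain the invariant that $\bDelta$ has no branch relations, re-applying the previous proposition after each step if one is accidentally created.

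Next I would define a two-tier complexity measure. For each branch arrow $\alpha$, let $n_\alpha^{\bDelta}$ be the quantity already introduced (the number of vertices in the unique component of $\Delta \setminus \{\alpha\}$ not containing a strongly cycle vertex); extend this to a branch triangle $\calO$ by letting $n_\calO^{\bDelta}$ count the vertices separated from the strongly cycle subquiver when the unique attachment vertex of $\calO$ is isolated from it. Let $r_{\bDelta}$ denote the total number of branch arrows plus branch triangles, and let $N_{\bDelta} := \min\{ n_F^{\bDelta} \}$ over all branch features $F$ (with $\min \varnothing := \infty$). I would induct on the lexicographic pair $(r_{\bDelta}, N_{\bDelta})$. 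If $r_{\bDelta} = 0$, we are done. Otherwise, pick a branch feature $F$ with $n_F^{\bDelta} = N_{\bDelta}$; minimality places $F$ at the extreme end of the branch forest, so the vertex $x$ of $F$ farthest from the strongly cycle subquiver has no arrows $\delta$ with $s\delta = x$ whose composition with anything in $F$ lies in $R$, which is precisely the admissibility condition for a reflection at $x$. Forming $\bDelta'$ by reflecting at $x$ and performing a case analysis paralleling the three cases of Proposition~\ref{prop_no_branch_rel_A} (depending on whether $x$ is adjacent to a strongly cycle arrow, to another branch arrow, or to no other arrow), one verifies that either $r_{\bDelta'} < r_{\bDelta}$ or $r_{\bDelta'} = r_{\bDelta}$ and $N_{\bDelta'} < N_{\bDelta}$.

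The main obstacle will be the treatment of branch triangles. A branch triangle $\calO = \{\alpha_1,\alpha_2,\alpha_3\}$ is itself an oriented $3$-cycle with all three compositions lying in $R$, and only one vertex of $\calO$ is attached to the rest of $\Delta$. The relations inside $\calO$ obstruct reflections at the two ``hanging'' vertices unless one first reflects at a carefully chosen one: one must check that the completion-like structure forces the other vertex to have an arrow $\beta$ with $(\beta, \alpha) \notin R$ of the required form, so that the reflection is genuinely applicable. After the reflection, the triangle may either disappear from $\calC_{\bDelta'} / \bbZ$ (absorbed into the strongly cycle subquiver, in which case $r$ drops) or persist as a triangle with one of its arrows now lying on the strongly cycle subquiver (in which case $N$ drops). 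In both subcases one must re-examine whether a branch relation was introduced and, if so, eliminate it using the previous proposition before continuing the induction.

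Once the induction terminates, the resulting gentle quiver $\bDelta'$ is derived equivalent to $\bDelta$ (each BB-reflection gives a derived equivalence, and Theorem~\ref{theorem_AAG} together with membership in $\tilde{\calA}$ is preserved), has no branch relations, no branch arrows, and no branch triangles, which is the conclusion. Combined with the next step of the argument (expected to show that such a quiver is isomorphic to one obtained by completing isolated relations in a 1-cycle gentle quiver), this will yield Proposition~\ref{proposition_A_tilde} and hence Theorem~\ref{main_B}.
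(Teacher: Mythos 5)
Your overall scaffolding (reduce to no branch relations, then induct on a lexicographic pair whose first coordinate is the number of branch arrows plus branch triangles) matches the paper, but the heart of your argument --- where you reflect and what secondary quantity decreases --- is wrong, and the induction as you describe it does not advance.

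First, the choice of reflection vertex. A reflection never changes $|\Delta_1|$, so a branch arrow cannot be ``pushed off the end'' of a branch the way a branch \emph{relation} can in Proposition~\ref{prop_no_branch_rel_A}; it can only stop being a branch arrow by being absorbed into the strongly cycle subquiver. That absorption happens at the \emph{attachment vertex}, i.e.\ the strongly cycle vertex adjacent to the branch feature, and this is where the paper reflects. Your proposal reflects instead at the vertex of $F$ farthest from the strongly cycle subquiver. Take the extreme case $n_F^{\bDelta} = 1$: the far vertex $x$ is a leaf carrying a single pendant arrow. If $x$ is a sink the reflection at $x$ is admissible but merely reverses that pendant arrow and, since there are no branch relations, changes no relations at all; the quiver is unchanged up to reorientation of one arrow. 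Neither $r_{\bDelta}$ nor your $N_{\bDelta}$ decreases, so the induction stalls. (If $x$ is a source the reflection is not even admissible and you would have to coreflect, with the same null effect.)

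Second, the choice of secondary measure. Even if one reflects at the attachment vertex $x$ (where, by gentleness and the absence of branch relations, the two strongly cycle arrows $\alpha,\beta$ at $x$ satisfy $s\alpha = x = t\beta$ and $(\alpha,\beta)\in R$), whether this single reflection absorbs the branch feature or merely relocates it is governed not by the size of the hanging component but by whether the relation $(\alpha,\beta)$ extends to a longer antipath along the strongly cycle subquiver: if there is $\beta'$ with $(\beta,\beta')\in R$, the reflection expels the vertex $s\beta$ from the cycle and creates a new branch feature attached one step further along the relation chain, so $r$ is unchanged and the sizes of the hanging components get redistributed unpredictably. This is why the paper's secondary invariant $m_x$ measures the distance along the cycle from $x$ to the end of that chain of relations (or to a cycle triangle), a quantity that provably drops by one at each such step, rather than any count of vertices in the branch. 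Your $N_{\bDelta}$ is the invariant appropriate to eliminating branch relations (the previous proposition), not branch arrows or branch triangles, and transplanting it here is the essential gap.
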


\begin{proof}
According to the previous lemma we may assume that there are no
branch relations in $\bDelta$.

For a strongly cycle vertex $x$ we define the number $m_x'$ in the
following way: $m_x' := 0$ if either $x$ is adjacent to a cycle
triangle or for each strongly cycle arrow $\alpha$ such that $t
\alpha = x$ there is no $\beta \in \Delta_1$ such that $(\alpha,
\beta) \in R$. Otherwise, we put $m_x' := m_{s \alpha}' + 1$,
where $\alpha$ is the strongly cycle arrow terminating at $x$
(this definition is correct, since $\alpha \not \in
\calC_{\bDelta}$). We define $m_x''$ dually.

Let $\calV$ be the set of the strongly cycle vertices $x$ which
are adjacent either to a branch arrow or to a branch triangle. For
$x \in \calV$ we put $m_x := m_x'$ if either there is a branch
triangle adjacent to $x$ or there is a branch arrow terminating at
$x$. Otherwise, we put $m_x := m_x''$. Finally, let $m_{\bDelta}
:= \min \{ m_x \mid x \in \calV \}$ and denote by $r_{\bDelta}$
the sum of the numbers of the branch arrows and the branch
triangles in $\bDelta$.

Assume that $r_{\bDelta} >0$ and fix $x \in \calV$ with $m_x =
m_{\bDelta}$. Observe, that lack of branch relations in $\bDelta$
implies that there may be at most one branch arrow adjacent to
$x$. Consequently, by symmetry we may assume that if there is a
branch arrow adjacent to $x$, then it terminates at $x$. Let
$\alpha$ and $\beta$ be the strongly cycle arrows adjacent to $x$.
Since there are no branch relations in $\bDelta$, then (up to
symmetry) $s \alpha = x = t \beta$ and $(\alpha, \beta) \in R$.
Moreover, $\alpha \in \calC_{\bDelta}$ if and only if $\beta \in
\calC_{\bDelta}$. Finally, if $\alpha, \beta \in \calC_{\bDelta}$,
then they belong to the same triangle. Put $y := s \beta$.

Let $\bDelta'$ be the quiver obtained from $\bDelta$ by applying
the reflection at $x$ (we can apply the reflection at $x$ since
there are two arrows terminating at $x$). If $m_x = 0$, then one
easily checks that $r_{\bDelta'} < r_{\bDelta}$. On the other
hand, if $m_x > 0$ and there is a branch triangle adjacent to $x$,
then $r_{\bDelta'} = r_{\bDelta}$ and $m_{\bDelta'} <
m_{\bDelta}$. Finally, if $m_x > 0$ and there is a branch arrow
adjacent to $x$, then $r_{\bDelta''} = r_{\bDelta}$ and
$m_{\bDelta''} < m_{\bDelta}$, where $\bDelta''$ is the quiver
obtained from $\bDelta'$ by applying the reflection at $y$ (we can
apply the reflection at $y$ to $\bDelta'$, since in $\Delta'$
there are no arrows starting at $y$). Consequently, the claim
follows by induction.
\end{proof}

Let $\bDelta \in \tilde{\calA}$ and assume there are neither
branch arrows nor branch triangles in $\bDelta$. We investigate
its structure more closely.

First observe that for each triangle $\calO$ there exists uniquely
determined $\gamma_{\calO} \in \calO$ such that there are no
branch arrows in $\Delta \setminus \{ \gamma_{\calO} \}$. Indeed,
one easily observes that $\Delta \setminus \calO$ is not connected
(this follows since by removing one arrow from each triangle we
get a 1-cycle quiver). Now $\gamma_{\calO}$ is the arrow in
$\calO$ such that $\Delta \setminus (\calO \setminus \{
\gamma_{\calO} \})$ is still not connected. It follows from the
definition of cycle triangles that $\gamma_{\calO}$ is uniquely
determined and has the desired property.

Let $\bGamma := \bDelta \setminus \{ \gamma_{\calO} \mid \calO \in
\calC_{\bDelta} / \bbZ \}$. We call $\bGamma$ the standard model
of $\bDelta$. Note that $\Gamma$ is a cycle. If $\alpha \in
\Delta_1 \setminus \calC_{\bDelta}$, then we say that $\alpha$ is
clockwise (anticlockwise) oriented if $\alpha$ is clockwise
(anticlockwise, respectively) oriented in $\Gamma$. Similarly,
$\calO \in \calC_{\bDelta} / \bbZ$ is said to be clockwise
(anticlockwise) oriented if $\calO \setminus \{ \gamma_{\calO} \}$
consists of clockwise (anticlockwise, respectively) oriented
arrows in $\Gamma$.

By a free relation in $\bDelta$ we mean a relation $(\alpha,
\beta) \in R$ such that $\alpha, \beta \not \in \calC_{\bDelta}$.
We claim that if there are free relations in $\bDelta$, then the
relations in $\bGamma$ cannot be equioriented. Indeed, if, for
example, the relations in $\bGamma$ are clockwise oriented, then
$f_{\bGamma} = [p - r, p] + [q + r, q]$ according to
Proposition~\ref{prop_interpretation}, where $p$ and $q$ are the
numbers of the clockwise and the anticlockwise oriented arrows in
$\bGamma$, respectively, and $r$ is the number of the relations in
$\bGamma$. Consequently, an application of
Corollary~\ref{coro_f_completion} implies that
\[
f_{\bDelta} = m \cdot [0, 3] + [p - r - m, p - 2 m] + [q + r, q],
\]
where $m$ is the number of the completed relations. Now $p - r - m
\geq p - 2 m$, since $\bDelta \in \tilde{\calA}$, hence $r = m$,
which means that there are no free relations in $\bDelta$, and
this finishes the proof of the claim.

In the next step of our proof we eliminate the free relations.

\begin{proposition} \label{prop_finalreduction Atilde}
Let $\bDelta \in \tilde{\calA}$. Then there exists a gentle quiver
$\bDelta'$ without branch arrows, branch triangles and free
relations derived equivalent to $\bDelta$.
\end{proposition}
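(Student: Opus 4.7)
The plan is to first invoke Proposition~\ref{prop_nobranchesAtilde} so that, without loss of generality, $\bDelta$ already has no branch arrows and no branch triangles. The remaining task is then to eliminate free relations while preserving these two properties, and I will do this by induction on the number $s_\bDelta$ of free relations of $\bDelta$; derived equivalence between the initial and final quivers will follow from the fact that each reduction step is a BB-(co)reflection.

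Suppose $s_\bDelta > 0$. By the observation recorded just before the proposition, the existence of a free relation forces the relations on the standard model cycle $\bGamma$ to occur in both orientations. I would therefore choose a free relation $(\alpha,\beta) \in R$ minimizing the distance, along $\bGamma$ in one of the two cyclic directions, to the next relation of the opposite orientation or to the next triangle. This yields a short directed subpath of $\bGamma$ bounded either by an opposite-oriented relation or by a triangle arrow, which serves as the local region on which the reflection will act.

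Using this configuration, I would apply a BB-reflection (or, dually, a BB-coreflection) at a suitable endpoint $x$ of this subpath. The second arrow required by the reflection hypothesis of Section~\ref{section_tilting} is supplied either by the opposite-oriented arrow at the turning point or by an arrow of an adjacent triangle. A local case analysis analogous to cases (1)--(3) in the proof of Proposition~\ref{prop_no_branch_rel_A} should show that after the reflection either $(\alpha,\beta)$ is replaced by a relation in which at least one arrow lies in $\calC_{\bDelta'}$, so is no longer free, or the pair of opposite-oriented relations has moved strictly closer together; in the second case, iterating eventually forces the first case to occur.

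The main obstacle is the case analysis verifying three things simultaneously: (i) the reflection hypothesis of Section~\ref{section_tilting} is satisfied at the chosen vertex, (ii) no new branch arrows or branch triangles are created, and (iii) either $s_{\bDelta'}<s_\bDelta$, or $s_{\bDelta'}=s_\bDelta$ with strictly smaller distance invariant, so that a lexicographic induction closes. Because the absence of branch arrows and branch triangles constrains the local shape of the quiver around $x$ to a small number of possibilities, all these checks should be handled by the same kind of diagrammatic analysis as in Proposition~\ref{prop_no_branch_rel_A}; the genuinely new ingredient is the use of the non-equioriented observation to guarantee that a usable turning configuration always exists as long as $s_\bDelta>0$.
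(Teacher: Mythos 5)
Your overall strategy coincides with the paper's: first invoke Proposition~\ref{prop_nobranchesAtilde}, then use the non-equiorientation observation to define a distance $k_\rho$ from each free relation to the nearest oppositely oriented relation of the standard model, and run an induction on the pair (number of free relations, minimal distance). However, there is a concrete gap in the inductive step as you describe it. Your claimed dichotomy --- a single reflection at an endpoint of the chosen segment either renders the relation non-free or strictly decreases the distance --- fails, because the arrows $\gamma_1,\dots,\gamma_k$ lying strictly between the minimizing free relation $(\alpha,\beta)$ and its oppositely oriented partner can be oriented arbitrarily and may belong to triangles. For instance, if the arrow $\gamma_1$ adjacent to $t\alpha$ is anticlockwise, then reflecting at $t\alpha$ turns the old source of $\alpha$ into a pendant vertex, i.e., it creates a branch arrow and destroys property (ii); and reflections performed elsewhere on the segment change neither $s_{\bDelta}$ nor $k_{\bDelta}$, so your two-component lexicographic order does not decrease.

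The paper repairs exactly this point by inserting a normalization sub-induction: it first ``orders'' the segment (all clockwise arrows not in $\calC_{\bDelta}$ pushed next to $x_0$) using reflections at the interior vertices $x_i$, and the quantity that decreases during this phase is neither $s$ nor $k$ but a count of disordered pairs $(i,j)$ --- effectively a third component of the lexicographic invariant. Only after reducing to $l=k=0$ is the free relation eliminated, and even then this takes a composite of two or three reflections (at $x_0$ and $y'$, or at $x_0$, $y$ and $y'$, according to whether $\alpha'\in\calC_{\bDelta}$), not one. So your plan is salvageable and is in spirit the paper's proof, but you must enlarge the induction invariant to a triple and allow multi-step reflection sequences in the terminal case; as written, the induction does not close.
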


\begin{proof}
According to Proposition~\ref{prop_nobranchesAtilde} we may assume
that there are neither branch arrows nor branch triangles in
$\bDelta$. We say that a free relation $(\alpha, \beta)$ in
$\bDelta$ is clockwise (anticlockwise) oriented if $\alpha$ and
$\beta$ are clockwise (anticlockwise, respectively) oriented. Let
$\bGamma$ be the standard model of $\bDelta$. For each free
relation $\rho$ in $\bDelta$ we denote by $k_\rho$ the minimal
distance between $\rho$ and a oppositely oriented relation in
$\bGamma$ (this it well-defined according to the above
considerations). We put
\[
k_{\bDelta} := \min \{ k_\rho \mid \text{$\rho$ is a free relation
in $\bDelta$} \}
\]
and denote by $s_{\bDelta}$ the number of the free relations in
$\bDelta$.

Assume that $s_{\bDelta} > 0$ and fix a free relation $(\alpha,
\beta)$ in $\bDelta$ with $k := k_{(\alpha, \beta)} =
k_{\bDelta}$. We may assume that $\rho$ is clockwise oriented.
Then we have the following subquiver of $\Gamma$, where $\bGamma$
is the standard model of $\bDelta$,
\[
\xymatrix{\bullet \ar[r]^\beta & \vertexU{y} \ar[r]^\alpha &
\vertexU{x_0} \ar@{-}[r]^{\gamma_1} &\vertexU{x_1} \ar@{-}[r] &
\cdots \ar@{-}[r] & \vertexU{x_{k - 1}} \ar@{-}[r]^{\gamma_k} &
\vertexU{x_k} & \vertexU{y'} \ar[l]_{\alpha'} & \bullet
\ar[l]_{\beta'}}
\]
such that $(\alpha', \beta') \in R$. The minimality of $k$ implies
that there is no $i \in [1, k - 1]$ such that $(\gamma_i,
\gamma_{i + 1}) \in R$. For the same reason, if $(\gamma_{i + 1},
\gamma_i) \in R$ for some $i \in [1, k - 1]$, then $\gamma_i,
\gamma_{i + 1} \in \calC_{\bDelta}$ and they belong to the same
triangle.

First we show that we may assume that the quiver is ordered in the
following sense: there exists $l \in [0, k]$ such that the
following conditions are satisfied:
\begin{enumerate}

\item
if $i \in [1, l]$, then $\gamma_i$ is clockwise oriented and
$\gamma_i \not \in \calC_{\bDelta}$,

\item
if $i \in [l + 1, k]$ and $\gamma_i$ is clockwise oriented, then
$\gamma_i \in \calC_{\bDelta}$.

\end{enumerate}
Indeed, assume not. Then there exists $i \in [1, k - 1]$ such that
$\gamma_{i + 1}$ is clockwise oriented and either $\gamma_i$ is
anticlockwise oriented or $\gamma_i \in \calC_{\bDelta}$. One
easily checks that by applying the reflection at $x_i$ we
``improve'' the configuration (i.e., we decrease the number of the
pairs $(i, j)$ such that $i, j \in [1, k]$, $i > j$, $\gamma_i$ is
clockwise oriented, $\gamma_i \not \in \calC_{\bDelta}$, and
either $\gamma_j$ is anticlockwise oriented or $\gamma_j \in
\calC_{\bDelta}$), hence the claim follows by induction.

Next, we show that we may assume that $0 = l = k$. Indeed, if $l >
0$, then by applying the reflection at $x_0$ we obtain the quiver
$\bDelta'$ (without branch arrows and branch triangles) with
$s_{\bDelta'} = s_{\bDelta}$ and $k_{\bDelta'} < k_{\bDelta}$.
Similarly, if $l < k$ and $\bDelta'$ is the quiver obtained from
$\bDelta$ by applying either the reflection at $x_k$ (if $\alpha'
\not \in \calC_{\bDelta}$) or the coreflection at $x_k$
(otherwise), then $s_{\bDelta'} = s_{\bDelta}$ and $k_{\bDelta'} <
k_{\bDelta}$ (note that if $l < k$ and $\gamma_k \in
\calC_{\bDelta}$, then the minimality of $k$ implies that $\alpha'
\in \calC_{\bDelta}$).

Now we have two cases to consider, depending on whether $\alpha'$
belongs or not to $\calC_{\bDelta}$. If $\alpha' \in
\calC_{\bDelta}$, then by applying the reflections at $x_0$ and
$y'$ we obtain the quiver $\bDelta'$ with $s_{\bDelta'} <
s_{\bDelta}$. On the other hand, if $\alpha' \not \in
\calC_{\bDelta}$, then we apply the reflections at $x_0$, $y$ and
$y'$, and we obtain the quiver $\bDelta'$ such that $s_{\bDelta'}
< s_{\bDelta}$. By induction this finishes the proof.
\end{proof}

In view of the above proposition the following claim finishes the
proof of Proposition~\ref{proposition_A_tilde}.

\begin{proposition}
Let $\bDelta \in \tilde{\calA}$ be a gentle quiver without branch
arrows, branch triangles, and free relations. Then $k \bDelta$ is
a cluster tilted algebra of type $\tilde{\bbA}$.
\end{proposition}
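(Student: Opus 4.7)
My plan is to realize $\bDelta$ as the completion of a $1$-cycle gentle quiver along a set of isolated relations, and then invoke Proposition~\ref{proposition_cluster_A_tilde}(3). The natural candidate for the $1$-cycle gentle quiver is the standard model $\bGamma := \bDelta \setminus \{\gamma_\calO \mid \calO \in \calC_\bDelta / \bbZ\}$, which was already observed to be a cycle. Gentleness of $\bGamma$ is inherited from $\bDelta$, since the four gentle axioms are preserved under deleting arrows together with the relations that involve them.

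The heart of the argument is the description of the surviving relation set $R_\bGamma$. For each triangle $\calO$ I write $\calO = \{\alpha_0^\calO, \alpha_1^\calO, \alpha_2^\calO\}$ with $\gamma_\calO = \alpha_0^\calO$ and $(\alpha_i^\calO, \alpha_{i+1}^\calO) \in R$ (indices mod $3$); then $\rho_\calO := (\alpha_1^\calO, \alpha_2^\calO)$ clearly belongs to $R_\bGamma$. I claim these are all the relations of $\bGamma$, i.e., $R_\bGamma = \{\rho_\calO \mid \calO \in \calC_\bDelta / \bbZ\}$. Indeed, take $(\alpha, \beta) \in R_\bGamma$. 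Since $\bDelta$ has no free relations, at least one of $\alpha, \beta$ lies in $\calC_\bDelta$; since $\alpha, \beta \in \Gamma_1$, neither equals $\gamma_{\calO'}$ for any triangle $\calO'$; so at least one of $\alpha, \beta$ equals $\alpha_1^\calO$ or $\alpha_2^\calO$ for some triangle $\calO$. The third gentle axiom (each arrow has at most one relation partner on either side) then forces $(\alpha, \beta) = (\alpha_1^\calO, \alpha_2^\calO)$, because within $\calO$ the only alternative partner in each case is $\gamma_\calO = \alpha_0^\calO$, which has been removed from $\Gamma_1$.

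Next I show that each $\rho_\calO$ is isolated in $\bGamma$. By the third gentle axiom, the only arrow of $\Delta_1$ that could extend $(\alpha_1^\calO, \alpha_2^\calO)$ on either side into a longer antipath is $\alpha_0^\calO = \gamma_\calO$, and this arrow has been removed from $\Gamma_1$; hence $\rho_\calO \in \calN_\bGamma$. Finally, I verify that $\bDelta$ is obtained from $\bGamma$ by completing the relations $\{\rho_\calO\}_\calO$: the completion introduces, for each $\calO$, a new arrow from $s\alpha_2^\calO$ to $t\alpha_1^\calO$ (precisely the position of $\gamma_\calO$) together with the two new relations $(\gamma_\calO, \alpha_1^\calO)$ and $(\alpha_2^\calO, \gamma_\calO)$. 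By the gentle axiom these exhaust the relations in $R$ involving any $\gamma_\calO$, so the completion reconstructs $\bDelta$ exactly, and Proposition~\ref{proposition_cluster_A_tilde}(3) yields the conclusion.

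The main obstacle is the characterization of $R_\bGamma$: one has to rule out any relation surviving into $R_\bGamma$ apart from the triangle relations $\rho_\calO$. This is precisely where the three standing hypotheses on $\bDelta$ (no branch arrows, no branch triangles, no free relations) all become essential, combined with the gentle uniqueness-of-partner axiom, to leave no room for exotic surviving relations.
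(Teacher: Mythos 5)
Your proposal is correct and follows essentially the same route as the paper, whose proof is the one-line observation that, in the absence of free relations, $\bDelta$ is obtained from its standard model $\bGamma$ by completing all relations, so that Proposition~\ref{proposition_cluster_A_tilde}(3) applies; you have simply supplied the details (identification of $R_\bGamma$ with the set of triangle relations $\rho_\calO$, their isolatedness, and the reconstruction of $\bDelta$ by completion) that the paper leaves implicit. The only blemish is a harmless transposition of source and target in your description of the new arrow $\gamma_{\rho_\calO}$, which should run from $t\alpha_1^{\calO}$ to $s\alpha_2^{\calO}$.
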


\begin{proof}
Let $\bGamma$ be the standard model of $\bDelta$. Lack of free
relations in $\bDelta$ implies that $\bDelta$ is obtained from
$\bGamma$ by completing all relations, and this finishes the proof
according to Proposition~\ref{proposition_cluster_A_tilde}.
\end{proof}

\section{Proof of Theorem~\ref{main_D}} \label{section_D}

First, we prove the only missing ingredient of the proof. If
$\Delta$ is a quiver, then by a $3$-cycle in $\Delta$ we mean
every sequence $(\alpha_0, \alpha_1, \alpha_2)$ of arrows such
that $\alpha_0 \neq \alpha_1 \neq \alpha_2 \neq \alpha_0$ and
$(\alpha_0, \alpha_1, \alpha_2, \alpha_0)$ is a path in $\Delta$.
We identify $3$-cycles which differ only by a cyclic permutation.

\begin{proposition} \label{proposition_BB_A}
Let $\bDelta \in \calA$ and $m$ be the number of the $3$-cycles in
$\Delta$. Then
\[
f_{\bDelta} = m \cdot [0, 3] + [|\Delta_0| + 1 - m, |\Delta_0| - 1
- 2 m].
\]
\end{proposition}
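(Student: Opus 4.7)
Since $\bDelta \in \calA$, by definition $f_\bDelta = m' \cdot [0,3] + [p + m' + 2, p]$ for some $m', p \in \bbN$, where $m'$ is the number of triangle orbits in $\calC_\bDelta / \bbZ$. Applying the lemma in Section~\ref{section_AAG} that expresses $\sum_{\calO} p(\calO)$ and $\sum_{\calO} q(\calO)$ in terms of $|\Delta_0|$ and $|\Delta_1|$, I obtain $p + m' + 2 = 2|\Delta_0| - |\Delta_1|$ and $3m' + p = |\Delta_1|$. Adding these yields $|\Delta_0| = 2m' + p + 1$, so $p = |\Delta_0| - 1 - 2m'$ and $p + m' + 2 = |\Delta_0| + 1 - m'$. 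The proposition therefore reduces to proving $m = m'$, where $m$ is the number of $3$-cycles in $\Delta$.

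I prove $m = m'$ by constructing a bijection between the triangle orbits in $\calC_\bDelta / \bbZ$ and the $3$-cycles in $\Delta$. For one direction, every triangle orbit $\calO = \{\alpha_0, \alpha_1, \alpha_2\} \subset \calC_\bDelta$ gives rise, by the diagrammatic description following the definition of $f_\bDelta(0,3)$ in Section~\ref{section_AAG}, to a directed $3$-cycle in $\Delta$ with all three consecutive pairs in $R$. Since $\calC_\bDelta / \bbZ$ is a partition of $\calC_\bDelta$, distinct orbits are disjoint as subsets of $\Delta_1$, so this assignment is injective into the set of $3$-cycles.

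For the converse, I use Lemma~\ref{lemma_BV}: a model $\bDelta'$ of $\bDelta$ obtained by removing one arrow $\alpha_i$ from each triangle orbit $\calO_i$ is a gentle quiver of tree type, so the underlying multigraph of $\Delta'$ is a spanning tree of the underlying multigraph of $\Delta$. Consequently the cycle space $Z \subset \mathbb{F}_2^{\Delta_1}$ has dimension $m'$, and the $m'$ fundamental cycles associated with the removed arrows $\alpha_i$ form a basis of $Z$; since the unique path in $\Delta'$ between the two endpoints of each $\alpha_i$ consists of the other two arrows of $\calO_i$, these fundamental cycles are precisely the edge sets of the triangle orbits $\calO_i$. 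Now given any $3$-cycle $(\alpha_0, \alpha_1, \alpha_2)$, its edge set $C = \{\alpha_0, \alpha_1, \alpha_2\}$ is a simple cycle of the underlying graph and so lies in $Z$; expanding $C = \calO_{i_1} + \cdots + \calO_{i_k}$ in the basis, pairwise disjointness of triangles turns the $\mathbb{F}_2$-sum into a literal disjoint union of cardinality $3k$, and $|C| = 3$ forces $k = 1$ and $C = \calO_{i_1}$. This provides the inverse map and completes the bijection. The main obstacle is precisely this converse direction, whose resolution hinges on identifying the triangles with the fundamental-cycle basis of $Z$ via the tree-type model, together with the pairwise disjointness of triangles that forces any element of $Z$ to have cardinality a multiple of~$3$, leaving no room for a $3$-cycle that is not itself a single triangle orbit.
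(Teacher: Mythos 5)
Your proof is correct, and it diverges from the paper's in both halves. For the numerical identities, the paper computes $f_{\bDelta}$ directly from a model $\bDelta'$ via Lemma~\ref{lemma_gentle_A} and Corollary~\ref{coro_f_completion}, whereas you start from the defining form $m'\cdot[0,3]+[p+m'+2,p]$ of membership in $\calA$ and extract $p=|\Delta_0|-1-2m'$ from the counting lemma $\sum p(\calO)=2|\Delta_0|-|\Delta_1|$, $\sum q(\calO)=|\Delta_1|$; both routes are legitimate and of comparable length. The real divergence is in the converse direction of the bijection between triangles and $3$-cycles. The paper argues directly: if $\{\alpha_0,\alpha_1,\alpha_2\}$ is a $3$-cycle that is not a triangle, then each triangle (being a $3$-element set distinct from it) contains an arrow outside it, so one can choose a model $\bDelta''$ retaining all three arrows, and $\bDelta''$ then fails to be of tree type, contradicting Lemma~\ref{lemma_BV}. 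You instead identify the triangles with the fundamental-cycle basis of the $\mathbb{F}_2$-cycle space relative to the spanning tree $\Delta'$, and use pairwise disjointness of triangles to conclude that every element of the cycle space has cardinality a multiple of $3$, so a weight-$3$ element must be a single triangle. Your argument uses slightly more machinery but yields a stronger structural statement about the cycle space; the paper's is the more economical ad hoc version of the same underlying fact, and both rest on Lemma~\ref{lemma_BV}. The only points you assert without proof --- that $m'$ equals the number of triangles (i.e.\ that every orbit in $\calC_{\bDelta}/\bbZ$ is a triangle when $\bDelta\in\calA$) and that triangles have three distinct vertices so their fundamental cycles really have length $3$ --- are observations the paper itself records and uses without further comment, so they are not gaps.
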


\begin{proof}
Let $\bDelta'$ be a model of $\bDelta$. Lemma~\ref{lemma_gentle_A}
implies that
\[
f_{\bDelta'} = [|\Delta_0'| + 1, |\Delta_0'| - 1].
\]
Since $|\Delta_0'| = |\Delta_0|$, we obtain using
Corollary~\ref{coro_f_completion} that
\[
f_{\bDelta} = m \cdot [0, 3] + [|\Delta_0| + 1 - m, |\Delta_0| - 1
- 2 m],
\]
where $m$ is the number of the triangles in $\bDelta$. Now, it is
easy to observe that there is a bijection between the triangles in
$\bDelta$ and the $3$-cycles in $\Delta$, which finishes the
proof. Indeed, if $\{ \alpha, \Psi_{\bDelta} \alpha,
\Psi_{\bDelta}^2 \alpha \}$ is a triangle in $\bDelta$, then
$(\alpha, \Psi_{\bDelta} \alpha, \Psi_{\bDelta}^2 \alpha)$ is a
$3$-cycle. On the other hand, assume that $(\alpha_0, \alpha_1,
\alpha_2)$ is a $3$-cycle in $\Delta$. If $\{ \alpha_0, \alpha_1,
\alpha_2 \}$ is not a triangle, then there exists a model
$\bDelta''$ of $\bDelta$ such that $\alpha_0, \alpha_1, \alpha_2
\in \Delta_1''$. However, $\bDelta''$ is not of tree type, which
contradicts Lemma~\ref{lemma_BV}.
\end{proof}

As an immediate consequence we obtain the following reformulation
of~\cite{BV}*{Theorem}.

\begin{corollary}
Let $\Lambda$ and $\Lambda'$ be cluster tilted algebras of type
$\bbA$. Then $\Lambda$ and $\Lambda'$ are derived equivalent if
and only if $f_\Lambda = f_{\Lambda'}$.
\end{corollary}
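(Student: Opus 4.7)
The plan is to read off the proof directly from Proposition~\ref{proposition_BB_A} combined with the already-cited classification of~\cite{BV}. The forward implication is just a specialization of Theorem~\ref{theorem_AAG}, since cluster tilted algebras of type $\bbA$ are gentle and $f_{\Lambda}$ is invariant under derived equivalence.

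For the reverse implication I would proceed as follows. Write $\Lambda = k \bDelta$ and $\Lambda' = k \bDelta'$ with $\bDelta, \bDelta' \in \calA$ by Corollary~\ref{corollary_cluster_A}. Applying Proposition~\ref{proposition_BB_A} to both algebras expresses
\[
f_{\bDelta} = m \cdot [0, 3] + [|\Delta_0| + 1 - m, |\Delta_0| - 1 - 2 m]
\]
and similarly for $\bDelta'$ in terms of $m'$ and $|\Delta_0'|$, where $m$, $m'$ are the numbers of $3$-cycles in $\Delta$ and $\Delta'$, respectively. From $f_\Lambda = f_{\Lambda'}$ we then immediately read off $m = m'$ (from the coefficient of $[0, 3]$) and $|\Delta_0| = |\Delta_0'|$ (from the shift identified by the remaining summand). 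So the two combinatorial invariants coincide.

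The next step is to invoke~\cite{BV}*{Theorem}, which asserts that cluster tilted algebras of type $\bbA$ are classified up to derived equivalence precisely by the number of vertices of the quiver together with the number of $3$-cycles. Since these data agree for $\Lambda$ and $\Lambda'$, we conclude that $\Lambda$ and $\Lambda'$ are derived equivalent, which finishes the proof.

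There is essentially no obstacle here: the whole point of the corollary is that Proposition~\ref{proposition_BB_A} has already translated the numerical invariants of~\cite{BV} into the language of $f_{\bDelta}$. The only mild care needed is checking that Proposition~\ref{proposition_BB_A} applies, which holds because Corollary~\ref{corollary_cluster_A} places $\bDelta$ and $\bDelta'$ in the class $\calA$, and that the map $(m, |\Delta_0|) \mapsto f_{\bDelta}$ given by the proposition is injective, which is evident from the explicit formula.
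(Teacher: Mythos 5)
Your argument is exactly the one the paper intends: it states this corollary as an immediate consequence of Proposition~\ref{proposition_BB_A} together with the theorem of~\cite{BV}, and your write-up simply unpacks that, including the (easy) injectivity check that the summand $[|\Delta_0|+1-m,\,|\Delta_0|-1-2m]$ can never itself equal $[0,3]$. Correct, and essentially the same approach as the paper.
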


Now we can finish the proof of Theorem~\ref{main_D}. Let $\Lambda$
and $\Lambda'$ be gentle algebras derived equivalent to cluster
tilted algebras $C$ and $C'$, respectively.
Corollary~\ref{corollary_ABChJP} implies that $C$ and $C'$ are of
types $\bbA$ or $\tilde{\bbA}$. Now, using
Corollary~\ref{corollary_cluster_A} and
Proposition~\ref{proposition_A} (in the $\bbA$-case), and
Corollary~\ref{corollary_cluster_A_tilde} and
Proposition~\ref{proposition_A_tilde} (in the
$\tilde{\bbA}$-case), we get that we may assume that $\Lambda$ and
$\Lambda'$ are BB-equivalent to $C$ and $C'$, respectively.

(1)$\implies$(2) Assume that $\Lambda$ and $\Lambda'$ are derived
equivalent. Then $C$ and $C'$ are derived equivalent, hence $C$
and $C'$ are BB-equivalent according to
Theorems~\ref{theorem_BB_A} ($\bbA$-case)
and~\ref{theorem_BB_A_tilde} ($\tilde{\bbA}$-case), thus also
$\Lambda$ and $\Lambda'$ are BB-equivalent.

(2)$\implies$(3) Follows from Theorem~\ref{theorem_AAG}.

(3)$\implies$(1) Assume that $f_\Lambda = f_{\Lambda'}$. Then $f_C
= f_{C'}$, hence $C$ and $C'$ are derived equivalent according to
Proposition~\ref{proposition_BB_A} ($\bbA$-case) and
Theorem~\ref{theorem_BB_A_tilde} ($\tilde{\bbA}$-case).
Consequently, $\Lambda$ and $\Lambda'$ are also derived
equivalent, which finishes the proof.

\section{Proof of Theorem~\ref{main_E}} \label{section_E}

Recall that the gentle algebras are Gorenstein. More precisely, we
have the following~\cite{GR}*{Theorem 3.4}.

\begin{theorem} \label{theo_Geiss_Reiten}
Let $\Lambda$ be the path algebra of a gentle quiver $\Delta$.
Then \[ \Gdim \Lambda = \max \{ \ell (\omega) \mid \omega \in
\calN_{\bDelta} \}
\]
if $\calN_{\bDelta} \neq \varnothing$, and $\Gdim \Lambda \leq 1$,
otherwise. In particular, $\Gdim \Lambda \leq 1$ if and only if
$\ell (\omega) \leq 1$ for each $\omega \in \calN_{\bDelta}$.
\end{theorem}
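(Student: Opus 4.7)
The plan is to compute $\Gdim \Lambda$ via $\Ext$-groups and identify these combinatorially using minimal projective resolutions of simple modules in the gentle algebra.

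First, I would invoke the standard identity for finite-dimensional algebras
\[
\Gdim \Lambda = \operatorname{id}_\Lambda \Lambda = \sup\bigl\{ n \geq 0 : \Ext_\Lambda^n(S_x, P(y)) \neq 0 \text{ for some } x, y \in \Delta_0 \bigr\},
\]
so it suffices to compute these groups and to identify when they are nonzero.

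Second, I would describe the minimal projective resolution of each simple module $S_x$ combinatorially. For a gentle algebra $\Lambda = k \bDelta$, one shows inductively that the $n$-th term of the minimal projective resolution of $S_x$ has the form
\[
P_n = \bigoplus_{\omega = (\alpha_1, \ldots, \alpha_n)} P(t \alpha_1),
\]
where $\omega$ ranges over antipaths in $\bDelta$ of length $n$ with $s \alpha_n = x$, and the differential sends the generator of the summand indexed by $\omega$ to $\alpha_1$ times the generator of the summand indexed by $(\alpha_2, \ldots, \alpha_n)$. The gentle conditions---at most two arrows at each vertex in each direction, and each arrow belonging to at most one relation on each side---ensure minimality of this resolution and correctness of the direct sum decomposition.

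Third, applying $\Hom_\Lambda(-, P(y))$ and tracking basis elements, I would show that $\Ext^n(S_x, P(y)) \neq 0$ if and only if there exists an antipath $\omega = (\alpha_1, \ldots, \alpha_n)$ with $s \alpha_n = x$ and $t \alpha_1 = y$ that cannot be prolonged on either side, i.e., $\omega \in \calN_\bDelta$. This gives
\[
\sup \{ n : \Ext^n(S_x, P(y)) \neq 0 \text{ for some } x, y \} = \max \{ \ell(\omega) : \omega \in \calN_\bDelta \},
\]
as desired. In the degenerate case $\calN_\bDelta = \varnothing$ (which forces $R = \varnothing$, so $\Lambda$ is hereditary), we have $\Gdim \Lambda \leq 1$ directly.

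The main obstacle is the second step, namely the inductive combinatorial description of the minimal projective resolution. One must verify that at each step the syzygy splits cleanly as a direct sum of string modules indexed by antipaths, with no collapsing of summands or extra terms. This relies crucially on the gentle hypotheses; the analogous statement fails for general special biserial algebras. Once this explicit resolution is established, the identification of nonvanishing $\Ext$-classes with maximal antipaths follows from a direct cohomological calculation.
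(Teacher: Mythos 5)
The paper does not actually prove this statement; it is imported verbatim from Gei\ss--Reiten \cite{GR}*{Theorem~3.4}, so there is no internal argument to compare yours against. Your strategy (compute $\operatorname{id}\Lambda$ via the groups $\Ext^n(S_x,P(y))$ using the antipath-indexed minimal projective resolution of the simples) is the standard route, and your first two steps are correct: for a quadratic monomial, in particular gentle, algebra the $n$-th syzygy of $S_x$ is indeed a direct sum of uniserial modules indexed by the antipaths of length $n$ ending at $x$.

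The gap is in step 3: the asserted equivalence ``$\Ext^n(S_x,P(y))\neq 0$ iff there is a \emph{maximal} antipath of length $n$ from $x$ to $y$'' is false. Take $\Lambda=k(1\xleftarrow{\alpha}2\xleftarrow{\beta}3)/(\alpha\beta)$. From the resolution $0\to P(1)\to P(2)\to S_2\to 0$ one gets $\Ext^1(S_2,P(1))\cong\Hom(P(1),P(1))\cong k\neq 0$ (since $\Hom(P(2),P(1))=0$), yet the unique length-$1$ antipath joining these vertices, namely $(\alpha)$, prolongs to $(\alpha,\beta)$ and so does not lie in $\calN_{\bDelta}$. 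What the resolution actually yields, for $n\geq 2$ where the complex $\Hom(P_\bullet,P(y))$ splits into independent ``towers'' (one for each arrow adjacent to $x$), is that $\Ext^n(S_x,P(y))\neq 0$ for some $y$ exactly when some antipath of length $n$ through $x$ cannot be prolonged at its \emph{far} end; prolongability at the $x$-end is irrelevant, and in degree $1$ there is a further contribution coming from the interaction of the two towers through $P_0$. Hence your criterion does not directly give the upper bound $\operatorname{id}\Lambda\leq\max\{\ell(\omega)\mid\omega\in\calN_{\bDelta}\}$. The argument can be repaired: any antipath non-prolongable at one end extends, by prolonging at the other end, to an element of $\calN_{\bDelta}$ of at least the same length (the process terminates, because a repeated arrow would place the antipath on a relation cycle, where it is prolongable at both ends), and one checks separately that the degree-$\leq 1$ contributions never exceed this maximum; but this supplementary argument is precisely what is missing. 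A smaller slip: $\calN_{\bDelta}=\varnothing$ does not ``force $R=\varnothing$''. In fact $\calN_{\bDelta}=\varnothing$ cannot occur for a (finite-dimensional) gentle algebra, since $|\calN_{\bDelta}|=|\calM_{\bDelta}|=2|\Delta_0|-|\Delta_1|$ by the counting lemma of Section~3 and $\calM_{\bDelta}=\varnothing$ would produce arbitrarily long relation-free paths; the configurations approaching this degenerate case are ones with many relations arranged in cycles, not with none.
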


Now we describe the algebras derived equivalent to cluster tilted
algebras of type $\bbA$ in terms of their quivers.

\begin{proposition}
An algebra $\Lambda$ is derived equivalent to a cluster tilted
algebra of type $\bbA$ if and only if there exists a gentle quiver
$\bDelta$ of tree type and a subset $R_0 \subset R$ consisting of
isolated relations, such that $\Lambda$ is isomorphic to the path
algebra of the quiver obtained from $\bDelta$ by completing the
relations from $R_0$.
\end{proposition}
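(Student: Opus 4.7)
The plan is to treat the two directions separately, using Main Theorem~\ref{main_A} as the bridge between the combinatorial description and derived equivalence, together with the basic machinery from Section~\ref{section_completion}.

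For the \emph{if} direction, let $\bDelta$ be a gentle quiver of tree type, $R_0 \subset R$ a set of isolated relations, and $\bDelta''$ the completion. First I would compute $f_{\bDelta''}$: by Lemma~\ref{lemma_gentle_A}, $f_{\bDelta} = [|\Delta_0|+1, |\Delta_0|-1]$, and Corollary~\ref{coro_f_completion} (applied with the single orbit carrying $p = |\Delta_0|+1$, $q = |\Delta_0|-1$, and $m(\calO) = |R_0|$) yields
\[
f_{\bDelta''} = |R_0|\cdot[0,3] + [(|\Delta_0|+1) - |R_0|, (|\Delta_0|-1) - 2|R_0|].
\]
To invoke Main Theorem~\ref{main_A} I need the second coordinate nonnegative; this follows from the structural observation that each isolated relation $(\alpha,\beta)$ uses two arrows that appear in no other relation whatsoever (condition (3) of gentle blocks a second $R$-neighbor on either side of $\alpha$ and $\beta$, while maximality of the antipath blocks an incoming/outgoing extending relation). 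Hence the $|R_0|$ isolated relations use $2|R_0|$ distinct arrows, giving $2|R_0| \leq |\Delta_1| = |\Delta_0|-1$. Main Theorem~\ref{main_A} then furnishes the derived equivalence.

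For the \emph{only if} direction, \cite{SZ} forces $\Lambda$ to be gentle, so $\Lambda \simeq k\bGamma$ for some gentle quiver $\bGamma$, and by Corollary~\ref{corollary_cluster_A} we have $\bGamma \in \calA$. Let $\calO_1,\ldots,\calO_m$ be the triangles of $\bGamma$; for each $i$ pick $\alpha_i \in \calO_i$, and let $\beta_i,\gamma_i$ be the remaining two arrows of the triangle, labelled so that $(\beta_i,\gamma_i) \in R_{\bGamma}$. Set
\[
\bDelta := \bGamma \setminus \{\alpha_1,\ldots,\alpha_m\}, \qquad R_0 := \{(\beta_i,\gamma_i) \mid i \in [1,m]\}.
\]
Lemma~\ref{lemma_BV} already gives that $\bDelta$ is a gentle quiver of tree type and each $(\beta_i,\gamma_i)$ visibly survives in $R_{\bDelta}$. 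It remains to show (a) each $\rho_i := (\beta_i,\gamma_i)$ is isolated in $\bDelta$, and (b) completing $\bDelta$ by $R_0$ recovers $\bGamma$ up to isomorphism.

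Both reduce to a local analysis at the triangle: gentle condition (3) applied in $\bGamma$ tells us that $\alpha_i$ is the \emph{unique} arrow satisfying $(\alpha_i,\beta_i)\in R_{\bGamma}$ and also the unique arrow with $(\gamma_i,\alpha_i)\in R_{\bGamma}$, so upon removal no arrow of $\bDelta$ can extend $\rho_i$ into a longer antipath on either side, giving (a). For (b), the added arrow $\gamma_{\rho_i}$ has by definition source $t\beta_i$ and target $s\gamma_i$, matching the endpoints of $\alpha_i$, and the new relations $(\gamma_{\rho_i},\beta_i)$ and $(\gamma_i,\gamma_{\rho_i})$ match the triangle relations $(\alpha_i,\beta_i)$ and $(\gamma_i,\alpha_i)$; sending $\gamma_{\rho_i}\mapsto\alpha_i$ and fixing the remaining arrows gives the required isomorphism. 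The main delicate point is (a), where one must rule out a long-range antipath that could conceivably pass through vertices of a removed triangle and secretly extend $\rho_i$; this is exactly what the uniqueness in gentle condition (3) rules out, because the would-be first extending arrow at either end of $\rho_i$ is forced to be $\alpha_i$ itself.
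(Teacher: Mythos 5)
Your proof is correct and follows essentially the same route as the paper: the ``if'' direction computes $f$ of the completion via Lemma~\ref{lemma_gentle_A} and Corollary~\ref{coro_f_completion} and invokes Theorem~\ref{main_A}, while the ``only if'' direction reduces to Lemma~\ref{lemma_BV} by deleting one arrow from each triangle. The extra details you supply (nonnegativity of the second coordinate, isolatedness of the surviving relations, and recovery of the original quiver by completion) are points the paper leaves implicit or delegates to the proof of Corollary~\ref{prop_f_remove}, and your arguments for them are sound.
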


\begin{proof}
If $\Lambda$ is derived equivalent to a cluster tilted algebra of
type $\bbA$, then $\Lambda$ is of the form described in the
proposition due to Lemma~\ref{lemma_BV}. On the other hand, if
$\bDelta$ is a gentle quiver of tree type, $R_0 \subset R$
consists of isolated relations, and $\bDelta'$ is obtained from
$\bDelta$ by completing the relations from $R_0$, then
\[
f_{\bDelta'} = |R_0| \cdot [0, 3] + [|\Delta_0| + 1 - |R_0|,
|\Delta_0| - 1 - 2 |R_0|]
\]
according to Lemma~\ref{lemma_gentle_A} and
Corollary~\ref{coro_f_completion}.
\end{proof}

We may characterize cluster tilted algebras of type $\bbA$ among
the above class of algebras in the following way.

\begin{corollary} \label{coro_Gdim_A}
Let $\bDelta$ be a gentle quiver of tree type, $R_0 \subset R$ a
subset consisting of isolated relations, and $\Lambda$ the path
algebra of the quiver obtained from $\bDelta$ by completing the
relations from $R_0$. Then the following conditions are
equivalent:
\begin{enumerate}

\item
$\Lambda$ is cluster tilted.

\item
$\Gdim \Lambda \leq 1$.

\item
$R_0 = R$.

\end{enumerate}
\end{corollary}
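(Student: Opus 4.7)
I would prove the three implications. (3) $\Rightarrow$ (1) is immediate from Proposition \ref{proposition_cluster_A}: if $R_0 = R$, then every relation of $\bDelta$ is isolated and $\Lambda$ is the path algebra of the quiver obtained from $\bDelta$ by completing all of $R$. (1) $\Rightarrow$ (2) is a direct instance of Theorem \ref{theo_Keller_Reiten}.

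For (2) $\Rightarrow$ (3), I argue by contrapositive. Suppose $R_0 \subsetneq R$ and fix a relation $\rho \in R \setminus R_0$. The goal is to exhibit a maximal antipath of length at least $2$ in $\bDelta'$, since then Theorem \ref{theo_Geiss_Reiten} forces $\Gdim \Lambda \geq 2$. The key technical ingredient is to iterate Lemma \ref{lemma_completion}(2) along the relations of $R_0$, which yields $\calN_{\bDelta'} = \calN_{\bDelta} \setminus R_0$. Informally, each maximal antipath of $\bDelta$ either is one of the length-$2$ relations being completed and is destroyed, or is transported verbatim to a maximal antipath of $\bDelta'$.

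Now I split on whether $\rho$ is isolated in $\bDelta$. If so, then $\rho \in \calN_{\bDelta}$ and, since $\rho \notin R_0$, we have $\rho \in \calN_{\bDelta'}$ as a maximal antipath of length $2$. If $\rho$ is not isolated, then $\rho$ is a proper subpath of some maximal antipath $\omega \in \calN_{\bDelta}$ with $\ell(\omega) \geq 3$ (such an $\omega$ exists and is unique because $\bDelta$ is gentle of tree type, so the two-sided antipath extension of $\rho$ terminates). Since $\omega$ has length different from $2$, it cannot belong to $R_0$, hence $\omega \in \calN_{\bDelta} \setminus R_0 = \calN_{\bDelta'}$ provides a maximal antipath of length at least $3$. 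Either way, the desired long antipath exists and Theorem \ref{theo_Geiss_Reiten} contradicts (2).

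I do not foresee a serious obstacle. The only mild subtlety is verifying that the iterated application of Lemma \ref{lemma_completion} is legitimate, namely that each relation in $R_0$ remains isolated in the intermediate quiver and satisfies the orbit condition of Lemma \ref{lemma_completion}(1); both are ensured by the standing hypothesis that $\bDelta'$ is a gentle quiver, so that the sequential completion (in any admissible order) matches the simultaneous completion used in the statement.
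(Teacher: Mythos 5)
Your proposal is correct and follows essentially the same route as the paper: (1)$\implies$(2) via Theorem~\ref{theo_Keller_Reiten}, (3)$\implies$(1) via Proposition~\ref{proposition_cluster_A}, and (2)$\implies$(3) by combining Theorem~\ref{theo_Geiss_Reiten} with the identification $\calN_{\bDelta'} = \calN_{\bDelta} \setminus R_0$ from Lemma~\ref{lemma_completion} (using $\calC_{\bDelta} = \varnothing$ for tree type). You merely spell out in more detail the case analysis that the paper leaves implicit in its one-line appeal to Lemma~\ref{lemma_completion}.
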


\begin{proof}
(1)$\implies$(2) Follows from Theorem~\ref{theo_Keller_Reiten}.

(2)$\implies$(3) Let $\bDelta'$ be the quiver obtained from
$\bDelta$ by completing the relations from $R_0$. According to
Theorem~\ref{theo_Geiss_Reiten}, $\Gdim \Lambda \leq 1$ implies
that $\ell (\omega) \leq 1$ for each $\omega \in
\calN_{\bDelta'}$, hence $R_0 = R$ according to
Lemma~\ref{lemma_completion} (note that $\calC_{\bDelta} =
\varnothing$ since $\bDelta$ is of tree type).

(3)$\implies$(1) Follows from
Proposition~\ref{proposition_cluster_A}.
\end{proof}

Now we study $\tilde{\bbA}$ case. We start with the following.

\begin{proposition} \label{prop_9.5}
Let $\Lambda$ be an algebra. If $\Lambda$ is derived equivalent to
a cluster tilted algebra of type $\tilde{\bbA}$, then there exists
a 1-cycle gentle quiver $\bDelta$ and a subset $R_0 \subset R$
consisting of isolated relations, such that $\Lambda$ is
isomorphic to the path algebra of the quiver obtained from
$\bDelta$ by completing the relations from $R_0$.
\end{proposition}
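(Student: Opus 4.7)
The plan is to exhibit the desired 1-cycle gentle quiver as a ``model'' of the gentle quiver presenting $\Lambda$, and then to identify the completion procedure as the inverse of the triangle-erasure operation from Corollary~\ref{prop_f_remove}. First I would invoke~\cite{SZ} to assert that $\Lambda \simeq k \tilde{\bDelta}$ for some gentle quiver $\tilde{\bDelta}$, and then apply Corollary~\ref{corollary_cluster_A_tilde} to place $\tilde{\bDelta}$ in the class $\tilde{\calA}$ introduced in Section~\ref{section_B}.

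Next, I would enumerate the triangles $\calO_1, \ldots, \calO_m$ of $\tilde{\bDelta}$, choose an arrow $\alpha_i \in \calO_i$ for each $i$, write $\calO_i = \{\alpha_i, \beta_i, \gamma_i\}$ with $(\alpha_i, \beta_i), (\beta_i, \gamma_i), (\gamma_i, \alpha_i)$ all lying in the relation set of $\tilde{\bDelta}$, and set $\bDelta := \tilde{\bDelta} \setminus \{\alpha_1, \ldots, \alpha_m\}$. By Lemma~\ref{lemma_erase_A_tilde}, $\bDelta$ is a 1-cycle gentle quiver. The natural candidate for $R_0$ is the set $\{(\beta_i, \gamma_i) \mid i \in [1, m]\}$, which lies in the relation set $R$ of $\bDelta$ since none of the arrows $\beta_i, \gamma_i$ was erased.

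The remaining tasks are to confirm that each $(\beta_i, \gamma_i)$ is an isolated relation in $\bDelta$, and to identify $\tilde{\bDelta}$ with the quiver obtained from $\bDelta$ by completing $R_0$ (after matching the formal completion arrow $\gamma_{(\beta_i, \gamma_i)}$ with $\alpha_i$). Both points are essentially local: inside $\tilde{\bDelta}$, the third gentle condition forces $\alpha_i$ to be the unique arrow $\delta$ with $(\gamma_i, \delta) \in \tilde{R}$ and the unique arrow $\delta$ with $(\delta, \beta_i) \in \tilde{R}$; once $\alpha_i$ is deleted, no antipath extension of $(\beta_i, \gamma_i)$ remains inside $\bDelta$, so $(\beta_i, \gamma_i) \in \calN_{\bDelta}$. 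The second identification is exactly the correspondence that is implicitly used in the proof of Corollary~\ref{prop_f_remove}; hence I would quote that corollary together with Lemma~\ref{lemma_completion} rather than redo it.

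There is no serious obstacle: the proposition is a straightforward repackaging of Lemma~\ref{lemma_erase_A_tilde} together with the reversibility of the completion procedure of Section~\ref{section_completion}. The only genuine bookkeeping point is the identification of the formally introduced completion arrows $\gamma_{(\beta_i, \gamma_i)}$ with the erased triangle arrows $\alpha_i$, which is immediate upon comparing the source, target, and relation data assigned by the two operations.
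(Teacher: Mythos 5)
Your proposal is correct and takes essentially the same route as the paper, whose entire proof reads ``Follows immediately from Lemma~\ref{lemma_erase_A_tilde}.'' You have merely made explicit the steps the paper leaves implicit: gentleness of $\Lambda$ via~\cite{SZ}, membership of its quiver in $\tilde{\calA}$ via Corollary~\ref{corollary_cluster_A_tilde}, and the identification of triangle removal as the inverse of the completion procedure.
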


\begin{proof}
Follows immediately from Lemma~\ref{lemma_erase_A_tilde}.
\end{proof}

The converse implication is not true in general. Namely, if
$\bDelta$ is the following quiver with relations
\[
\vcenter{\xymatrix{& \bullet \ar[ld]^{}="a1"  & & \bullet \\
\bullet & & \bullet \ar[lu]^{}="a2"_{}="b1" \ar[ll] & \bullet
\ar[l]_{}="b2" \ar[u]_{}="c1" & \bullet \ar[l]_{}="c2"
\ar@{.}"a2";"a1" \ar@{.}"b2";"b1" \ar@{.}"c2";"c1"}},
\]
then $\bDelta$ is a 1-cycle gentle quiver (in particular, it is of
the above form for $R_0 = \varnothing$), but $f_{\bDelta} = [4, 5]
+ [2, 1]$, so $\bDelta$ is not derived equivalent to a cluster
tilted algebra. Note however that, if $\bDelta'$ is the following
quiver with relations
\[
\vcenter{\xymatrix{& \bullet \ar[ld]^{}="a1"  & & \bullet
\ar[rd]_{}="c3" \\ \bullet & & \bullet \ar[lu]^{}="a2"_{}="b1"
\ar[ll] & \bullet \ar[l]_{}="b2" \ar[u]_{}="c1" & \bullet
\ar[l]_{}="c2" \ar@{.}"a2";"a1" \ar@{.}"b2";"b1" \ar@{.}"c2";"c1"
\ar@{.}"c3";"c2" \ar@{.}"c1";"c3"}},
\]
then $f_{\bDelta'} = [0, 3] + [3, 3] + [2, 1]$, hence $\bDelta'$
is derived equivalent to a cluster tilted algebra. In general, in
order to obtain a converse of Proposition~\ref{prop_9.5}, one
would need to make assumptions on the relations: The number of the
completed clockwise (anticlockwise) relations, must be bigger than
the number of the anticlockwise (clockwise, respectively) cycle
relations. For this to make sense, one would need an appropriate
definition of orientation of branch relations.

Next, we obtain the following analogue of
Corollary~\ref{coro_Gdim_A} in the $\tilde{\bbA}$-case.

\begin{corollary} \label{coro_Gdim_Atilde}
Let $\bDelta$ be a 1-cycle gentle quiver, $R_0 \subset R$ a subset
consisting of isolated relations, and $\Lambda$ the path algebra
of the quiver obtained from $\bDelta$ by completing the relations
from $R_0$. If $\Lambda$ is derived equivalent to a cluster tilted
algebra of type $\tilde{\bbA}$, then the following conditions are
equivalent:
\begin{enumerate}

\item
$\Lambda$ is cluster tilted.

\item
$\Gdim \Lambda \leq 1$.

\item
$R_0 = R$.

\end{enumerate}
\end{corollary}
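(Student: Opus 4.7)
My plan is to mirror the proof of Corollary~\ref{coro_Gdim_A}, with minor adjustments for the $1$-cycle setting. The implication (1)$\implies$(2) is immediate from Theorem~\ref{theo_Keller_Reiten}. For (3)$\implies$(1) I would invoke condition~(3) of Proposition~\ref{proposition_cluster_A_tilde}: once $R_0=R$, the hypothesis on $R_0$ ensures that $\bDelta$ is a $1$-cycle gentle quiver whose relations are all isolated, so the path algebra of the quiver obtained by completing them is cluster tilted of type~$\tilde{\bbA}$.

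The substantial implication is (2)$\implies$(3). Let $\bDelta'$ be the quiver obtained from $\bDelta$ by completing the relations in $R_0$, so that $\Lambda\simeq k\bDelta'$. Because $\Lambda$ is derived equivalent to a cluster tilted algebra of type $\tilde{\bbA}$, it is itself gentle by~\cite{SZ}*{Corollary~1.2} together with Theorem~\ref{theorem_ABChJP}; hence $\bDelta'$ is a gentle quiver and Theorem~\ref{theo_Geiss_Reiten} translates $\Gdim\Lambda\leq 1$ into the statement that $\ell(\omega)\leq 1$ for every $\omega\in\calN_{\bDelta'}$. Iterating Lemma~\ref{lemma_completion}(2) over the relations in $R_0$ then yields the identification $\calN_{\bDelta'}=\calN_{\bDelta}\setminus R_0$.

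I would finish by contradiction. Suppose $R_0\subsetneq R$ and pick $\rho\in R\setminus R_0$. If $\rho$ is isolated, then $\rho\in\calN_{\bDelta}\setminus R_0=\calN_{\bDelta'}$ and $\ell(\rho)=2$, contradicting the length bound. Otherwise $\rho$ is a proper subpath of some longer maximal antipath $\omega\in\calN_{\bDelta}$; since each element of $R_0$ is itself a maximal antipath (being isolated), no element of $R_0$ is a subpath of $\omega$, so $\omega$ survives in $\calN_{\bDelta'}$ with $\ell(\omega)\geq 2$, again contradicting the bound. Hence $R_0=R$.

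The step requiring the most care is the iterated application of Lemma~\ref{lemma_completion}(2) giving $\calN_{\bDelta'}=\calN_{\bDelta}\setminus R_0$. The lemma demands $\bbZ\cdot\rho\neq\{\rho\}$ at each completion, and, unlike in the tree-type situation, this condition could in principle fail in a $1$-cycle setting where $\calC_{\bDelta}$ is nonempty. However, since $\Lambda$ is gentle by hypothesis the intermediate completions are forced to be gentle as well, so the lemma applies at every stage; the orbits in $\calC$ play no role in the length bound coming from Theorem~\ref{theo_Geiss_Reiten}, and the rest of the bookkeeping transfers directly from the tree-type argument.
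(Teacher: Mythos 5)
Your treatment of (1)$\implies$(2) and (3)$\implies$(1) matches the paper, and reducing (2)$\implies$(3) to the statement that every $\omega\in\calN_{\bDelta'}$ has $\ell(\omega)\leq 1$ together with the identification $\calN_{\bDelta'}=\calN_{\bDelta}\setminus R_0$ is also how the paper proceeds. The gap is in your final dichotomy: a relation $\rho=(\alpha,\beta)\in R\setminus R_0$ need not be either isolated or a proper subpath of a longer maximal antipath. There is a third case, $\alpha,\beta\in\calC_{\bDelta}$, which for a 1-cycle gentle quiver occurs exactly when the unique cycle is cyclically oriented with all compositions along it lying in $R$; such a $\rho$ is not a subpath of \emph{any} maximal antipath, leaves no trace in $\calN_{\bDelta}$, and your contradiction never materializes. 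This is not cosmetic: for an oriented $q$-cycle with all relations and $R_0=\varnothing$ one has $\Gdim\Lambda\leq 1$ (all maximal antipaths are trivial) while $R_0\neq R$. Since your argument for (2)$\implies$(3) never invokes the hypothesis that $\Lambda$ is derived equivalent to a cluster tilted algebra of type $\tilde{\bbA}$, it would prove a false statement; your closing remark that ``the orbits in $\calC$ play no role'' is precisely where the difficulty hides.

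The paper's proof spends its main effort closing exactly this hole: it first shows $\calC_{\bDelta}=\varnothing$. If $\calC_{\bDelta}\neq\varnothing$, then $f_{\bDelta}=[0,q]+[p+q,p]$ with $q\in\bbN_+$ by Proposition~\ref{prop_1_cycle}, so $f_{\Lambda}=m\cdot[0,3]+[0,q]+[p+q-m,p-2m]$ with $m=|R_0|$ by Corollary~\ref{coro_f_completion}, and one checks that this is never of the shape required by Theorem~\ref{main_B}, contradicting the standing hypothesis. Once $\calC_{\bDelta}=\varnothing$ is established, every arrow, and hence every relation, lies in a maximal antipath, your case analysis becomes exhaustive, and the rest of your argument goes through. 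You need to add this step.
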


\begin{proof}
(1)$\implies$(2) Follows from Theorem~\ref{theo_Keller_Reiten}.

(2)$\implies$(3) First we show that $\calC_{\bDelta} =
\varnothing$. Indeed, if $\calC_{\bDelta} \neq \varnothing$, then
$f_{\bDelta} = [0, q] + [p + q, p]$ for some $p \in \bbN$ and $q
\in \bbN_+$ according to Proposition~\ref{prop_1_cycle}.
Consequently,
\[
f_\Lambda = m \cdot [0, 3] + [0, q] + [p + q - m, p - 2 m],
\]
where $m := |R_0|$, by Corollary~\ref{coro_f_completion}. Thus
Theorem~\ref{main_B} implies that $\Lambda'$ is not equivalent to
a cluster tilted algebra of type $\tilde{\bbA}$, which contradicts
our assumptions. This finishes the proof of our claim. Now, $\Gdim
\Lambda \leq 1$ implies that $\ell (\omega) \leq 1$ for each
$\omega \in \calN_{\bDelta'}$ according to
Theorem~\ref{theo_Geiss_Reiten}, hence $R_0 = R$ according to
Lemma~\ref{lemma_completion}.

(3)$\implies$(1) Follows from
Proposition~\ref{proposition_cluster_A_tilde}.
\end{proof}

Finally, we note that Theorem~\ref{main_E} follows immediately
from Corollaries~\ref{coro_Gdim_A} and~\ref{coro_Gdim_Atilde}.

\bibsection

\begin{biblist}

\bib{ABChJP}{article}{
   author={Assem, I.},
   author={Br{\"u}stle, Th.},
   author={Charbonneau-Jodoin, G.},
   author={Plamondon, P.-G.},
   title={Gentle algebras arising from surface triangulations},
   journal={Algebra Number Theory},
   volume={4},
   date={2010},
   number={2},
   pages={201--229},
   issn={1937-0652},
}

\bib{ABSch}{article}{
   author={Assem, I.},
   author={Br{\"u}stle, T.},
   author={Schiffler, R.},
   title={Cluster-tilted algebras as trivial extensions},
   journal={Bull. Lond. Math. Soc.},
   volume={40},
   date={2008},
   number={1},
   pages={151--162},
   issn={0024-6093},
}

\bib{AH}{article}{
   author={Assem, I.},
   author={Happel, D.},
   title={Generalized tilted algebras of type $A_{n}$},
   journal={Comm. Algebra},
   volume={9},
   date={1981},
   number={20},
   pages={2101--2125},
   issn={0092-7872},
}

\bib{ASS}{book}{
   author={Assem, I.},
   author={Simson, D.},
   author={Skowro{\'n}ski, A.},
   title={Elements of the Representation Theory of Associative Algebras. Vol. 1},
   series={London Math. Soc. Stud. Texts },
   volume={65},
   publisher={Cambridge Univ. Press},
   place={Cambridge},
   date={2006},
   pages={x+458},
   isbn={978-0-521-58423-4},
   isbn={978-0-521-58631-3},
   isbn={0-521-58631-3},
}

\bib{AS}{article}{
   author={Assem, I.},
   author={Skowro{\'n}ski, A.},
   title={Iterated tilted algebras of type $\tilde{\bf A}_n$},
   journal={Math. Z.},
   volume={195},
   date={1987},
   number={2},
   pages={269--290},
   issn={0025-5874},
}

\bib{ARS}{book}{
   author={Auslander, M.},
   author={Reiten, I.},
   author={Smal{\o}, S. O.},
   title={Representation Theory of Artin Algebras},
   series={Cambridge Stud. Adv. Math.},
   volume={36},
   publisher={Cambridge Univ. Press},
   place={Cambridge},
   date={1997},
   pages={xiv+425},
   isbn={0-521-41134-3},
   isbn={0-521-59923-7},
}

\bib{AAG}{article}{
   author={Avella-Alaminos, D.},
   author={Geiss, Ch.},
   title={Combinatorial derived invariants for gentle algebras},
   journal={J. Pure Appl. Algebra},
   volume={212},
   date={2008},
   number={1},
   pages={228--243},
   issn={0022-4049},
}

\bib{Ba}{article}{
   author={Bastian, J.},
   title={Mutation classes of $\tilde{\bbA}_n$-quivers and derived equivalence classification of cluster tilted algebras of type $\tilde{\bbA}_n$},
   eprint={arXiv:0901.1515},
}

\bib{BM}{article}{
   author={Bobi{\'n}ski, G.},
   author={Malicki, P.},
   title={On derived equivalence classification of gentle two-cycle algebras},
   journal={Colloq. Math.},
   volume={112},
   date={2008},
   number={1},
   pages={33--72},
   issn={0010-1354},
}

\bib{BB}{article}{
   author={Brenner, Sh.},
   author={Butler, M. C. R.},
   title={Generalizations of the Bernstein-Gel\cprime fand-Ponomarev reflection functors},
   booktitle={Representation theory, II},
   editor={Dlab, V.},
   editor={Gabriel, P.},
   series={Lecture Notes in Math.},
   volume={832},
   publisher={Springer},
   place={Berlin},
   date={1980},
   pages={103--169},
}

\bib{BMRRT}{article}{
   author={Buan, A. B.},
   author={Marsh, R.},
   author={Reineke, M.},
   author={Reiten, I.},
   author={Todorov, G.},
   title={Tilting theory and cluster combinatorics},
   journal={Adv. Math.},
   volume={204},
   date={2006},
   number={2},
   pages={572--618},
   issn={0001-8708},
}

\bib{BMR}{article}{
   author={Buan, A. B.},
   author={Marsh, R. J.},
   author={Reiten, I.},
   title={Cluster-tilted algebras},
   journal={Trans. Amer. Math. Soc.},
   volume={359},
   date={2007},
   number={1},
   pages={323--332},
   issn={0002-9947},
}

\bib{BV}{article}{
   author={Buan, A. B.},
   author={Vatne, D.},
   title={Derived equivalence classification for cluster-tilted algebras of type $A_n$},
   journal={J. Algebra},
   volume={319},
   date={2008},
   number={7},
   pages={2723--2738},
   issn={0021-8693},
}

\bib{GR}{article}{
   author={Gei{\ss}, Ch.},
   author={Reiten, I.},
   title={Gentle algebras are Gorenstein},
   booktitle={Representations of Algebras and Related Topics},
   editor={Buchweitz, R.-O.},
   editor={Lenzing, H.},
   series={Fields Inst. Commun.},
   volume={45},
   publisher={Amer. Math. Soc.},
   place={Providence, RI},
   date={2005},
   pages={129--133},
}

\bib{H}{book}{
   author={Happel, D.},
   title={Triangulated Categories in the Representation Theory of Finite-dimensional Algebras},
   series={London Math. Soc. Lecture Note Ser.},
   volume={119},
   publisher={Cambridge Univ. Press},
   place={Cambridge},
   date={1988},
   pages={x+208},
   isbn={0-521-33922-7},
}

\bib{Hap}{article}{
   author={Happel, D.},
   title={On Gorenstein algebras},
   booktitle={Representation Theory of Finite Groups and Finite-dimensional Algebras},
   editor={Michler, G. O.},
   editor={Ringel, C. M.},
   series={Progr. Math.},
   volume={95},
   publisher={Birkh\"auser},
   place={Basel},
   date={1991},
   review={\MR{1112170 (92k:16022)}},
}

\bib{HR}{article}{
   author={Happel, D.},
   author={Ringel, C. M.},
   title={Tilted algebras},
   journal={Trans. Amer. Math. Soc.},
   volume={274},
   date={1982},
   number={2},
   pages={399--443},
   issn={0002-9947},
}

\bib{K}{article}{
   author={Keller, B.},
   title={On triangulated orbit categories},
   journal={Doc. Math.},
   volume={10},
   date={2005},
   pages={551--581},
   issn={1431-0635},
}

\bib{KR}{article}{
   author={Keller, B.},
   author={Reiten, I.},
   title={Cluster-tilted algebras are Gorenstein and stably Calabi-Yau},
   journal={Adv. Math.},
   volume={211},
   date={2007},
   number={1},
   pages={123--151},
   issn={0001-8708},
}

\bib{Ric}{article}{
   author={Rickard, J.},
   title={Morita theory for derived categories},
   journal={J. London Math. Soc. (2)},
   volume={39},
   date={1989},
   number={3},
   pages={436--456},
   issn={0024-6107},
}

\bib{SZ}{article}{
   author={Schr{\"o}er, J.},
   author={Zimmermann, A.},
   title={Stable endomorphism algebras of modules over special biserial algebras},
   journal={Math. Z.},
   volume={244},
   date={2003},
   number={3},
   pages={515--530},
   issn={0025-5874},
}

\end{biblist}

\end{document}